\newtheorem{theorem}{Theorem}
\newtheorem{lemma}{Lemma}
\newtheorem{example}{Example}
\theoremstyle{definition}
\newtheorem{remark}{Remark}
\newtheorem{proposition}{Proposition}
\newcolumntype{d}{D{.}{.}{-1}}
\newcolumntype{Y}{>{\small\center\arraybackslash}X}
\newcommand{\beq}{\begin{equation}}
\newcommand{\eeq}{\end{equation}}
\newcommand{\be}{\begin{eqnarray}}
\newcommand{\ee}{\end{eqnarray}}
\newcommand{\bex}{\begin{eqnarray*}}
\newcommand{\eex}{\end{eqnarray*}}
\newcommand{\ba}{\begin{array}}
\newcommand{\ea}{\end{array}}
\font\tenbi=cmmib10   at 11 pt
\font\sevenbi=cmmib10 at 9pt
\font\fivebi=cmmib7 at 6pt
\def\bq{\begin{equation}}
\def\eq{\end{equation}}
\def\br{\begin{eqnarray}}
\def\er{\end{eqnarray}}
\def\brr{\bq\begin{array}{rlll}}
\def\err{\end{array}\eq}
\def\OH{\accentset{\circ}H}
 \def\i{\operatorname{i}}
 \def\e{\operatorname{e}}
 \def\RR{\mathbb{R}}
 \def\NN{\mathbb{N}}
\begin{document}

\title[Spectral method for  Riesz fractional eigenvalue problems]{Jacobi-Galerkin spectral method for  eigenvalue problems  of Riesz fractional differential equations}

\author[
	L. Chen,\; Z. Mao,\; $\&$\; H. Li
	]{
		\;\; Lizhen Chen${}^1$,    \;\; Zhiping Mao${}^{2}$ \;\;  and  \;\; Huiyuan Li${}^{3}$ }
		
	\thanks{${}^1$Beijing Computational Sciences and Research Center, Beijing 100193, China. Email: lzchen@csrc.ac.cn. The research of this author is partially supported by the National Natural Science Foundation
of China (NSFC, 11671166),  the Postdoctoral Science Foundation
of China  (2015M580038) and the Joint Fund of the National Natural Science Foundation of China and the China Academy of Engineering Physics (NSAF  U1530401).\\
		\indent ${}^{2}$Division of Applied Mathematics, Brown University, 182 George St., Providence RI 02912, USA. Email: zhiping\_mao@brown.edu. The research of this author is partially supported by MURI/ARO (W911NF-15-1-0562) on ``Fractional PDEs for Conservation Laws and Beyond: Theory, Numerics and Applications".\\
		\indent ${}^{3}$State Key Laboratory of Computer Science/Laboratory of Parallel Computing,  Institute of Software, Chinese Academy of Sciences, Beijing 100190, China.  Email: huiyuan@iscas.ac.cn.  The research of this author is partially   supported by the National Natural Science Foundation of China (NSFC 11471312, 91430216 and 91130014).
}

%

\date{}

\keywords{
Riesz fractional differential,
eigenvalue problem,
Jacobi-Galerkin spectral method,
exponential order,
numerical analysis}

\subjclass[2010]{35R11, 65N25, 65N35, 74S25}

\begin{abstract}
An efficient Jacobi-Galerkin spectral method for  calculating eigenvalues of Riesz fractional partial differential equations with homogeneous Dirichlet boundary values is proposed in this paper. In order to retain the symmetry and positive definiteness  of the discrete linear system,  we introduce some properly defined Sobolev spaces and  approximate  the eigenvalue problem in a standard
 Galerkin weak formulation instead of the
 Petrov-Galerkin one as in literature. Poincar\'{e} and inverse inequalities are proved for the proposed Galerkin formulation which finally help us  establishing  a sharp estimate on the algebraic system's condition number.  Rigorous error estimates of the eigenvalues and eigenvectors are then readily obtained by using  Babu\v{s}ka and Osborn's  approximation theory on self-adjoint and positive-definite  eigenvalue problems. Numerical results are presented to demonstrate the accuracy and efficiency,
and to validate the asymptotically exponential oder of convergence. Moreover, 
the Weyl-type asymptotic law $ \lambda_n=\mathcal{O}(n^{2\alpha})$
for  the $n$-th eigenvalue $\lambda_n$ of the Riesz fractional differential operator  of order $2\alpha$, and 
the condition number $N^{4\alpha}$ of its algebraic system  with respect to the polynomial degree $N$ are observed.


\end{abstract}

\maketitle

\section{Introduction}
\label{sec:intro}

Fractional  differential equations (FDEs) have been widely used in modeling of many nonlocal phenomena
arising from science and engineering, such as viscoelasticity,
electromagnetism and so on (see \cite{KST06,MR93,P99,BD03,BD96,E98} and the references therein),
among which the Riesz fractional  differential equations are of common interests
of mathematicians and physicists. It is widely  assumed that Riesz fractional derivatives are  equivalent to  fractional Laplacians   \cite{CS07}.
Riesz FDEs have been numerically studied extensively,  including, finite difference method \cite{DD12},
 a fourth order compact alternating direction implicit (ADI) scheme \cite{ZSH14}, finite element methods \cite{BTY14,ZXX13,EHR07,R06} and spectral methods \cite{ZLLBTA14,MCS16}.
 In particular, Mao et al. have developed recently a novel spectral Petrov-Galerkin  method to solve the boundary value problem of  Riesz fractional differential equations and established the error estimate in non-uniformly weighted Sobolev spaces \cite{MCS16}. 

 The eigenvalue problem for the Riesz fractional differential euqations
 is  challenging and  has  attracted lots of attention
(see \cite{K12,BP14,PRS16,WZ13,ZLWW14,WZ12,SZHL13,DWLQ13,NP14,LQ15,ZRK07}).  Most of the existing studies  focus on the
theoretical research.
Let $\Lambda=(-1,1)$ and $2\alpha \in (0,2)$.
Kwa\'snicki \cite{K12} introduced the Weyl-type asymptotic law for the eigenvalues of the one-dimensional fractional Laplace operator
$(-\Delta)^{\alpha}$  on the interval $\Lambda$ with the zero exterior boundary conditions: the $n$-th eigenvalue is equal to $(n\pi/2 -
(2 - 2\alpha)\pi/8)^{2\alpha} + \mathcal{O}(1/n)$. DeBlassie \cite{D04} and Chen
et al. \cite{CS05} have derived the estimate for $\lambda_n$ is $\displaystyle \frac{1}{2}(\frac{n\pi}{2})^{2\alpha} \leq \lambda_n \leq(\frac{n\pi}{2})^{2\alpha}$.
Meanwhile, owing to the non-locality of Reisz fractional derivatives, it is usually impossible to  obtain analytically  a closed expression  for the  eigenfunctions,  and  it is also
hard to  precisely specify the  (asymptotic)  behavior  of an eigenfunction $\psi_k(x)$ around the endpoints $x=\pm1$.  This motivates   researchers to carry out numerical studies on  the Riesz  differential  eigenvalue problem.  Zoia et al. \cite{ZRK07} have provided a discretized version of the Riesz differential operator  of order $2\alpha$. The eigenvalues and eigenfunctions in a bounded domain were then gained numerically for different boundary conditions. However,  the eigenfunctions of a  Riesz fractional differential operator have only  a limited regularity measured in usual Sobolev space, and  eigensolutions  obtained
by ordinary numerical methods 
have a very poor  accuracy.
Based on these numerical  eigensolutions,
analytical results  concerning the spectrum structure   may not be reliable or even  be   erroneous  in some cases \cite{ZRK07}.
Indeed, Borthagaray et al. \cite{Borthagaray} have shown that the first
eigenfunction $\psi_1$ belongs to $H^{\alpha+1/2-\varepsilon}(\Lambda)$ for any $\varepsilon>0$, and the conforming finite element
method exhibits a convergence rate of order  $1-\varepsilon$.
Hence, one may ask for  high order methods to conquer this difficulty.

Fortunately, evidences showed that in general for  the eigenfunctions  scale as $(1-x^2)^{\alpha}$ as $x\to \pm 1$  \cite{Buldyrev,Zumofen} , and  the first eigenfunction  $\psi_1(x)\sim \frac{\Gamma(3/2+2\alpha)}{\sqrt{\pi} \Gamma(1+2\alpha)} (1-x^2)^{\alpha} $ as $2\alpha\to +\infty$ \cite{Bottcher}.
 Using
the Jacobi  basis functions $\{(1-x^2)^{\alpha} J^{\alpha,\alpha}_n(x)\}_{n=0}^N$
to mimic the singular behavior of the eigenfunctions,   one may  naturally  expect
a spectrally   high order  of convergence rate  of
the  specifically  devised  spectral method   for the eigenvalue problems.

The main purpose of this paper is thus  to propose an efficient Jacobi-Galerkin spectral method for solving
Riesz fractional differential eigenvalue problems, and then to conduct a comprehensive numerical
analysis.

We start with a brief review  over some definitions of  Riesz fractional derivatives on the interval $\Lambda$.  Some Sobolev spaces are then introduced  to fit the domain of Riesz fractional derivatives, in which the Riesz fractional derivatives are proved to be self-adjoint and positive definite.
Thus, the Riesz fractional differential eigenvalue problems can be equivalently  written  in a symmetric
weak formulation.
This recognition eventually helps us  to propose a Jacobi-Galerkin spectral method,  instead of the Petrov-Galerkin one, for  solving
Riesz fractional differential eigenvalue problems. Moreover,
 by adopting the  generalized Jacobi functions as the basis functions, an efficient  implementation
  is given for  this Jacobi-Galerkin spectral method. Indeed, the stiffness matrix is identity owing to  orthonormality of the basis function with  respect to the inner product induced by Riesz fractional operators; while all entries of the mass matrix can be explicitly
 evaluated via their analytical formula.

 The symmetric variational  formulation also play an important role  in the numerical analysis.   On the one hand, the Poincar\'{e}   inequality and the inverse inequality  for Riesz fractional derivatives  are derived,  through  which the lower and  the upper bounds of  all eigenvalues are established, respectively. Moreover, an elaborative analysis shows that the smallest  numerical  eigenvalue  behaves as $\mathcal{O}(1)$  while the largest one
 of the Riesz fractional  differential operator  of order $2\alpha$   behaves as $\mathcal{O}(N^{4\alpha})$. This indicates that  the condition number of the mass matrix
increases  at a rate of $\mathcal{O}(N^{4\alpha})$.  On the other hand,
following the approximation theory of  Babu\v{s}ka and Osborn on the Ritz method for self-adjoint and positive-definite  eigenvalue problems,
 rigorous error estimates
for both the eigenvalues and eigenfunctions are presented.

Numerical experiments demonstrate that our Jacobi-Galerkin method
has a higher accuracy than all existing methods ever known.
 Indeed,
asymptotically exponential (sub-geometric) orders of convergence  are observed
for large $N$ in the  error plots in  both semi-logarithm and
logarithm-logarithm scales. This hypothesis  is also convinced by
 a reliability analysis of all computational eigenvalues
together with a continuity analysis of the first eigenvalue upon the fractional order $2\alpha$. Besides,
the order of $\mathcal{O}(N^{4\alpha})$ for the condition number of the reduced linear algebraic eigen-system
(also of the mass matrix) is also confirmed numerically.

The remainder of this paper is organized as follows.  We describe some notations and preliminary results
about the fractional derivatives operators in Section \ref{sec:problem}.
In Section \ref{sec:imp}, we propose an efficient  Jacobi-Galerkin spectral method
 for the Riesz fractional differential eigenvalue
problem with homogeneous Dirichlet boundary conditions. The detail of the numerical implementation is also described. Furthermore,  the rigorous error estimate of the valid eigenvalues and eigenvectors is derived by the approximation theory of Babu\v{s}ka and Osborn on
the Ritz method for self-adjoint and positive-definite  eigenvalue problems.
 Some numerical results  which verify  the  efficiency and accuracy of the Jacobi-Galerkin spectral method are provided
in Section \ref{sec:results}.
Finally,  a conclusion remark  is made  in the last section.

\section{Preliminaries} \label{sec:problem}

 Throughout this paper, we shall use the notations $a\lesssim b$ for $a\le C b$, $a\gtrsim b$ for $a\ge c b$, and
$a \asymp b$ for $c  b \le  a \le C b$  with some generic positive constants $c$ and $C$ which are independent of
any function and of any discretization parameters.
Next denote by
$\mathbb{N}$ (resp. $\mathbb{R}$)  the set of positive integers (resp. real numbers). Further denote $\mathbb{N}_0:=\{0\}\cup \mathbb{N}$.

Let
$\omega$ be a generic positive
weight function  which is not necessary in $
L^1(I)$ ($I\subseteq \RR$). Denote by $(u,v)_{\omega,I}:=\int_{I} u(x) \overline{v(x)}
\omega(x) d x $ the inner product of $L^2_\omega(I)$ with the norm
$\|\cdot\|_{\omega,I}$.  Whenever $I=\Lambda:=(-1,1)$, the subscript $I$ will be omitted.
\subsection{Fractional integrals and derivatives} \label{sec:notation}

Furthermore, we recall the definitions of the fractional integrals and derivatives in the sense of
Riemann-Liouville.

$\mathbf{Definition~1}$ (Fractional integrals and derivatives). For $\rho >0$, the
left and right fractional integral are  defined respectively as
\begin{align*}
{}_{-1}I^{\rho}_x v(x):&= \displaystyle \frac{1}{\Gamma(\rho)} \int \limits_{-1}^x \frac{v(t)}{(x-t)^{1-\rho}}dt, \quad x>-1, \\
{}_{x}I^{\rho}_1 v(x):&= \displaystyle \frac{1}{\Gamma(\rho)} \int \limits^{1}_x \frac{v(t)}{(t-x)^{1-\rho}}dt, \quad  x<1.
\end{align*}
For real $s \in [k-1,k)$ with $k\in \mathbb{N}$, the left-side and the right-side Riemann-Liouville fractional
derivative (LRLFD and RRLFD) of order $s$ are defined respectively by
\begin{align*}
{}_{-1}D^{s}_x v(x):&= \displaystyle \frac{1}{\Gamma(k-s)} \frac{d^k}{dx^k}\int \limits_{-1}^x \frac{v(t)}{(x-t)^{s-k+1}}dt, x \in \Lambda, \\
{}_{x}D^{s}_1 v(x):&= \displaystyle \frac{(-1)^k}{\Gamma(k-s)}\frac{d^k}{dx^k} \int \limits^{1}_x \frac{v(t)}{(t-x)^{s-k+1}}dt, x\in \Lambda.
\end{align*}

$\mathbf{Definition~2}$ (Fractional Riesz integral and derivatives). For $\rho >0$, the
Riesz fractional integrals are defined as
\begin{align*}
I_1^{\rho} v(x):&= \displaystyle \frac{1}{2\Gamma(\rho)\sin\frac{\pi \rho } 2} \int \limits_{-1}^1 \frac{ \mbox{sgn}(x-t)}{|x-t|^{1-\rho}}v(t)dt
=\frac{1}{2\sin\frac{\pi \rho } 2}({}_{-1}I^{\rho}_x-{}_{x}I^{\rho}_1 )v(x), &&x\in \Lambda,\, \frac{\rho}{2}\not \in \mathbb{N}, \\
I_2^{\rho} v(x):&= \displaystyle \frac{1}{2\Gamma(\rho)\cos\frac{\pi \rho } 2} \int \limits_{-1}^1 \frac{ v(t)}{|x-t|^{1-\rho}}dt
=\frac{1}{2\cos\frac{\pi \rho } 2}({}_{-1}I^{\rho}_x+{}_{x}I^{\rho}_1 )v(x), &&x\in \Lambda,\,
 \frac{\rho+1}{2}\not \in \mathbb{N},
\end{align*}
where sgn  is the sign function. 

For $s\in [k-1,k)$ with $k\in \NN$,
 we define the Riesz fractional derivative (RFD)
of order $s$:
\begin{equation}
\label{Def-RFD}
D^{s}v(x):= \frac{ (-1)^{\lfloor \frac{s+1}{2} \rfloor} }{ 2\cos \frac{\pi s}{2}} \big[ {}_{-1}D_x^s + {}_xD_1^s \big] v(x)
=
              \begin{cases}
                \dfrac{d^{k}}{dx^{k}}I_1^{k-s} v(x), & k \text{ odd}, \\[0.5em]
                \dfrac{d^{k}}{dx^{k}}I_2^{k-s}v(x), & k  \text{ even},
              \end{cases}
            \qquad x\in \Lambda,
\end{equation}
hereafter $\lfloor a \rfloor$ is the greatest integer not exceeding  the real number $a$.
Whenever $s$ is an  odd integer, the definition of $D^sv$
is  then treated in the limit sense.

Let us
define the Fourier transform of a function $u\in L^2(\RR)$,
\begin{align*}
    \widehat v(\xi) = [\mathscr{F}v](\xi)=\int_{\RR} v(x) \e^{-\i\xi x} dx, \quad \xi\in \RR.
\end{align*}
Further let $\tilde{v}$ be the zero extension of $v$,
\begin{align}
\label{Zext}
    \tilde{v}\big|_{\Lambda} = v \quad \text{ and } \quad
\tilde{v}\big|_{\RR\setminus \Lambda} = 0.
\end{align}
Then one finds the  following alternative definition of the Riesz fractional integral and derivative \cite{SKM93},
\begin{align}
&[\mathscr{F}(I_1^{\rho}v)](\xi) = -\i \mathrm{sgn}(\xi)\, |\xi|^{-\rho}\, [\mathscr{F}\tilde v](\xi),
\qquad\qquad  [\mathscr{F}(I_2^{\rho}v)](\xi) = |\xi|^{-\rho}\, [\mathscr{F}\tilde v](\xi),
\notag
\\
\label{F-RFD}
&[\mathscr{F}(D^sv)](\xi) =(-1)^{\lfloor \frac{s+1}{2} \rfloor}  |\xi|^{s}[\mathscr{F}\tilde v](\xi)= \begin{cases}
(-1)^\frac{k}2 |\xi|^{s}[\mathscr{F}\tilde v](\xi), & k \text{ even},
\\
  (-1)^\frac{k-1}2 |\xi|^{s}[\mathscr{F}\tilde v](\xi), & k \text{ odd},
  \end{cases}
  \qquad v\in\mathscr{D}(\Lambda),
\end{align}
which implies that, for any $n\in \NN_0$, $D^{2n}=(-1)^n\dfrac{d^{2n}}{dx^{2n}}$,  while
$D^{2n+1}$ is totally distinct from   $\dfrac{d^{2n+1}}{dx^{2n+1}}$.

\begin{remark}
We would like  to emphasize   the equivalence of  Riesz fractional derivative and  the fractional Laplacian.
Indeed, among several ways to define the fractional Laplacian $(-\Delta)^{s/2} $ on the (bounded) interval $\Lambda$,   the one using the zero extension and the pseudo-differential operator of symbol $|\xi|^{s}$ is of our greatest interest \cite{ACOSTA17},
\begin{align}
\label{F-FL}
(-\Delta)^{s/2} v(x) = [\mathscr{F}^{-1}( |\xi|^{s} \mathscr{F}\tilde v )](x),
\end{align}
which reveals that the Riesz fractional derivative \eqref{Def-RFD}  on $\Lambda$ coincides, up to  a constant  $\pm 1$,   with the fractional Laplacian \eqref{F-FL}.
\end{remark}

\subsection{Sobolev spaces }

It is well known that $H^s(\RR)$ can  be defined through the Fourier transform \cite{Lions72, Triebel, Grisvard},
\begin{align}
\begin{split}
  &H^s(\RR)=\big\{v\in \mathscr{D}'(\RR):   \|v\|_{s,\RR}<\infty \big\},
\\
& |v|_{s,\RR} =  \Big( \frac{1}{2\pi}\int_{\RR}\  {|\xi|}^{2s}
\big|\widehat{ v}({\xi})\big|^2 d{\xi}\Big)^{\frac12},
 \qquad\qquad   \|v\|_{s,\RR} =  \Big( \frac{1}{2\pi}\int_{\RR}\big(1+ |\xi|^{2}\big)^s
\big|\widehat{ v}({\xi})\big|^2 d{\xi}\Big)^{\frac12};
\end{split}
\end{align}
and   $H^s(\Lambda)$  ($s\ge 0$)  can be derived
from $H^s(\RR)$ by extension \cite{Grisvard,Lions72, Triebel},
\begin{align}
  H^s(\Lambda)= \big\{v = u|_{\Lambda}: u\in H^s(\RR) \big\}, &&
  \|v\|_{s} = \inf_{u\in H^s(\RR)  \atop u|_{\Lambda}=v }\|u\|_{s,\RR}.
  \label{extension}
\end{align}
The completion of $\mathscr{D}(\Lambda)$ in  $H^s(\Lambda)$ is denoted by  $H^s_0( \Lambda)$.

The zero extension $\tilde v$ of $v$ as defined in \eqref{Zext}
is of  particular interest  \cite{Grisvard,Lions72}.
By Theorem 11.4 of \cite{Lions72},  $v\mapsto \tilde v$ is a continuous mapping of $H^s(\Lambda)\mapsto H^s(\RR)$ if and only if $0\le s<\frac12$,
is  a  continuous mapping of $H_0^s(\Lambda)\mapsto H^s(\RR)$ for $s>\frac12$ if and only if $s+\frac12\not\in \NN$.

The zero extension  induces  a special type of  Sobolev spaces,
\begin{align*}
    H^s_*(\Lambda) = \left\{v \in \mathscr{D}'(\Lambda): \tilde v \in H^s(\RR)  \right\},
    \qquad \|v\|_{s,*} = \|\tilde v\|_{s,\RR}, \   |v|_{s,*}  = |\tilde v|_{s,\RR}.
\end{align*}
Actually,  $\mathscr{D}(\Lambda)$ is  dense  in $H^s_*(\Lambda)$ for  $s\ge 0$ \cite[Theorem 3.2.4/1]{Triebel}, thus  the completion of $\mathscr{D}(\Lambda)$  in $H^s_{*}(\Lambda)$ is $H^s_{*}(\Lambda)$ itself,
i.e., space $H^s_{*0}(\Lambda)=H^s_{*}(\Lambda)$.
More precisely,
  it holds that
\begin{align*}
 H^s_{*0}(\Lambda) =H^s_*(\Lambda) =\begin{cases} H^{s}(\Lambda)=H^{s}_0(\Lambda)
, & \text{ if } 0< s<\frac12,\\
     H^{s}_{00}(\Lambda)\subset H^{s}_{0}(\Lambda), & \text{ if } s+\frac12 \in \NN,\\
     H^{s}_0(\Lambda) \subset H^{s}(\Lambda), & \text{ if } s>\frac12,\ s+\frac12\not \in \NN,
\end{cases}\end{align*}
where the interpolation space $H^{m+\frac12}_{00}(\Lambda) = [H^{m}_{0}(\Lambda),H^{m+1}_{0}(\Lambda)]_{\frac12}$.

Indeed, according to \cite{Lions72}, $(-1)^m D^{2m}$ ($m\in \NN$) is self-adjoint, positive and unbounded in $H_0^{2m}(\Lambda)$, which is dense in $H^0(\Lambda)=L^2(\Lambda)$ with continuous injection.  Moreover, $\|v\|_{2m}^2 \asymp \|v\|^2+ \|D^{2m}v\|^2 \asymp  \|D^{2m}v\|^2 \asymp |v|_{2m}^2$ for all $v\in H^{2m}_0(\Lambda)$.
Thus by the space interpolation theory,
\begin{align}
H^{s}_*(\Lambda)=[H^0(\Lambda), H^{2m}_0(\Lambda)]_{\frac{s}{2m}}
=\left\{v\in  \mathscr{D}'(\Lambda):   \|v\|^2 + \|D^s v \|^2 <\infty\right\}.
\end{align}

For any $s\in [k-\frac12,k+\frac12)$ with the positive integer $k$, one readily  obtains from the adjoint properties of the
Riemann-Liouville fractional differential operators that \cite{ER06,LiXu09,MS16}
\begin{align}
\label{selfadjoint}
(-1)^k(D^{2s}u,v) = (D^su, D^sv) = (-1)^k(u, D^{2s}v),\qquad  u,v\in H_*^{s}(\Lambda).
\end{align}

Moreoever, by \eqref{F-RFD} together with Parserval's theorem,
\begin{align}
\label{SemiNorm}
 |v|_{s,*}  
 = \|D^sv\|:=|v|_s.
\end{align}
Indeed, by  Lemma 1.3.2.6 in \cite{Grisvard},
it holds  for $0\le s< 1$ that
\begin{align}
\label{normH00}
 \| v\|_{s,*}^2 \asymp  | v|_{s,*}^2
\asymp     |v|_{s}^2 + \|v\|_{\omega^{-2s,-2s}}^2,\qquad v\in H_*^s(\Lambda),
\end{align}
where  $\omega^{\alpha,\alpha}=\omega^{\alpha,\alpha}(x)=(1-x^2)^{\alpha}$.

We now conclude this subsection with the following embedding theorem.
\begin{theorem}
\label{embedding}
$H^s_*(\Lambda)$ with $s>0$ is compactly embedded in $L^2(\Lambda)$, i.e., $H^s_*(\Lambda)\hookrightarrow L^2(\Lambda)$.
\end{theorem}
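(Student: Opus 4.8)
The plan is to realize $H^s_*(\Lambda)$ as an intermediate space between $L^2(\Lambda)$ and $H^{2m}_0(\Lambda)$ for a suitable integer $m$, and to leverage the compactness of the embedding $H^1(\Lambda)\hookrightarrow L^2(\Lambda)$ (Rellich--Kondrachov) together with an interpolation argument. First I would pick an integer $m$ with $2m\ge s$, so that by the interpolation identity $H^s_*(\Lambda)=[H^0(\Lambda),H^{2m}_0(\Lambda)]_{s/2m}$ established above, every $v\in H^s_*(\Lambda)$ satisfies $\|v\|\le C\|v\|^{1-s/2m}\|D^{2m}v\|^{s/2m}$ by the standard interpolation inequality. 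The key observation is that this gives, for any sequence $\{v_n\}$ bounded in $H^s_*(\Lambda)$, control of a higher-order norm only in an averaged sense, so I must instead work with the zero-extension characterization.

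The cleaner route, which I would actually carry out, uses the definition $H^s_*(\Lambda)=\{v:\tilde v\in H^s(\RR)\}$ with $\|v\|_{s,*}=\|\tilde v\|_{s,\RR}$. Let $\{v_n\}$ be a bounded sequence in $H^s_*(\Lambda)$; then $\{\tilde v_n\}$ is bounded in $H^s(\RR)$, and all $\tilde v_n$ are supported in the fixed compact set $\overline{\Lambda}=[-1,1]$. I would invoke the classical fact that $H^s(\RR)$ restricted to functions supported in a fixed compact set embeds compactly into $L^2(\RR)$: this is standard (e.g. via the Fourier characterization, splitting $\int_{\RR}|\xi|\le R$ and $|\xi|>R$, using that on the low-frequency part the Fourier transforms $\widehat{\tilde v_n}$ are uniformly Lipschitz — because $x\tilde v_n$ is bounded in $L^2$ thanks to compact support — and applying Arzel\`a--Ascoli on $\{|\xi|\le R\}$, while the high-frequency tail is $O(R^{-2s})$ uniformly). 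Hence a subsequence of $\tilde v_n$ converges in $L^2(\RR)$; restricting to $\Lambda$, the corresponding subsequence of $v_n$ converges in $L^2(\Lambda)$. Together with the continuous injection $H^s_*(\Lambda)\hookrightarrow L^2(\Lambda)$ (immediate from $\|v\|=\|\tilde v\|_{0,\RR}\le\|\tilde v\|_{s,\RR}=\|v\|_{s,*}$ for $s\ge0$), this proves the asserted compact embedding.

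The main obstacle is making the compactness of $\{u\in H^s(\RR):\operatorname{supp}u\subset[-1,1]\}\hookrightarrow L^2(\RR)$ rigorous for non-integer and small $s$ (in particular $0<s<1/2$), where one cannot simply cite Rellich for $H^1$. I expect to handle this by the frequency-splitting argument sketched above, or alternatively by noting that for such $v$ the zero extension lies in $H^s(\RR)$ with compact support, mollifying and using the uniform decay of the Fourier transform; either way the estimate $\|\widehat{\tilde v}(\xi)-\widehat{\tilde v}(\eta)\|\lesssim|\xi-\eta|\,\|\tilde v\|$ valid for compactly supported $\tilde v$ is what rescues the low-frequency equicontinuity. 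Once that lemma is in place the theorem follows in a few lines.
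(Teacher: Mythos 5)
Your argument is correct, but it takes a more self-contained route than the paper. The paper's proof is a two-line reduction: it notes that $H^s_*(\Lambda)\subseteq H^s(\Lambda)$ with continuous injection (since $\|v\|_{s}=\inf_{u|_\Lambda=v}\|u\|_{s,\RR}\le\|\tilde v\|_{s,\RR}=\|v\|_{s,*}$) and then simply cites the classical compact embedding $H^s(\Lambda)\hookrightarrow\hookrightarrow L^2(\Lambda)$ for the bounded interval. You instead re-prove that compactness from scratch for the relevant subspace: working with the zero extensions $\tilde v_n$, which are uniformly supported in $[-1,1]$, you get uniform boundedness and uniform Lipschitz continuity of $\widehat{\tilde v_n}$ (via $\|x\tilde v_n\|_{L^1}\lesssim\|\tilde v_n\|_{L^2}$), apply Arzel\`a--Ascoli on $\{|\xi|\le R\}$, and control the tail by $R^{-2s}\sup_n|v_n|_{s,*}^2$; a diagonal argument then yields $L^2(\RR)$-convergence of a subsequence. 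This is a complete and valid proof of the fractional Rellich lemma for compactly supported functions, and it covers the delicate range $0<s<1/2$ uniformly, whereas the paper's brevity hides the appeal to a nontrivial textbook theorem. What your approach buys is independence from the extension/restriction characterization of $H^s(\Lambda)$ and full self-containedness; what it costs is length, and your opening interpolation paragraph is a false start you rightly abandon and could simply delete.
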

\begin{proof}
One first notes that $H^s(\Lambda)$ ($s>0$) is compactly embedded in $L^2(\Lambda)$
and $H^s_*(\Lambda)\subseteq H^s(\Lambda)$,  then Theorem \ref{embedding}
is readily checked.
\end{proof}

\subsection{Generalized Jacobi functions}

Let
$P_n^{\alpha,\beta}(x),\, n\in \NN_0$ with $\alpha,\beta>-1$  be the classical Jacobi polynomials which are orthogonal with
respect to the weight function $\omega^{\alpha,\beta}(x)=(1-x)^{\alpha}(1+x)^{\beta}$
over $\Lambda$, i.e.
\begin{equation}
\label{OrthP}
( P_n^{\alpha,\beta}, P_m^{\alpha,\beta})_{\omega^{\alpha,\beta}}=\gamma_{n}^{\alpha,\beta}\delta_{nm},
\qquad \gamma_{n}^{\alpha,\beta} =
\frac{2^{\alpha+\beta+1}}{2n+\alpha+\beta+1}\,
\frac{\Gamma(n+\alpha+1)\Gamma(n+\beta+1)}{n!\Gamma(n+\alpha+\beta+1)},
\end{equation}
where $\delta_{nm}$ is the Dirac delta symbol.

We now define a special type of generalized Jacobi functions:
\begin{equation}
\label{JP}
\mathcal{J}_n^{-\alpha,-\alpha}(x)=(1-x^2)^{\alpha}P_n^{\alpha,\alpha}(x), \qquad
 x \in \Lambda,\, n \in \mathbb{N}_0,\,  \alpha >-1.
\end{equation}
  It is clear that  $\mathcal{J}_n^{-\alpha,-\alpha}(x), n\in \NN_0$,
satisfy the homogeneous boundary conditions
\begin{equation}
\label{HomoBnd}
\partial_x^l\mathcal{J}_n^{-\alpha,-\alpha}(\pm 1)=0 ,\quad  l=0,1,\cdots,\lceil\alpha\rceil-1,\end{equation}
where $\lceil\alpha\rceil$ stands for the smallest integer not less than  $\alpha.$
\begin{lemma}
If $\alpha>0$, $2\alpha\in [2k-1,2k+1)$   and $\nu\in \{ 0,1, \dots, \lfloor \alpha\rfloor\}$, then
\begin{equation} \label{derpol}
D^{2\alpha-2\nu} \mathcal{J}_n^{-\alpha,-\alpha}(x)= (-1)^{k}\dfrac{2^{2\nu} \Gamma(n+2\alpha-2\nu+1)}{n!} P^{\alpha-2\nu,\alpha-2\nu}_{n+2\nu}(x), \quad n\in \NN_0.
\end{equation}
\end{lemma}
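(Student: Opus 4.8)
\noindent\emph{Proof plan.}
The idea is to peel off the integer-order part of the Riesz derivative via the second representation in \eqref{Def-RFD}, use integer-order differentiation to link the identities for the various $\nu$, and use Rodrigues' formula to reduce to the case $n=0$; everything then rests on one classical Beta-type integral, that of the fractional derivative of a power of $1-x^2$. Throughout, write $\{\alpha\}=\alpha-\lfloor\alpha\rfloor$.

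\emph{Step 1 (the $\nu$'s are linked).}
Put $s=2\alpha-2\nu$. From $2\alpha\in[2k-1,2k+1)$ and $0\le\nu\le\lfloor\alpha\rfloor$ one gets $s\in[m_0-2\nu-1,\,m_0-2\nu)$, where $(m_0,j)=(2k,2)$ if $2\alpha\in[2k-1,2k)$ and $(m_0,j)=(2k+1,1)$ if $2\alpha\in[2k,2k+1)$ (the value $2\alpha=2k$, for which $s$ is an even integer, is handled directly via $D^{s}=\partial_x^{s}$). Hence \eqref{Def-RFD} gives $D^{2\alpha-2\nu}=\partial_x^{\,m_0-2\nu}\circ I_j^{\,\rho}$ with $\rho=m_0-2\alpha$, and since $j$ and $\rho$ do not depend on $\nu$ while $m_0-2\nu\ge m_0-2\lfloor\alpha\rfloor\ge1$, we obtain the operator identity $D^{2\alpha-2(\nu-1)}=\partial_x^{2}\circ D^{2\alpha-2\nu}$. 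On the other hand, writing $Q_\nu$ for the claimed right-hand side of \eqref{derpol}, the Jacobi rule $\partial_x^{2}P_{N}^{(a,a)}=\tfrac14\tfrac{\Gamma(N+2a+3)}{\Gamma(N+2a+1)}P_{N-2}^{(a+2,a+2)}$ yields $\partial_x^{2}Q_\nu=Q_{\nu-1}$. Therefore it suffices to prove \eqref{derpol} for the single value $\nu=\lfloor\alpha\rfloor$; the remaining cases $\nu=\lfloor\alpha\rfloor-1,\dots,0$ then follow by differentiation (and in particular the case $\nu=0$, $D^{2\alpha}\mathcal J_n^{-\alpha,-\alpha}=(-1)^k\tfrac{\Gamma(n+2\alpha+1)}{n!}P_n^{(\alpha,\alpha)}$, is recovered). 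If $\alpha$ is an integer $k$, the base case $\nu=k$ is the trivial identity $D^{0}\mathcal J_n^{-k,-k}=(1-x^2)^kP_n^{(k,k)}$, which equals $Q_k$ because $P_{n+2k}^{(-k,-k)}(x)=4^{-k}(x^2-1)^kP_n^{(k,k)}(x)$.

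\emph{Step 2 (reduce $n$ to $0$).}
For $\nu=\lfloor\alpha\rfloor$ the order is $s=2\{\alpha\}\in[0,2)$. Insert Rodrigues' formula $(1-x^2)^{\alpha}P_n^{(\alpha,\alpha)}(x)=\tfrac{(-1)^n}{2^nn!}\partial_x^{n}(1-x^2)^{n+\alpha}$; since $\partial_x^{i}(1-x^2)^{n+\alpha}$ vanishes at $x=\pm1$ for $0\le i\le n$ (to order $n+\alpha-i>0$), the operator $I_j^\rho$ commutes with each such $\partial_x$, whence
\[
D^{2\{\alpha\}}\big[(1-x^2)^{\alpha}P_n^{(\alpha,\alpha)}\big]=\frac{(-1)^n}{2^nn!}\,\partial_x^{n}\,D^{2\{\alpha\}}\big[(1-x^2)^{n+\alpha}\big].
\]
The inner quantity is the $n=0$ version of the left-hand side of \eqref{derpol}, with $\alpha$ replaced by $n+\alpha$. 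Thus the lemma reduces to the single identity
\[
D^{2\{\beta\}}\big[(1-x^2)^{\beta}\big]=(-1)^{k_\beta}\,2^{2\lfloor\beta\rfloor}\,\Gamma\big(2\{\beta\}+1\big)\,P_{2\lfloor\beta\rfloor}^{(\beta-2\lfloor\beta\rfloor,\,\beta-2\lfloor\beta\rfloor)}(x),\qquad \beta>0,\ \ 2\beta\in[2k_\beta-1,2k_\beta+1),
\]
after which one applies $\partial_x^{n}$ (using $\partial_x^{n}P_N^{(a,a)}=2^{-n}\tfrac{\Gamma(N+n+2a+1)}{\Gamma(N+2a+1)}P_{N-n}^{(a+n,a+n)}$ with $\beta=n+\alpha$) and checks that the $\Gamma$- and power-of-$2$ factors collapse to $(-1)^k 2^{2\nu}\Gamma(n+2\alpha-2\nu+1)/n!$.

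\emph{Step 3 and the main obstacle.}
What remains is to establish the displayed $n=0$ identity, i.e.\ to compute the Riesz fractional derivative of order $2\{\beta\}\in[0,2)$ of the pure power $(1-x^2)^{\beta}$. By \eqref{Def-RFD} this is $\partial_x$ (if $2\{\beta\}<1$) or $\partial_x^{2}$ (if $2\{\beta\}\ge1$) of the fractional integral $I_j^{\rho}[(1-t^2)^{\beta}]$, which is the elementary Beta-type integral $\int_{-1}^{1}(1-t^2)^{\beta}|x-t|^{\rho-1}\,dt$ (with an extra $\operatorname{sgn}(x-t)$ in the $I_1$ case); its value is a Gauss hypergeometric function of $x^2$ that terminates precisely because the exponent difference $\beta-\{\beta\}=\lfloor\beta\rfloor$ is a nonnegative integer, and after the differentiation it collapses, through the quadratic transformation for Jacobi/Gegenbauer polynomials, to the stated polynomial of degree $2\lfloor\beta\rfloor$. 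This is the only genuinely fractional step and where the real work lies: one must track the two parity branches (hence $I_1$ versus $I_2$ and the sign $(-1)^{k_\beta}$), the degenerate orders $2\{\beta\}=0$ (trivial, $D^{0}=\mathrm{id}$) and $2\{\beta\}=1$ (defined by a limit, and harmless for $I_1$), and the normalizations in the classical hypergeometric/Jacobi formulae. The closed form for $I_j^{\rho}[(1-t^2)^{\beta}]$ used here is of the kind established in \cite{MCS16}, from which it may be quoted, or else derived directly as sketched.
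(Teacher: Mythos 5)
Your argument is correct in structure but takes a genuinely different route from the paper's. The paper proves \eqref{derpol} in one stroke by quoting Podlubny's identities \eqref{I2}--\eqref{I1} for the Riesz-type integral of a \emph{general} weighted Jacobi polynomial (taking $i=j=k$ there), which give $I_{j}^{\rho}\mathcal{J}_n^{-\alpha,-\alpha}$ directly as a single Jacobi polynomial of degree $n+2k$ or $n+2k+1$; the lemma then follows by applying $\partial_x^{2k-2\nu}$ or $\partial_x^{2k+1-2\nu}$ and the standard Jacobi differentiation rule. You instead make two exact reductions --- the operator recurrence $D^{s+2}=\partial_x^{2}\circ D^{s}$ matched against $\partial_x^{2}Q_{\nu}=Q_{\nu-1}$ to collapse all $\nu$ to $\nu=\lfloor\alpha\rfloor$, and Rodrigues' formula together with the commutation of $I_j^{\rho}$ with $\partial_x$ on functions vanishing at $\pm1$ to collapse all $n$ to $n=0$ --- so that the only fractional input is the single evaluation $D^{2\{\beta\}}[(1-x^2)^{\beta}]$, which is precisely the $n=0$ instance of the identities the paper cites. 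Both reductions check out (including the sign bookkeeping $(-1)^{k+n}(-1)^{n}=(-1)^{k}$ and the integer-$\alpha$ base case via $P_{n+2k}^{(-k,-k)}=4^{-k}(x^2-1)^{k}P_n^{(k,k)}$). Your version buys a clean isolation of where the fractional calculus actually enters and needs only the weakest quotable fact; the paper's is shorter because the stronger fact is available off the shelf. Two caveats: your Step 3 is only a sketch, so the proof is complete only modulo quoting that one Beta-type integral (a move exactly parallel to the paper's citation of \cite{P99}, hence acceptable, but if you mean to derive it ``directly'' you still owe the computation); and at $s=2\{\beta\}=1$, i.e.\ $2\alpha=2k-1$, the degenerate normalization is that of $I_2^{1}$ (the even-$k$ branch, where $1/\cos(\pi\rho/2)$ blows up), not of $I_1$, so ``harmless for $I_1$'' points at the wrong branch --- note that the paper's own use of \eqref{I2} also excludes this endpoint since it requires $s\in(0,1)$.
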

\begin{proof}
We  resort to the following identities on the Riesz integral of  Jacobi functions
\cite[Theorem 6.5 and Theorem 6.7]{P99},
\begin{align}
\label{I2}
\begin{split}
\int_{-1}^1  \frac{\omega^{ \frac{s-1}{2}+i , \frac{s-1}{2}+j } (t)
P^{ \frac{s-1}{2}+i , \frac{s-1}{2}+j }_n (t) }{ |x-t|^{s}} dt
= \frac{\pi (-1)^i 2^{i+j} \Gamma(n+s)}{n! \Gamma(s) \cos \frac{s \pi}{2}}
P^{\frac{s-1}{2}-i,\frac{s-1}{2}-j}_{n+i+j}(x),
\\
 x\in (-1,1),\  s\in (0,1),\  i,j\in \mathbb{Z}, \  i,j>-\frac{s+1}{2},\  n+i+j\ge 0;
 \end{split}
 \\
\label{I1}
 \begin{split}
\int_{-1}^1  \frac{\mathrm{sgn}(x-t) }{ |x-t|^{s}} \omega^{ \frac{s}{2}+i , \frac{s}{2}+j } (t)
P^{ \frac{s}{2}+i , \frac{s}{2}+j }_n (t) dt
= \frac{\pi (-1)^i 2^{i+j+1} \Gamma(n+s)}{n! \Gamma(s) \sin \frac{s \pi}{2}}
P^{\frac{s}{2}-i-1,\frac{s}{2}-j-1}_{n+i+j+1}(x),
\\
 x\in (-1,1),\  s\in (0,1),\  i,j\in \mathbb{Z}, \  i,j>-\frac{s+2}{2},\  n+i+j+1\ge 0.
 \end{split}
\end{align}
Thus, by taking  $s=2\alpha-2k$ and $i =j= k$  when  $2\alpha\in [2k,2k+1)$, we  derive that
\begin{align*}
&I_{1}^{2k+1-2\alpha}\mathcal{J}^{-\alpha,-\alpha}_n(x)
=
\frac{ (-1)^k 2^{2k+1} \Gamma(n+2\alpha-2k)}{n!  }
P^{\alpha-2k-1,\alpha-2k-1}_{n+2k+1}(x),
\\
&D^{2\alpha-2\nu}\mathcal{J}^{-\alpha,-\alpha}_n(x)  =
\frac{ (-1)^k 2^{2k+1} \Gamma(n+2\alpha-2k)}{n!  }
\frac{d^{2k+1-2\nu}}{dx^{2k+1-2\nu}}P^{\alpha-2k-1,\alpha-2k-1}_{n+2k+1}(x)
\\
 &\qquad =
\frac{ (-1)^k 2^{2\nu} \Gamma(n+2\alpha-2\nu+1)}{n!  }
P^{\alpha-2\nu,\alpha-2\nu}_{n+2\nu}(x);
\end{align*}
While, by taking  $s=2\alpha-2k+1$ and $i =j= k$  when  $2\alpha\in [2k-1,2k)$, we also get that
\begin{align*}
&I_{2}^{2k-2\alpha}\mathcal{J}^{-\alpha,-\alpha}_n(x)
=
 \frac{ (-1)^k 2^{2k} \Gamma(n+2\alpha-2k+1)}{n! }
P^{\alpha-2k,\alpha-2k}_{n+2k}(x),
\\
&D^{2\alpha-2\nu}\mathcal{J}^{-\alpha,-\alpha}_n(x)  =
 \frac{ (-1)^k 2^{2k} \Gamma(n+2\alpha-2k+1)}{n! }
\frac{d^{2k-2\nu}}{dx^{2k-2\nu}}P^{\alpha-2k,\alpha-2k}_{n+2k}(x)
\\
& \qquad =
\frac{ (-1)^k 2^{2\nu} \Gamma(n+2\alpha-2\nu+1)}{n!  }
P^{\alpha-2\nu,\alpha-2\nu}_{n+2\nu}(x).
\end{align*}
The proof is now completed.
\end{proof}

By \eqref{selfadjoint}, \eqref{derpol} and \eqref{OrthP}, one arrives at the following theorem.

\begin{theorem}
\label{TH:Basis}
 $\mathcal{J}_m^{-\alpha,-\alpha},\ m\ge 0$ form a complete  orthogonal
system in $H_*^{\alpha}(\Lambda)$. More precisely,
\begin{align}
\label{OrthJ}
 (D^{\alpha}\mathcal{J}_m^{-\alpha,-\alpha}, D^{\alpha}\mathcal{J}_n^{-\alpha,-\alpha} )
 = \frac{2^{2\alpha+1}\Gamma(m+\alpha+1)^2}{m!^2(2m+2\alpha+1)} \delta_{m,n},
 \qquad m,n\in \NN_0,\, \alpha\ge 0.
\end{align}
\end{theorem}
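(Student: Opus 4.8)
The plan is to deduce the orthogonality relation \eqref{OrthJ} directly from the self-adjointness identity \eqref{selfadjoint}, the explicit formula \eqref{derpol} from the preceding Lemma, and the orthogonality \eqref{OrthP} of the classical Jacobi polynomials; completeness will then follow from an approximation/density argument. First I would fix $\alpha>0$ with $2\alpha\in[2k-1,2k+1)$ for the appropriate integer $k$, and take $\nu=0$ in \eqref{derpol}, giving
\[
D^{2\alpha}\mathcal{J}_n^{-\alpha,-\alpha}(x) = (-1)^k\,\frac{\Gamma(n+2\alpha+1)}{n!}\,P_n^{\alpha,\alpha}(x).
\]
By \eqref{selfadjoint} applied with $u=\mathcal{J}_m^{-\alpha,-\alpha}$ and $v=\mathcal{J}_n^{-\alpha,-\alpha}$ (both in $H_*^\alpha(\Lambda)$, since these functions are smooth with zero extension in the relevant Sobolev class), we have
\[
(D^\alpha\mathcal{J}_m^{-\alpha,-\alpha}, D^\alpha\mathcal{J}_n^{-\alpha,-\alpha}) = (-1)^k (D^{2\alpha}\mathcal{J}_m^{-\alpha,-\alpha}, \mathcal{J}_n^{-\alpha,-\alpha}).
\]
Substituting the formula for $D^{2\alpha}\mathcal{J}_m^{-\alpha,-\alpha}$ and recalling $\mathcal{J}_n^{-\alpha,-\alpha}=\omega^{\alpha,\alpha}P_n^{\alpha,\alpha}$, the right-hand side becomes
\[
(-1)^k(-1)^k\,\frac{\Gamma(m+2\alpha+1)}{m!}\int_\Lambda P_m^{\alpha,\alpha}(x)\,P_n^{\alpha,\alpha}(x)\,\omega^{\alpha,\alpha}(x)\,dx = \frac{\Gamma(m+2\alpha+1)}{m!}\,\gamma_m^{\alpha,\alpha}\,\delta_{mn}.
\]
Plugging in $\gamma_m^{\alpha,\alpha}=\dfrac{2^{2\alpha+1}}{2m+2\alpha+1}\dfrac{\Gamma(m+\alpha+1)^2}{m!\,\Gamma(m+2\alpha+1)}$ from \eqref{OrthP} and simplifying collapses the Gamma factors to yield exactly $\dfrac{2^{2\alpha+1}\Gamma(m+\alpha+1)^2}{m!^2(2m+2\alpha+1)}\delta_{mn}$, which is \eqref{OrthJ}. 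I would also need to check the edge case $2\alpha$ an odd integer, where $\cos\frac{\pi s}{2}=0$ and $D^{2\alpha}$ is defined by a limit; here one takes limits in the formula \eqref{derpol} (or argues by continuity of both sides in $\alpha$) to confirm the identity persists.

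For completeness of $\{\mathcal{J}_m^{-\alpha,-\alpha}\}_{m\ge0}$ in $H_*^\alpha(\Lambda)$, the cleanest route is: the map $v\mapsto D^\alpha v$ is (by \eqref{SemiNorm} and the interpolation identification $H_*^\alpha(\Lambda)=[L^2(\Lambda),H_0^{2m}(\Lambda)]_{\alpha/2m}$) an isometry — up to equivalent norms — onto a closed subspace of $L^2(\Lambda)$, and \eqref{derpol} with $\nu=0$ shows $D^\alpha$ maps $\mathrm{span}\{\mathcal{J}_n^{-\alpha,-\alpha}\}$ onto... actually this needs care since $D^\alpha$ (half-integer-order, not $D^{2\alpha}$) applied to $\mathcal{J}_n^{-\alpha,-\alpha}$ is not one of the polynomials in \eqref{derpol} unless $\alpha$ itself is an even integer. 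A better argument: suppose $v\in H_*^\alpha(\Lambda)$ is orthogonal to every $\mathcal{J}_m^{-\alpha,-\alpha}$ in the inner product $(D^\alpha\cdot,D^\alpha\cdot)$. By \eqref{selfadjoint} this means $(-1)^k(D^{2\alpha}\mathcal{J}_m^{-\alpha,-\alpha},v)=0$ for all $m$, i.e. $(P_m^{\alpha,\alpha},v)=0$ for all $m$ (using the $\nu=0$ formula and that $\Gamma(m+2\alpha+1)/m!\neq0$). Since $\{P_m^{\alpha,\alpha}\}$ is complete in $L^2_{\omega^{-\alpha,-\alpha}}$... — here one must match up the weights: $(P_m^{\alpha,\alpha},v)_{L^2}$ equals the $L^2_{\omega^{\alpha,\alpha}}$-pairing of $P_m^{\alpha,\alpha}$ with $\omega^{-\alpha,-\alpha}v$, and completeness of Jacobi polynomials in the weighted space forces $\omega^{-\alpha,-\alpha}v=0$, hence $v=0$. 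I would then invoke that $\mathscr{D}(\Lambda)$ is dense in $H_*^\alpha(\Lambda)$ (stated in the excerpt) together with the fact that the $\mathcal{J}_m^{-\alpha,-\alpha}$ already satisfy the homogeneous boundary conditions \eqref{HomoBnd} to legitimize these pairings.

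The main obstacle I anticipate is \emph{not} the algebra of the Gamma functions (routine) but rather the functional-analytic bookkeeping for completeness: one must be careful that $\mathcal{J}_m^{-\alpha,-\alpha}$ genuinely lies in $H_*^\alpha(\Lambda)$ (true because $\omega^{\alpha,\alpha}P_m^{\alpha,\alpha}$ vanishes to sufficient order at $\pm1$, cf.\ \eqref{HomoBnd}), that the self-adjointness identity \eqref{selfadjoint} applies to these specific functions (it does, as they are smooth in the interior with the right boundary behaviour so they lie in the form domain), and — most delicately — that "orthogonal to all $P_m^{\alpha,\alpha}$ in the unweighted $L^2$ pairing against a function in $H_*^\alpha$" actually forces that function to vanish. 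This last point hinges on the weight-matching between the $L^2(\Lambda)$ pairing produced by \eqref{selfadjoint} and the natural $L^2_{\omega^{\alpha,\alpha}}(\Lambda)$ orthogonality of the Jacobi polynomials, and on the integrability afforded by \eqref{normH00}, which controls $\|v\|_{\omega^{-2\alpha,-2\alpha}}$ for $v\in H_*^\alpha$; I would handle it by combining \eqref{normH00} (or its higher-order analogue via interpolation) with the standard completeness of $\{P_m^{\alpha,\alpha}\}$ in $L^2_{\omega^{\alpha,\alpha}}(\Lambda)$.
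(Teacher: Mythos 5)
Your argument for the orthogonality relation \eqref{OrthJ} is exactly the paper's: the paper obtains it in one line by combining \eqref{selfadjoint}, \eqref{derpol} with $\nu=0$, and \eqref{OrthP}, precisely as you do, and your Gamma-function bookkeeping checks out (including the cancellation of the two factors $(-1)^k$). The completeness half is not proved in the paper at all, and your reduction of it to the completeness of $\{P_m^{\alpha,\alpha}\}$ in $L^2_{\omega^{\alpha,\alpha}}(\Lambda)$ is a sound supplement, modulo the Hardy-type integrability $v\in L^2_{\omega^{-\alpha,-\alpha}}(\Lambda)$ for $\alpha\ge 1$, which you correctly flag as requiring the higher-order analogue of \eqref{normH00}.
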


As an immediate consequence of Theorem \ref{TH:Basis},  any $u\in H_*^s(\Lambda)$
has an expansion in generalized Jacobi functions,
\begin{align}
u(x) =  \sum_{i=0}^{\infty} u_i \mathcal{J}^{-s,-s}_i(x), \qquad
  u_i  = \frac{(D^su, D^s\mathcal{J}^{-s,-s}_i)}{(D^s\mathcal{J}^{-s,-s}_i, D^s\mathcal{J}^{-s,-s}_i)},
\end{align}
and
\begin{align}
\label{Parservalu}
|u|_s^2 =  \sum_{i=0}^{\infty} \frac{2^{2\alpha+1}\Gamma(i+\alpha+1)^2}{i!^2(2i+2\alpha+1)} | u_i|^2.
\end{align}

\section{Riesz fractional differential eigenvalue problems  and Jacobi-Galerkin approximation}
\label{sec:imp}
\subsection{Riesz fractional differential eigenvalue problems}
We consider the Riesz  fractional differential eigenvalue problems
of order $2\alpha \in [2k-1,2k+1)$  with $k\in \mathbb{N}$,
\begin{equation} \label{eigp1}
\begin{cases}
(-1)^{k}D^{2\alpha} u(x)=\lambda  u(x),   & x\in\Lambda,
\\
u^{(l)}(\pm 1)=0 ,&   l=0,1,\cdots,k-1. 
\end{cases}
\end{equation}
The weak formulation of \eqref{eigp1} reads:
to find  nontrivial  $(\lambda, u) \in \mathbb{R} \times H^{\alpha}_*(\Lambda)$, such that
\begin{equation} \label{weakf}
a(u,v)=\lambda b(u,v),\qquad v \in H^{\alpha}_*(\Lambda),
\end{equation}
where $a(\cdot,\cdot)$ and $b(\cdot,\cdot)$ are the bilinear forms defined by
\begin{align*}
a(u,v)&=(D^{\alpha} u,  D^{\alpha} v),
\quad && u,v \in H^{\alpha}_*(\Lambda), \\
b(u,v)&=(u,v)
,&&  u,v \in L^2(\Lambda).
\end{align*}

It is obvious that
$a(\cdot,\cdot)$ and $b(\cdot,\cdot)$ are symmetric, positive definite, continuous
and coercive on  $H^{\alpha}_*(\Lambda)\times H^{\alpha}_*(\Lambda)$ and
$L^2(\Lambda)\times L^2(\Lambda)$, respectively.

\begin{remark}We would like to point out  that
unlike the weak formulation in  \cite{MCS16},  our weak formulation \eqref{weakf}  here
is formulated in a standard symmetrical way, which is convenient for us to
 conduct the numerical analysis on eigenvalue problems of the fractional differential equations.
\end{remark}

\begin{remark}
 Problem \eqref{eigp1} admits an infinite sequence of  eigensolutions $\{(\lambda_i, \psi_i)\}_{i=1}^{\infty}$
with the eigenvalues being ordered  increasingly,
$\lambda_1<\lambda_2\le \lambda_3\le \cdots$.
All eigenvalues  of  \eqref{eigp1} are real and positive.
Moreover, it is proved that  all eigenvalues are simple if  $\frac12 < \alpha\le  2$  \cite{K12}.
Besides the following proposition on the Poincar\'{e} inequality indicates,
$$\lambda_1>\Gamma(2\alpha+1).$$
\end{remark}

\begin{proposition} \label{poincare}
(Poincar\'{e} inequality) Suppose  $u \in H^{\alpha}_*(\Lambda)\setminus\{0\}$, then
\begin{equation}
|u|_{\alpha,*}  > \sqrt{\Gamma(2\alpha+1)}\, \|u\|_{\omega^{-\alpha,-\alpha}}> \sqrt{\Gamma(2\alpha+1)}\, \|u\|.
\end{equation}
\end{proposition}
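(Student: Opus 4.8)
The plan is to reduce both sides of the asserted inequality to Fourier--Jacobi series and compare them coefficient by coefficient, using the generalized Jacobi basis of Theorem~\ref{TH:Basis} to simultaneously diagonalize the seminorm $|\cdot|_{\alpha,*}$ and the weighted norm $\|\cdot\|_{\omega^{-\alpha,-\alpha}}$. Concretely, I would expand $u=\sum_{i\ge 0}u_i\mathcal{J}_i^{-\alpha,-\alpha}\in H^{\alpha}_*(\Lambda)$ and invoke \eqref{SemiNorm}--\eqref{Parservalu} to get
\begin{align*}
|u|_{\alpha,*}^2=|u|_{\alpha}^2=\sum_{i\ge 0}\frac{2^{2\alpha+1}\Gamma(i+\alpha+1)^2}{i!^2(2i+2\alpha+1)}\,|u_i|^2 .
\end{align*}
For the right-hand side the point is that $\omega^{-\alpha,-\alpha}(x)\,|u(x)|^2=\omega^{\alpha,\alpha}(x)\big|\sum_i u_i P_i^{\alpha,\alpha}(x)\big|^2$, so the Jacobi orthogonality \eqref{OrthP} yields
\begin{align*}
\|u\|_{\omega^{-\alpha,-\alpha}}^2=\sum_{i\ge 0}\gamma_i^{\alpha,\alpha}\,|u_i|^2=\sum_{i\ge 0}\frac{2^{2\alpha+1}\Gamma(i+\alpha+1)^2}{(2i+2\alpha+1)\,i!\,\Gamma(i+2\alpha+1)}\,|u_i|^2 .
\end{align*}

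The comparison is then immediate: the $i$-th coefficient of $|u|_{\alpha,*}^2$ divided by that of $\|u\|_{\omega^{-\alpha,-\alpha}}^2$ equals $\Gamma(i+2\alpha+1)/i!=\Gamma(2\alpha+1)\prod_{j=1}^{i}(1+2\alpha/j)\ge\Gamma(2\alpha+1)$ for every $i\in\NN_0$, with equality exactly at $i=0$ since $\alpha>0$. Hence $|u|_{\alpha,*}^2\ge\Gamma(2\alpha+1)\|u\|_{\omega^{-\alpha,-\alpha}}^2$, the bound being an equality only on multiples of $\mathcal{J}_0^{-\alpha,-\alpha}=(1-x^2)^{\alpha}$. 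For the second inequality I would simply note $\|u\|_{\omega^{-\alpha,-\alpha}}^2-\|u\|^2=\int_{\Lambda}\big(\omega^{-\alpha,-\alpha}(x)-1\big)|u(x)|^2\,dx\ge 0$, with strict inequality whenever $u\ne 0$ because $\omega^{-\alpha,-\alpha}(x)>1$ for all $x\in\Lambda\setminus\{0\}$. Chaining the two estimates gives the result; note that on the exceptional multiples of $(1-x^2)^{\alpha}$ the first bound is an equality but the second is strict, so the overall strict bound $|u|_{\alpha,*}>\sqrt{\Gamma(2\alpha+1)}\,\|u\|$ — hence $\lambda_1>\Gamma(2\alpha+1)$ — still holds for every nonzero $u$.

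The delicate step is the Parseval identity for $\|u\|_{\omega^{-\alpha,-\alpha}}^2$: a priori the series $\sum u_i\mathcal{J}_i^{-\alpha,-\alpha}$ is only known to converge in $H^{\alpha}_*(\Lambda)$, whereas applying \eqref{OrthP} requires convergence of $\sum u_i P_i^{\alpha,\alpha}$ in $L^2_{\omega^{\alpha,\alpha}}(\Lambda)$. I would circumvent this by first proving the inequality for $u\in\mathscr{D}(\Lambda)$ — where $u/\omega^{\alpha,\alpha}$ is smooth with compact support in $\Lambda$, hence lies in $L^2_{\omega^{\alpha,\alpha}}(\Lambda)$ with a genuinely convergent Jacobi expansion and all manipulations are justified — and then extending to general $u$ by the density of $\mathscr{D}(\Lambda)$ in $H^{\alpha}_*(\Lambda)$ together with Fatou's lemma applied to the two non-negative coefficient series. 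This density/limiting argument is the main obstacle; the coefficient inequality itself is elementary.
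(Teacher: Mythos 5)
Your proof follows essentially the same route as the paper's: expand $u$ in the generalized Jacobi basis, use the orthogonality relations \eqref{OrthJ} and \eqref{OrthP} to diagonalize $|u|_{\alpha,*}^2$ and $\|u\|_{\omega^{-\alpha,-\alpha}}^2$ simultaneously, and compare the coefficient ratio $\Gamma(i+2\alpha+1)/i!$ with $\Gamma(2\alpha+1)$. You are in fact more careful than the paper on two points: you observe that this ratio equals $\Gamma(2\alpha+1)$ at $i=0$ (so the first inequality degenerates to an equality on multiples of $\mathcal{J}_0^{-\alpha,-\alpha}$, strictness of the full chain being recovered from $\omega^{-\alpha,-\alpha}>1$ a.e.), and you justify the weighted Parseval identity by a density/Fatou argument, both of which the paper glosses over.
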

\begin{proof} For any $ i \in \mathbb{N}_0$,
\begin{equation*}
\frac{\Gamma(i+2\alpha+1)}{i!}= \frac{(i+2\alpha)(i+2\alpha-1)
\cdots (2\alpha+1)}{i!}\, \Gamma(2\alpha+1)
>\Gamma(2\alpha+1).
\end{equation*}
Then, for any $ u \in H^{\alpha}_*(\Lambda)$ with $
u(x)=\displaystyle \sum_{i=0}^\infty u_i\mathcal{J}_i^{-\alpha,-\alpha}(x),$
 we get from \eqref{OrthJ}, \eqref{OrthP} together with the Parseval's identities that
\begin{align*}
|u|_{\alpha}^2 =&\,  \sum_{i=0}^{\infty}
\frac{\Gamma(i+2\alpha+1)}{i!} \frac{2^{2\alpha+1}\Gamma(i+\alpha+1)^2}{i!\Gamma(i+2\alpha+1)(2i+2\alpha+1)} | u_i|^2
 \\
>  &\,\Gamma(2\alpha+1) \sum_{i=0}^{\infty}  \frac{2^{2\alpha+1}\Gamma(i+\alpha+1)^2}{i!\Gamma(i+2\alpha+1)(2i+2\alpha+1)} | u_i|^2
 \\
 =&\,\Gamma(2\alpha+1)\|u\|_{\omega^{-\alpha,-\alpha}}^2 >\Gamma(2\alpha+1)\|u\|^2,
\end{align*}
which completes the proof.
\end{proof}

\subsection{Jacobi-Galerkin spectral approximation and implementation}

Let $\mathbb{P}_N$ be the set of polynomials with degree less than or equal to $N$.  Then, we define the
finite dimensional fractional space for $\alpha>0$,
\begin{equation*}
\mathbb{F}_N^{-\alpha,-\alpha}(\Lambda):=\mbox{span}\Big\{\phi_n(x):=\frac{\sqrt{2n+2\alpha+1}\,n! }{2^{\alpha+\frac12}\,\Gamma(n+\alpha+1)}\mathcal{J}_n^{-\alpha,-\alpha}(x),0\leq n \leq N\Big\}.
\end{equation*}
 The Jacobi-Galerkin  spectral method for  \eqref{weakf} can be written as follows,
to find $(\lambda_N, u_N) \in \mathbb{R} \times \mathbb{F}_N^{-\alpha,-\alpha}(\Lambda)$ such that
\begin{equation} \label{weakn}
a(u_N,v_N)=\lambda_N b(u_N,v_N), \qquad  v_N \in \mathbb{F}_N^{-\alpha,-\alpha}(\Lambda).
\end{equation}

We now  give a brief description on  the  numerical implementation of the Jacobi Galerkin spectral
method for the Riesz fractional differential eigenvalue problems.
Firstly, we expand $u_N(x)$ with the unknowns  $\{u_j\}^{N}_{j=0}$ as follows,
\begin{equation} \label{expression}
\displaystyle u_N(x)=\sum^{N}_{j=0}u_{j}\phi_j(x).
\end{equation}
By inserting the expansions of \eqref{expression} into \eqref{weakn} and taking the test functions $v_N=\phi_i(x), \, i=0,1,\dots,N $, then
problem \eqref{weakn}  can be written in the following matrix form:
\begin{equation} \label{matrixf}
S\overline{u}=\lambda_N B\overline{u},
\end{equation}
where
 the coefficient vector $\overline{u}=(u_0,u_1,\dots,u_N)^{\mathsf{T}}$, and
 the stiffness and mass matrices $S = (S_{ij})_{0\leq i,j\leq N}$ and $M=(M_{ij})_{0\leq i,j\leq N}$ are determinated by the following lemma.
 \begin{lemma}
 \label{entries}
 It holds that
\begin{align}
\label{entrys}
&S_{ij}=(D^{\alpha}\phi_j,D^{\alpha}\phi_i)=\delta_{ij} ,
\\
\label{entrym}
&M_{ij}=(\phi_j,\phi_i)= \begin{cases}\frac{(-1)^{\frac{j-i}{2}} \sqrt{\pi (2i+2\alpha+1)(2j+2\alpha+1)}\,  \Gamma(2\alpha+1) (i+j)! }{
2^{2\alpha+i+j+1}\, \Gamma(2\alpha+\frac{i+j}{2}+\frac{3}{2})\, \Gamma(\alpha+\frac{i-j}{2}+1)\,
\Gamma(\alpha+\frac{j-i}{2}+1) \left(\frac{i+j}{2}\right)! } , & i+j \text{ even},\\
0, &  i+j \text{  odd}.\end{cases}
\end{align}
\end{lemma}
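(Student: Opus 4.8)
The plan is to prove the two formulas separately. The stiffness entry \eqref{entrys} is essentially immediate: by the very definition of $\phi_n$ as the normalization of $\mathcal{J}_n^{-\alpha,-\alpha}$ by the factor $\frac{\sqrt{2n+2\alpha+1}\,n!}{2^{\alpha+1/2}\,\Gamma(n+\alpha+1)}$, the orthogonality relation \eqref{OrthJ} of Theorem \ref{TH:Basis} gives $(D^\alpha\phi_j,D^\alpha\phi_i)=\frac{\sqrt{(2i+2\alpha+1)(2j+2\alpha+1)}\,i!\,j!}{2^{2\alpha+1}\Gamma(i+\alpha+1)\Gamma(j+\alpha+1)}\cdot\frac{2^{2\alpha+1}\Gamma(j+\alpha+1)^2}{j!^2(2j+2\alpha+1)}\delta_{ij}$, and the $\delta_{ij}$ collapses all $j$'s to $i$'s so that every factor cancels and one is left with $\delta_{ij}$. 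So the only real content is the mass matrix.

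For $M_{ij}=(\phi_i,\phi_j)$, the first step is to reduce to an integral of products of Jacobi functions. Writing everything back in terms of $\mathcal{J}$, we have $M_{ij}=c_i c_j\int_{-1}^{1}(1-x^2)^{2\alpha}P_i^{\alpha,\alpha}(x)P_j^{\alpha,\alpha}(x)\,dx$ with $c_n=\frac{\sqrt{2n+2\alpha+1}\,n!}{2^{\alpha+1/2}\Gamma(n+\alpha+1)}$. The parity statement (vanishing when $i+j$ is odd) is then clear because $P_n^{\alpha,\alpha}$ has the parity of $n$ and $(1-x^2)^{2\alpha}$ is even, so the integrand is odd whenever $i+j$ is odd. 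For $i+j$ even the key idea is to use the self-adjointness/fractional-integration machinery already set up: by \eqref{selfadjoint} one may transfer $D^\alpha$'s, but more efficiently I would use the connection formula \eqref{derpol} (or rather the underlying Riesz-integral identities \eqref{I2}–\eqref{I1}) to rewrite $(1-x^2)^{2\alpha}P_i^{\alpha,\alpha}P_j^{\alpha,\alpha}$, up to a constant, as $P_j^{\alpha,\alpha}$ times the RFD/Riesz integral applied to $\mathcal{J}_i^{-\alpha,-\alpha}$, converting the weight $(1-x^2)^{2\alpha}$ into $(1-x^2)^{0}$ and a Jacobi polynomial of lower index. An alternative, which may be cleaner, is to expand the product $P_i^{\alpha,\alpha}(x)P_j^{\alpha,\alpha}(x)$ in the linearization formula for Jacobi polynomials (a $\,_3F_2$-type Clebsch–Gordan expansion) and then use the single moment $\int_{-1}^1 (1-x^2)^{2\alpha}P_m^{2\alpha,2\alpha}(x)\,dx$, which is zero unless $m=0$ and equals $\int_{-1}^1(1-x^2)^{2\alpha}\,dx=\frac{\sqrt\pi\,\Gamma(2\alpha+1)}{\Gamma(2\alpha+3/2)}$; this isolates a single term of the linearization and produces the ratio of Gamma functions. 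In either route the final step is a Gamma-function bookkeeping exercise: substitute the closed form for $c_ic_j$, the Jacobi normalization constants $\gamma^{\alpha,\alpha}_n$ from \eqref{OrthP}, and the one nontrivial integral, then simplify using duplication/reflection identities until the claimed expression
$$
M_{ij}=\frac{(-1)^{(j-i)/2}\sqrt{\pi(2i+2\alpha+1)(2j+2\alpha+1)}\,\Gamma(2\alpha+1)(i+j)!}{2^{2\alpha+i+j+1}\Gamma(2\alpha+\frac{i+j}{2}+\frac32)\Gamma(\alpha+\frac{i-j}{2}+1)\Gamma(\alpha+\frac{j-i}{2}+1)\left(\frac{i+j}{2}\right)!}
$$
emerges; the sign $(-1)^{(j-i)/2}$ is tracked through the parity relations $P_n^{\alpha,\alpha}(-x)=(-1)^nP_n^{\alpha,\alpha}(x)$ and the $(-1)^i$ appearing in \eqref{I2}.

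The main obstacle is the last step: the explicit Gamma-function reduction, where one must carefully match the asymmetry in $i$ and $j$ (the factors $\Gamma(\alpha+\frac{i-j}{2}+1)$ and $\Gamma(\alpha+\frac{j-i}{2}+1)$ in the denominator and the sign $(-1)^{(j-i)/2}$) against the symmetric-looking intermediate expressions; this requires choosing the right representation of the Jacobi linearization coefficient (there are several equivalent $\,_3F_2$ forms) so that the resulting hypergeometric sum terminates after one term. I expect no conceptual difficulty beyond patience with the special-function algebra, and it may also be verified directly by checking the recurrence the entries must satisfy, or by symbolic computation, which is how such mass-matrix formulas are usually confirmed in the literature.
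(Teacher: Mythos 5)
Your treatment of \eqref{entrys} and of the parity statement in \eqref{entrym} is correct and matches the paper, and you correctly reduce the remaining work to the integral $\int_{-1}^1(1-x^2)^{2\alpha}P_i^{\alpha,\alpha}(x)P_j^{\alpha,\alpha}(x)\,dx$ together with the moment $\int_{-1}^1(1-x^2)^{2\alpha}dx=\sqrt{\pi}\,\Gamma(2\alpha+1)/\Gamma(2\alpha+\tfrac32)$. The gap is in the one step that carries all the content: neither of your two proposed devices produces the single-term collapse. Route (a) misreads the role of \eqref{I2}--\eqref{I1}: those are integral transforms acting on $\mathcal{J}_i^{-\alpha,-\alpha}$, not pointwise rewritings of the product $(1-x^2)^{2\alpha}P_i^{\alpha,\alpha}P_j^{\alpha,\alpha}$; the mass matrix is a plain $L^2$ inner product, and the Riesz machinery (which diagonalizes the \emph{stiffness} form) gives no handle on it. Route (b) is either circular or does not terminate: the coefficient of $P_0^{2\alpha,2\alpha}$ in the expansion of $P_i^{\alpha,\alpha}P_j^{\alpha,\alpha}$ in the $(2\alpha,2\alpha)$ basis \emph{is}, up to the known constant $\gamma_0^{2\alpha,2\alpha}$, exactly the integral you are trying to compute; whereas the standard linearization formula expands the product in the \emph{same} parameter $(\alpha,\alpha)$, against which the weight $(1-x^2)^{2\alpha}$ is not orthogonalizing, so every even mode contributes and you are left with a full hypergeometric sum still to be evaluated, not ``a single term.''

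The missing idea, which is the heart of the paper's proof, is to collapse the product into a single (generalized) Jacobi polynomial \emph{before} changing basis: by Rodrigues' formula $(1-x^2)^{\alpha}P_n^{\alpha,\alpha}=\frac{(-1)^n}{2^n n!}\partial_x^n(1-x^2)^{n+\alpha}$ and $j$-fold integration by parts,
\begin{align*}
\int_{-1}^1(1-x^2)^{2\alpha}P_j^{\alpha,\alpha}(x)P_i^{\alpha,\alpha}(x)\,dx
=\frac{(-1)^{i}\,(i+j)!}{i!\,j!}\int_{-1}^1(1-x^2)^{2\alpha}P_{i+j}^{\alpha-i,\alpha-i}(x)\,dx ,
\end{align*}
and only then does the connection formula \eqref{connect}, applied to $P_{i+j}^{\alpha-i,\alpha-i}$ in the $(2\alpha,2\alpha)$ basis, combine with the orthogonality \eqref{OrthP} to leave exactly the $\nu=\frac{i+j}{2}$ term. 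The asymmetric factors $\Gamma(\alpha+\frac{i-j}{2}+1)^{-1}\Gamma(\alpha+\frac{j-i}{2}+1)^{-1}$ and the sign $(-1)^{(j-i)/2}$ come out of that single connection coefficient $c^{\alpha-i,2\alpha}_{i+j,(i+j)/2}$, not from tracking parities as you suggest. Without this step (or an equivalent summation theorem for the genuine sum your route (b) actually produces), the proof does not close.
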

\begin{proof}
Owing to  the orthogonality relation  \eqref{OrthJ}, one readily verifies \eqref{entrys}.

By a parity argument, one also  finds that  $M_{ij}=0$  whenever $i+j$ is odd.
To prove \eqref{entrym} for even $i+j$, we shall resort to the following
connection identity of two Jacobi polynomials \cite[Theorem 7.1.4]{Andrews99},
\begin{align}
\label{connect}
\begin{split}
P^{\gamma,\gamma}_n(x) =&\,
\sum_{\nu=0}^{\lfloor \frac{n}{2}\rfloor} c^{\gamma,\alpha}_{n,\nu} P^{\alpha,\alpha}_{n-2\nu}(x),
\\
c^{\gamma,\alpha}_{n,\nu}:=&\,\frac{(\gamma+1)_n(2\alpha+1)_{n-2\nu}(\gamma+\frac12)_{n-\nu} (\alpha+\frac32)_{n-2\nu} (\gamma-\alpha)_{\nu} }{(2\gamma+1)_n (\alpha+1)_{n-2\nu}(\alpha+\frac32)_{n-\nu} (\alpha+\frac12)_{n-2\nu} \nu!},
\end{split}
\end{align}
where the Pochhammer symbol $(a)_\nu=\frac{\Gamma(a+\nu)}{\Gamma(a)}$.

Indeed, by  Rodrigues' formula and integration by parts, one obtains
\begin{align*}
&\, \int_{-1}^1 (1-x^2)^{\alpha}P^{\alpha,\alpha}_j(x) (1-x^2)^{\alpha}P^{\alpha,\alpha}_i(x) dx
= \frac{(-1)^{j+i}}{2^{j+i}j!i!}
\int_{-1}^1 \partial_x^j(1-x^2)^{j+\alpha}  \cdot \partial_x^i(1-x^2)^{i+\alpha}  dx
\\
= &\,\frac{(-1)^{j}}{2^{j+i}j!i!}
\int_{-1}^1 (1-x^2)^{i+\alpha}   \partial_x^{j+i}(1-x^2)^{j+\alpha} dx
= \frac{(-1)^{i}  (j+i)!}{j!i!}
\int_{-1}^1 (1-x^2)^{2\alpha}  P_{j+i}^{\alpha-i,\alpha-i} (x)dx,
\end{align*}
where the  $P_{j+i}^{\alpha-i,\alpha-i}$ is a generalized Jacobi polynomial as discussed in \cite[\S4.22]{Szego39} whenever $\alpha-i\leq -1$.
To proceed, we utilize \eqref{connect} to  expand  $P^{\alpha-i,\alpha-i}_{j+i}$  as
$\sum_{\nu=0}^{\lfloor \frac{i+j}{2}\rfloor} c^{\alpha-i,2\alpha}_{i+j,\nu} P^{2\alpha,2\alpha}_{i+j-2\nu}$. Due  the orthogonality \eqref{OrthP} of Jacobi polynomials, we  derive  that
\begin{align*}
 &\int_{-1}^1 (1-x^2)^{\alpha}P^{\alpha,\alpha}_j(x) (1-x^2)^{\alpha}P^{\alpha,\alpha}_i(x) dx
 =  \frac{(-1)^{i}  (j+i)!}{j!i!}
\int_{-1}^1 (1-x^2)^{2\alpha}  c^{\alpha-i,2\alpha}_{j+i,\frac{j+i}{2}} P_{0}^{2\alpha,2\alpha} (x)dx
 \\
 =&\,\frac{(-1)^{i}  (j+i)!}{j!i!} \frac{(\alpha-i+1)_{j+i}  (\alpha-i+\frac12)_{\frac{j+i}{2}}  (-i-\alpha)_{\frac{j+i}{2}} }{(2\alpha-2i+1)_{j+i}  (2\alpha+\frac32)_{\frac{j+i}{2}} \frac{j+i}{2}!  }
\gamma^{2\alpha,2\alpha}_0 
\\
=&\,\frac{(-1)^{\frac{j-i}{2}}(j+i)!}{2^{j+i}j!i!} \frac{(-j-\alpha)_{\frac{j+i}{2}}    (-i-\alpha)_{\frac{j+i}{2}} }{ (2\alpha+\frac32)_{\frac{j+i}{2}} \frac{j+i}{2}!  }
\frac{\sqrt{\pi}\, \Gamma(2\alpha+1)}{ \Gamma(2\alpha+\frac32)}.
\end{align*}
Finally, \eqref{entrym} with even $i+j$ is  an immediate consequence of the above equation.
This ends the proof.
\end{proof}

It is worthy to point out that
the algebraic eigenvalue problem \eqref{matrixf} can be decoupled
into two
according to   even or odd modes.

\subsection{Numerical analysis}

In this subsection, we first give some estimates on the magnitudes of  the smallest and greatest numerical eigenvalues, and thus on the condition number of the mass matrix $M$.
Then the error estimates for both the eigenvalues and  eigenfunctions are presented.

Let $\{(\lambda_{i,N},\psi_{i,N})\}_{i=1}^{N+1}$ be the eigensolutions of \eqref{weakn} such that
$\lambda_{1,N} \le \lambda_{2,N}\le \cdots \le \lambda_{N+1,N}$.
The following Lemma indicates each the numerical eigenvalue satisfies  $\lambda_{i,N}\lesssim N^{4\alpha}$.

\begin{lemma}\label{inverse} (Inverse inequality) Suppose $ u_N \in \mathbb{F}_N^{-\alpha,-\alpha}(\Lambda)$, then
\begin{equation}
\label{INV}
\|D^{\alpha}u_N\|^2 \lesssim  N^{4\alpha}\, \|u_N\|^2 .
\end{equation}
\end{lemma}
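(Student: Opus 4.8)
\textbf{Proof plan for the inverse inequality \eqref{INV}.}
The plan is to expand $u_N$ in the orthonormal-type basis $\{\phi_n\}_{n=0}^N$ of $\mathbb{F}_N^{-\alpha,-\alpha}(\Lambda)$, so that by Lemma \ref{entries} (the stiffness matrix is the identity) one has $\|D^\alpha u_N\|^2=\sum_{n=0}^N|u_n|^2$, while $\|u_N\|^2=\overline{u}^{\mathsf T}M\overline{u}$. Thus \eqref{INV} is equivalent to the assertion that the smallest eigenvalue of the mass matrix $M$ is bounded below by $cN^{-4\alpha}$, or, spectrally, that $\|u_N\|^2\gtrsim N^{-4\alpha}\|D^\alpha u_N\|^2$ for all $u_N\in\mathbb{F}_N^{-\alpha,-\alpha}(\Lambda)$. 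Equivalently, writing $u_N=\sum_{n=0}^N \hat u_n\,\mathcal{J}_n^{-\alpha,-\alpha}$ and using \eqref{OrthJ} for the left side and \eqref{OrthP} together with \eqref{JP} for the $L^2$ norm on the right, the claim reduces to a comparison of two diagonal-weighted $\ell^2$ norms of the coefficient sequence — but the two weights are \emph{not} both diagonal in the same basis, since $\|u_N\|^2=\sum_{m,n}\hat u_m\hat u_n(\mathcal{J}_m^{-\alpha,-\alpha},\mathcal{J}_n^{-\alpha,-\alpha})$ couples modes of the same parity.

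The cleanest route, which I would take, is to avoid diagonalizing $M$ and instead bound $\|u_N\|$ from below by a single favorable mode, or bound $\|D^\alpha u_N\|$ from above directly. Concretely: first observe $\|u_N\|^2=\|(1-x^2)^\alpha q\|^2$ where $q=\sum_{n=0}^N \hat u_n P_n^{\alpha,\alpha}\in\mathbb{P}_N$, i.e. $\|u_N\|^2=\|q\|_{\omega^{2\alpha,2\alpha}}^2$; and $\|D^\alpha u_N\|^2=|u_N|_\alpha^2\asymp\|q\|_{\omega^{\alpha,\alpha}}^2$ up to constants depending only on $\alpha$ (this equivalence of the $H^\alpha_*$-seminorm with the weighted $L^2$ norm of the underlying polynomial follows by comparing \eqref{OrthJ} and \eqref{OrthP}, since $\gamma_n^{\alpha,\alpha}$ and the constant in \eqref{OrthJ} differ by a factor $\frac{\Gamma(n+\alpha+1)}{n!}\cdot\frac{n!}{\Gamma(n+\alpha+1)}\cdot(\text{bounded ratio})$ — more precisely their ratio is $\Gamma(n+\alpha+1)/\Gamma(n+1)\cdot\Gamma(n+1)/\Gamma(n+2\alpha+1)\asymp n^{-2\alpha}$, so one keeps track of that $n$-power). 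Then the inequality to be proved is a purely polynomial, one-dimensional weighted inverse inequality of the form
\begin{equation*}
\|q\|_{\omega^{\alpha,\alpha}}^2 \lesssim N^{4\alpha}\,\|q\|_{\omega^{2\alpha,2\alpha}}^2,\qquad q\in\mathbb{P}_N,
\end{equation*}
after the $n^{2\alpha}$ bookkeeping is absorbed (the worst mode being $n\sim N$, contributing exactly the $N^{2\alpha}$ that squares into part of $N^{4\alpha}$).

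The remaining polynomial statement is the heart of the matter, and I expect it to be the main obstacle. It says: passing from the weight $(1-x^2)^{2\alpha}$ to the lighter weight $(1-x^2)^\alpha$ costs at most $N^{4\alpha}$. The standard tool is a weighted inverse (Markov–Bernstein type) inequality for Jacobi weights: for $q\in\mathbb{P}_N$ and exponents $\gamma>\delta>-1$ one has $\|q\|_{\omega^{\delta,\delta}}\lesssim N^{\,\gamma-\delta}\,\|q\|_{\omega^{\gamma,\gamma}}$, with the exponent $\gamma-\delta$ here equal to $\alpha$, giving $N^{2\alpha}$ and hence $N^{4\alpha}$ after squaring. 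I would either cite such an inequality from the spectral-methods literature (e.g. Guo–Wang type estimates on generalized Jacobi polynomials) or prove it directly by expanding $q$ in $\{P_n^{\alpha,\alpha}\}$, writing $\|q\|_{\omega^{\alpha,\alpha}}^2=\sum_n\gamma_n^{\alpha,\alpha}|\hat u_n|^2$, re-expanding each $P_n^{\alpha,\alpha}$ in $\{P_k^{2\alpha,2\alpha}\}$ via the connection formula \eqref{connect} (whose coefficients $c^{\alpha,2\alpha}_{n,\nu}$ are the same ones already used in Lemma \ref{entries}), and estimating the resulting quadratic form in $\{\hat u_n\}$ against $\|q\|_{\omega^{2\alpha,2\alpha}}^2=\sum_k\gamma_k^{2\alpha,2\alpha}|\tilde u_k|^2$ by controlling the growth of $c^{\alpha,2\alpha}_{n,\nu}$ and $\gamma_n^{\alpha,\alpha}/\gamma_{n-2\nu}^{2\alpha,2\alpha}$ in $n$ — all ratios are of moderate polynomial growth in $n\le N$, yielding the claimed $N^{4\alpha}$ with a constant depending only on $\alpha$. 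The delicate bookkeeping is making sure no extra powers of $N$ creep in from the connection coefficients or the Gamma-function ratios; a careful application of Stirling's asymptotics $\Gamma(n+a)/\Gamma(n+b)\asymp n^{a-b}$ handles this.
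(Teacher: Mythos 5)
Your plan is correct and follows essentially the same route as the paper's proof: expand $u_N=(1-x^2)^\alpha q$, bound $\|D^\alpha u_N\|^2\lesssim N^{2\alpha}\|q\|^2_{\omega^{\alpha,\alpha}}$ via the Gamma-ratio $\Gamma(i+2\alpha+1)/i!\lesssim N^{2\alpha}$, and then invoke the weighted polynomial inverse inequality $\|q\|_{\omega^{\alpha,\alpha}}\lesssim N^{\alpha}\|q\|_{\omega^{2\alpha,2\alpha}}$ (which the paper simply cites from Bernardi--Dauge--Maday rather than reproving via connection coefficients) to get $\|u_N\|^2\gtrsim N^{-2\alpha}\|q\|^2_{\omega^{\alpha,\alpha}}$. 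The two factors of $N^{2\alpha}$ combine exactly as you describe, so no further work is needed.
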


\begin{proof}
For any
$
u_N(x)=\displaystyle \sum_{i=0}^N u_i\mathcal{J}_i^{-\alpha,-\alpha}(x) \in \mathbb{F}_N^{-\alpha,-\alpha}(\Lambda),
$
we have
\begin{align}
\label{abnd}
\|D^{\alpha}u\|^2
=&  \displaystyle \sum_{i=0}^{N} \frac{\Gamma(i+2\alpha+1)}{i!} u_i^2 \gamma_{i}^{\alpha,\alpha}
\lesssim  N^{2\alpha} \displaystyle \sum_{i=0}^{N} u_i^2 \gamma^{\alpha,\alpha}_n,
\end{align}
where  we have used the following estimate  \cite[(3.26)]{MCS16} for the inequality sign above,
\begin{equation*}
\displaystyle \frac{\Gamma(i+2\alpha+1)}{i!} \lesssim i^{2\alpha}\lesssim N^{2\alpha},\qquad 0\le i\le N.
\end{equation*}

Meanwhile, we note that  \cite[Corollary 6.2 of page 99]{BDM07},
$$  \|\psi_N\|_{\omega^{\alpha,\alpha}} \lesssim N^{\alpha}\|\psi_N\|_{\omega^{2\alpha,2\alpha}},
\qquad \forall \psi_N(x) \in \mathbb{P}_N(\Lambda).$$
Since $\omega^{-\alpha,-\alpha} u_N\in \mathbb{P}_N(\Lambda)$, we now derive that
\begin{align}
\|u_N\|^2= & \big\|\omega^{-\alpha,-\alpha} u_N\big\|_{\omega^{2\alpha,2\alpha} }^2
 \gtrsim  N^{-2\alpha} \big\|\omega^{-\alpha,-\alpha} u_N\big\|_{\omega^{\alpha,\alpha}}^2
= N^{-2\alpha}  \sum_{i=0}^{N} u_i^2 \gamma_i^{\alpha,\alpha}  .
\label{bbnd}
\end{align}
A combination of \eqref{abnd} and \eqref{bbnd} then yields \eqref{INV}. This completes the proof.
\end{proof}

The following theorem gives some sharp estimates on the numerical eigenvalues.
\begin{theorem}\label{COND}  Its holds that
\begin{align}
\label{lambdaN}
&\lambda_{1,N}= \mathcal{O}(1), \quad  \lambda_{N+1,N}= \mathcal{O}(N^{4\alpha}),
\end{align}
as $N$ tends to infinity. Further,
the spectral condition number of the mass matrix $M$ satisfies
\begin{align}
\label{condM}
&\chi_N(M)=\frac{\lambda_{N+1,N}}{\lambda_{1,N}}=\mathcal{O}( N^{4\alpha}).
\end{align}
\end{theorem}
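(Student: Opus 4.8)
The plan is to combine the Poincar\'e inequality (Proposition \ref{poincare}) with the inverse inequality (Lemma \ref{inverse}) via the Rayleigh quotient characterization of the discrete eigenvalues, and then to supply a matching upper bound for $\lambda_{1,N}$ and lower bound for $\lambda_{N+1,N}$ so that both estimates in \eqref{lambdaN} are sharp two-sided. Recall that $\{(\lambda_{i,N},\psi_{i,N})\}_{i=1}^{N+1}$ solves \eqref{weakn}, and by the min--max (Courant--Fischer) principle applied to the symmetric positive-definite pencil $(S,M)=(S,B)$,
\begin{align*}
\lambda_{1,N} = \min_{0\ne v_N\in \mathbb{F}_N^{-\alpha,-\alpha}(\Lambda)} \frac{a(v_N,v_N)}{b(v_N,v_N)} = \min_{0\ne v_N} \frac{|v_N|_\alpha^2}{\|v_N\|^2},
\qquad
\lambda_{N+1,N} = \max_{0\ne v_N\in \mathbb{F}_N^{-\alpha,-\alpha}(\Lambda)} \frac{|v_N|_\alpha^2}{\|v_N\|^2}.
\end{align*}

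For the lower bound on $\lambda_{1,N}$: since $\mathbb{F}_N^{-\alpha,-\alpha}(\Lambda)\subset H^\alpha_*(\Lambda)$, Proposition \ref{poincare} gives $|v_N|_\alpha^2 = |v_N|_{\alpha,*}^2 > \Gamma(2\alpha+1)\|v_N\|^2$ for every nonzero $v_N$, hence $\lambda_{1,N}>\Gamma(2\alpha+1)=\mathcal{O}(1)$ uniformly in $N$. For the matching upper bound $\lambda_{1,N}=\mathcal{O}(1)$, one evaluates the Rayleigh quotient on the single fixed basis function $\phi_0=\mathcal{J}_0^{-\alpha,-\alpha}/\|\cdot\|$ (or just $\mathcal{J}_0^{-\alpha,-\alpha}$): by \eqref{OrthJ} and \eqref{entrym} (with $i=j=0$) the quotient equals $1/M_{00}$, a finite constant depending only on $\alpha$ and independent of $N$; since $\lambda_{1,N}\le 1/M_{00}$, we get $\lambda_{1,N}\asymp 1$. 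Symmetrically, for $\lambda_{N+1,N}$: Lemma \ref{inverse} gives $|v_N|_\alpha^2 = \|D^\alpha v_N\|^2 \lesssim N^{4\alpha}\|v_N\|^2$ for all $v_N\in\mathbb{F}_N^{-\alpha,-\alpha}(\Lambda)$, so $\lambda_{N+1,N}\lesssim N^{4\alpha}$. The matching lower bound comes from testing the Rayleigh quotient at $v_N=\mathcal{J}_N^{-\alpha,-\alpha}$: by \eqref{abnd}--\eqref{bbnd} (equivalently \eqref{Parservalu} and \eqref{OrthP}) the quotient for this single mode is $\bigl(\Gamma(N+2\alpha+1)/N!\bigr)\big/\|\mathcal{J}_N^{-\alpha,-\alpha}\|^2$; using $\Gamma(N+2\alpha+1)/N!\asymp N^{2\alpha}$ and the estimate $\|\psi_N\|_{\omega^{2\alpha,2\alpha}}\gtrsim N^{-\alpha}\|\psi_N\|_{\omega^{\alpha,\alpha}}$ from \cite[Corollary 6.2]{BDM07} to bound $\|\mathcal{J}_N^{-\alpha,-\alpha}\|^2=\|\omega^{-\alpha,-\alpha}\mathcal{J}_N^{-\alpha,-\alpha}\|_{\omega^{2\alpha,2\alpha}}^2$ from below relative to $\gamma_N^{\alpha,\alpha}$, one obtains $\lambda_{N+1,N}\gtrsim N^{2\alpha}\cdot N^{2\alpha}=N^{4\alpha}$. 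Hence $\lambda_{N+1,N}\asymp N^{4\alpha}$, proving \eqref{lambdaN}.

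Finally, \eqref{condM} is immediate: the generalized eigenvalues of the pencil $(S,M)$ with $S=I$ (by \eqref{entrys}) are exactly the eigenvalues of $M^{-1}$, so the spectral condition number is $\chi_N(M)=\lambda_{\max}(M)/\lambda_{\min}(M)=\lambda_{N+1,N}/\lambda_{1,N}$, and dividing the two estimates just proved gives $\chi_N(M)=\mathcal{O}(N^{4\alpha})$ (indeed $\asymp N^{4\alpha}$). The only genuinely delicate point is the sharpness of the two test-function bounds --- in particular verifying $\|\mathcal{J}_N^{-\alpha,-\alpha}\|^2 \asymp N^{-2\alpha}\gamma_N^{\alpha,\alpha}$ so that the lower bound on $\lambda_{N+1,N}$ matches the inverse-inequality upper bound; this is precisely the content of \eqref{bbnd} applied to the single mode $u_N=\mathcal{J}_N^{-\alpha,-\alpha}$, together with the reverse Markov-type inequality, so no new ingredient is needed beyond what the excerpt already supplies.
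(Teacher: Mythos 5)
Your treatment of $\lambda_{1,N}$ (lower bound via Proposition \ref{poincare} and min--max, upper bound via the Rayleigh quotient of $\phi_0$, which equals $1/M_{00}$), the upper bound $\lambda_{N+1,N}\lesssim N^{4\alpha}$ via Lemma \ref{inverse}, and the identification $\chi_N(M)=\lambda_{N+1,N}/\lambda_{1,N}$ (using $S=I$) all match the paper and are correct. The gap is in your lower bound for $\lambda_{N+1,N}$. Testing with the single top mode $v_N=\mathcal{J}_N^{-\alpha,-\alpha}$ does \emph{not} give $N^{4\alpha}$: its Rayleigh quotient is $\frac{\Gamma(N+2\alpha+1)}{N!}\,\gamma_N^{\alpha,\alpha}\big/\|\mathcal{J}_N^{-\alpha,-\alpha}\|^2$, and the key asymptotic you assert, $\|\mathcal{J}_N^{-\alpha,-\alpha}\|^2\asymp N^{-2\alpha}\gamma_N^{\alpha,\alpha}$, is false. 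Expanding $P_N^{\alpha,\alpha}$ in the $P^{2\alpha,2\alpha}$ basis via \eqref{connect}, already the leading ($\nu=0$) coefficient is $\mathcal{O}(1)$, so $\|\mathcal{J}_N^{-\alpha,-\alpha}\|^2=\|P_N^{\alpha,\alpha}\|^2_{\omega^{2\alpha,2\alpha}}\gtrsim\gamma_N^{2\alpha,2\alpha}\asymp N^{-1}\asymp\gamma_N^{\alpha,\alpha}$; hence the quotient is only $\asymp N^{2\alpha}$. The estimate you cite from \cite[Corollary 6.2]{BDM07} bounds $\|\mathcal{J}_N^{-\alpha,-\alpha}\|^2$ from \emph{below} by $N^{-2\alpha}\gamma_N^{\alpha,\alpha}$, which bounds the Rayleigh quotient from \emph{above} by $N^{4\alpha}$ --- the wrong direction for your purpose --- and there is no matching "reverse Markov" inequality valid for $P_N^{\alpha,\alpha}$: the two factors of $N^{2\alpha}$ in the inverse inequality are saturated by different extremal polynomials, and a single Gegenbauer mode saturates only the first.

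This is precisely why the paper introduces the specially designed test function $\eta_N(x)=\frac{\Gamma(N+4\alpha+3)}{\Gamma(N+2\alpha+2)}(1-x^2)^{\alpha}P_N^{2\alpha+1,2\alpha+1}(x)$, whose expansions in both the $P^{2\alpha,2\alpha}$ and the $P^{\alpha,\alpha}$ bases (via \eqref{connect}) can be summed explicitly to give $\|\eta_N\|^2=\mathcal{O}(N^{4\alpha+2})$ and $\|D^{\alpha}\eta_N\|^2=\mathcal{O}(N^{8\alpha+2})$, hence a Rayleigh quotient of order $N^{4\alpha}$. Some ingredient of this kind --- a polynomial that simultaneously loads the high $P^{\alpha,\alpha}$ modes and concentrates near $x=\pm1$ --- is genuinely needed; your claim that "no new ingredient is needed beyond what the excerpt already supplies" does not hold. (If you were content with the literal one-sided reading $\lambda_{N+1,N}=\mathcal{O}(N^{4\alpha})$, the inverse inequality alone suffices, but both you and the paper assert the two-sided estimate, and for that your argument has a hole.)
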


\begin{proof}  Since \eqref{condM} is an immediate consequence of \eqref{lambdaN},
we only need to prove \eqref{lambdaN}.

Thanks to the Min-Max principle and Lemma \ref{entries},
\begin{align*}
 \lambda_1=&\min_{u\in H^s_*(I)}  \frac{\|D^{\alpha} u\|^2}{ \|u\|^2} \le
 \min_{u\in \mathbb{F}^{-\alpha,-\alpha}_N(\Lambda)}  \frac{\|D^{\alpha} u\|^2}{ \|u\|^2}= \lambda_{1,N},
 \\
 \lambda_{1,N} =&\,\min_{u\in \mathbb{F}^{-\alpha,-\alpha}_N(\Lambda)}  \frac{\|D^{\alpha} u\|^2}{ \|u\|^2}
 \le \frac{\|D^{\alpha} \phi_0\|^2}{ \|\phi_0\|^2} = \frac{1}{M_{00}}
 \\ =&\, \frac{2^{2\alpha+1}\Gamma(2\alpha+\frac32)\Gamma(\alpha+1)^2}{\sqrt{\pi}\, \Gamma(2\alpha+2)} = \frac{\Gamma(2\alpha+\frac32)\Gamma(\alpha+1)}{\Gamma(2\alpha+2)\Gamma(\alpha+\frac32)},
\end{align*}
where $M_{00}$
which yields that $\lambda_{1,N}=\mathcal{O}(1)$.

Next, by the inverse equality \eqref{INV}, we find that
\begin{align*}
\lambda_{N+1,N} = \max_{\mathbb{F}^{-\alpha,-\alpha}_N(\Lambda)} \frac{\|D^{\alpha} u\|^2}{\|u\|^2 } \lesssim N^{4\alpha}.
\end{align*}
To prove $\lambda_{N+1,N} = \mathcal{O}( N^{4\alpha}) $, it then suffices to verify that
\begin{align}
\label{etaN}
&\|D^{\alpha}\eta_{N}\|^2 = \mathcal{O}(N^{8\alpha+2}), \quad
\|\eta_{N}\|^2 = \mathcal{O}(N^{4\alpha+2}), & N \to +\infty.
\end{align}
where
\begin{align*}
\eta_N(x) = \frac{\Gamma(N+4\alpha+3)} {\Gamma(N+2\alpha+2)}  (1-x^2)^{\alpha}P^{2\alpha+1,2\alpha+1}_N(x)\in\mathbb{F}^{-\alpha,-\alpha}_N(\Lambda).
\end{align*}
Indeed, by \eqref{connect},
\begin{align*}
\eta_N(x)& = (1-x^2)^{\alpha}
\sum_{\nu=0}^{\lfloor N/2 \rfloor} \frac{4\, \Gamma(N-2\nu+4\alpha+1)\, (N-2\nu+2\alpha+1/2) }
{ \Gamma(N-2\nu+2\alpha+1) }
P^{2\alpha,2\alpha}_{N-2\nu}(x).
\end{align*}
Then the orthogonality \eqref{OrthP} reveals that
\begin{align*}
\|\eta_N\|^2 &=  \sum_{\nu=0}^{\lfloor N/2 \rfloor}
\frac{16\, \Gamma(N-2\nu+4\alpha+1)^2\, (N-2\nu+2\alpha+1/2)^2 \times  2^{4\alpha+1}\, \Gamma(N-2\nu+2\alpha+1)^2  }
{ \Gamma(N-2\nu+2\alpha+1)^2   \times  (2N-4\nu+4\alpha+1)\,(N-2\nu)!\, \Gamma(N-2\nu+4\alpha+1) }
\\
&=  2^{4\alpha+4}  \sum_{\nu=0}^{\lfloor N/2 \rfloor}\frac{\Gamma(N-2\nu+4\alpha+1)  (N-2\nu+2\alpha+1/2) }
{  (N-2\nu)! }
\\
&=  \mathcal{O}(N^{4\alpha+2}),
\end{align*}
where the last equality sign is derived by using  the fact  that \cite[(1.66)]{SKM93}
$$ \lim_{z\to +\infty}\frac{\Gamma(z+\alpha)}{\Gamma(z)z^{\alpha}} = 1, \qquad \alpha>0.  $$
Meanwhile,  the connection relation \eqref{connect} gives
\begin{align*}
\eta_N(x)& =  (1-x^2)^{\alpha}   \frac{ \Gamma(4\alpha+3)\Gamma(\alpha+1/2)}
{\Gamma(2\alpha+2) \Gamma(2\alpha+1) \Gamma(2\alpha+3/2)}
\\
&\times \sum_{\nu=0}^{\lfloor N/2 \rfloor}
\frac{(N-2\nu+\alpha+1/2)
 \Gamma(N-2\nu+2\alpha+1) \Gamma(N-\nu+2\alpha+3/2) \Gamma(\nu+\alpha+1)}
{ \Gamma(N-2\nu+\alpha+1) \Gamma(N-\nu+\alpha+3/2) \nu!}
P^{\alpha,\alpha}_{N-2\nu}(x),
\end{align*}
which together with the orthogonality \eqref{OrthJ} impiles
\begin{align*}
\|D^{\alpha}\eta_N\|^2 &=  \frac{2^{2\alpha} \Gamma(4\alpha+3)^2\Gamma(\alpha+1/2)^2   }
{\Gamma(2\alpha+2)^2 \Gamma(2\alpha+1)^2 \Gamma(2\alpha+3/2)^2}
\\
&\times \sum_{\nu=0}^{\lfloor N/2 \rfloor}
\frac{(N-2\nu+\alpha+1/2)\,
\Gamma(N-2\nu+2\alpha+1)^2\, \Gamma(N-\nu+2\alpha+3/2)^2\, \Gamma(\nu+\alpha+1)^2 }
{ (N-2\nu)!^2\,\Gamma(N-\nu+\alpha+3/2)^2\, \nu!^2 }
\\
&=\mathcal{O}(N^{8\alpha+2}).
\end{align*}
This gives \eqref{etaN} and the proof is now completed.
\end{proof}

We now concentrate on the error estimates for the numerical eigenvalues and eigenfunctions.
Let us  define the orthogonal projector 
$\Pi_N^{-\alpha,-\alpha}: H^{\alpha}_*(\Lambda) \rightarrow \mathbb{F}_N^{-\alpha,-\alpha}(\Lambda)$,
    such that for all $u \in H^{\alpha}_*(\Lambda)$,
\begin{equation*}
\begin{array}{l}
(D^{\alpha}(\Pi_N^{-\alpha,-\alpha}u-u),D^{\alpha}v_N)=0,\qquad  v_N \in  \mathbb{F}_N^{-\alpha,-\alpha}(\Lambda).
\end{array}
\end{equation*}
Then we introduce  the Sobolev space for any $\alpha>0$, $\nu\in \{ 0,1,\dots,\lfloor\alpha\rfloor\}$,   $m\in \NN_0, m\ge 2\nu-\lfloor\alpha\rfloor$,
\begin{align*}
\mathcal{B}_{\alpha,\nu}^{m}(\Lambda) =
\{v\in L^2_{\omega^{-\alpha,-\alpha}}(\Lambda) :\frac{d^l}{dx^l} D^{2\alpha-2\nu}v  \in L^2_{\omega^{\alpha-2\nu +l,\alpha-2\nu+l}},\  0\le l\le m  \}.
\end{align*}

\begin{lemma}\label{perror}
Assume   $\alpha>0$, $\nu\in \{ 0,1,\dots,\lfloor\alpha\rfloor\}$. Then for all $u \in \mathcal{B}_{\alpha,\nu}^{m}(\Lambda)$ with $m\ge 2\nu-\lfloor\alpha\rfloor$,  we have the following estimate,
\begin{align}
\label{Erra}
&\|D^{\alpha}(\Pi_N^{-\alpha,-\alpha} u-u)\|\lesssim N^{2\nu-\alpha-m} \Big\|\frac{d^m}{dx^m} D^{2\alpha-2\nu}u\Big\|_{\omega^{\alpha-2\nu+m,\alpha-2\nu+m}}.
\end{align}
\end{lemma}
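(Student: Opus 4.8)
The plan is to reduce \eqref{Erra} to a termwise inequality between two Jacobi coefficient sequences. By Theorem~\ref{TH:Basis} the functions $\{\mathcal{J}_i^{-\alpha,-\alpha}\}_{i\ge 0}$ form a complete orthogonal system for the inner product $(D^{\alpha}\cdot,D^{\alpha}\cdot)$, so the projector $\Pi_N^{-\alpha,-\alpha}$ simply truncates the expansion $u=\sum_{i\ge0}u_i\,\mathcal{J}_i^{-\alpha,-\alpha}$; together with the Parseval identity \eqref{Parservalu} this gives
\[
\|D^{\alpha}(\Pi_N^{-\alpha,-\alpha}u-u)\|^2=\sum_{i=N+1}^{\infty}\frac{2^{2\alpha+1}\,\Gamma(i+\alpha+1)^2}{i!^2\,(2i+2\alpha+1)}\,|u_i|^2 .
\]
(The hypothesis $u\in\mathcal{B}_{\alpha,\nu}^{m}(\Lambda)$ guarantees the right-hand side of \eqref{Erra} is finite, and a fortiori $u\in H^{\alpha}_*(\Lambda)$, so these expansions are legitimate.)

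Next I would apply $D^{2\alpha-2\nu}$ to this expansion termwise via \eqref{derpol}, and then differentiate $m$ more times using the classical Jacobi identity $\frac{d^m}{dx^m}P_n^{a,a}=\frac{\Gamma(n+m+2a+1)}{2^m\,\Gamma(n+2a+1)}\,P_{n-m}^{a+m,a+m}$. Since $m\ge 2\nu-\lfloor\alpha\rfloor$ forces $\alpha-2\nu+m\ge\alpha-\lfloor\alpha\rfloor\ge0$, the polynomials $P_{i+2\nu-m}^{\alpha-2\nu+m,\alpha-2\nu+m}$ appearing are genuine orthogonal Jacobi polynomials, and the $m$-th derivative annihilates the finitely many terms with $i+2\nu<m$, which are immaterial once $N$ is large. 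With $k$ such that $2\alpha\in[2k-1,2k+1)$ one obtains
\[
\frac{d^m}{dx^m}D^{2\alpha-2\nu}u=(-1)^{k}\,2^{2\nu-m}\sum_{i\ge\max\{0,m-2\nu\}} u_i\,\frac{\Gamma(i+2\alpha-2\nu+m+1)}{i!}\,P_{i+2\nu-m}^{\alpha-2\nu+m,\alpha-2\nu+m}(x),
\]
and, computing the $\omega^{\alpha-2\nu+m,\alpha-2\nu+m}$-weighted norm by \eqref{OrthP} (the powers of $2$ collapsing back to $2^{2\alpha+1}$ and the sign $(-1)^k$ disappearing on squaring),
\[
\Big\|\frac{d^m}{dx^m}D^{2\alpha-2\nu}u\Big\|_{\omega^{\alpha-2\nu+m,\alpha-2\nu+m}}^2=\sum_{i}\frac{2^{2\alpha+1}\,\Gamma(i+2\alpha-2\nu+m+1)\,\Gamma(i+\alpha+1)^2}{i!^2\,(i+2\nu-m)!\,(2i+2\alpha+1)}\,|u_i|^2 .
\]

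The decisive step is the termwise comparison of the two displayed series. The ratio of the $i$-th summand of the projection error to the $i$-th summand of the weighted norm is exactly $\dfrac{(i+2\nu-m)!}{\Gamma(i+2\alpha-2\nu+m+1)}=\dfrac{\Gamma(i+2\nu-m+1)}{\Gamma(i+2\alpha-2\nu+m+1)}$, which by the asymptotics $\Gamma(z+a)/\Gamma(z+b)\asymp z^{a-b}$ \cite[(1.66)]{SKM93} is $\asymp i^{\,4\nu-2m-2\alpha}$ (valid whenever $i+2\nu\ge m$, in particular for all $i>N$ with $N$ large). The standing assumption $m\ge 2\nu-\lfloor\alpha\rfloor$ yields $4\nu-2m\le 2\lfloor\alpha\rfloor\le 2\alpha$, so the exponent $4\nu-2m-2\alpha$ is nonpositive; hence this ratio is nonincreasing in $i$ and is $\lesssim N^{\,4\nu-2m-2\alpha}=N^{2(2\nu-\alpha-m)}$ for every $i>N$. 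Multiplying through by $|u_i|^2$, summing over $i>N$, and discarding the remaining nonnegative tail of the weighted-norm series then gives $\|D^{\alpha}(\Pi_N^{-\alpha,-\alpha}u-u)\|^2\lesssim N^{2(2\nu-\alpha-m)}\big\|\frac{d^m}{dx^m}D^{2\alpha-2\nu}u\big\|_{\omega^{\alpha-2\nu+m,\alpha-2\nu+m}}^2$, and \eqref{Erra} follows on taking square roots.

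I expect the only real obstacle to be the Gamma-function bookkeeping: tracking the Pochhammer/Gamma factors and the powers of $2$ so that everything cancels to leave the clean ratio $i^{\,4\nu-2m-2\alpha}$, and verifying the admissibility conditions ($\alpha-2\nu+m>-1$, $i+2\nu\ge m$ for the surviving terms, $i+2\alpha-2\nu+1>0$) that legitimise each invoked identity. No idea beyond \eqref{derpol}, the Jacobi derivative formula, and the Gamma-ratio asymptotic seems to be required.
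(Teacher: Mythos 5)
Your proposal is correct and follows essentially the same route as the paper's proof: expand $u$ in the basis $\{\mathcal{J}_i^{-\alpha,-\alpha}\}$, express both $\|D^{\alpha}(\Pi_N^{-\alpha,-\alpha}u-u)\|^2$ and $\|\frac{d^m}{dx^m}D^{2\alpha-2\nu}u\|^2_{\omega^{\alpha-2\nu+m,\alpha-2\nu+m}}$ as weighted $\ell^2$ sums of the same coefficients via \eqref{derpol}, the Jacobi derivative formula and \eqref{OrthP}, and compare the weights through the ratio $\Gamma(i+2\nu-m+1)/\Gamma(i+2\alpha-2\nu+m+1)\lesssim N^{4\nu-2m-2\alpha}$ for $i>N$. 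The paper bounds the tail by factoring out this ratio at $i=N$ and estimating the residual Pochhammer quotient by $1$, whereas you invoke monotonicity of the ratio in $i$; these are the same estimate in different clothing.
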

\begin{proof}
By \eqref{derpol}, one has
\begin{align*}
\frac{d^m}{dx^m} &D^{2\alpha-2\nu} \mathcal{J}^{-\alpha,-\alpha}_i(x)
=(-1)^{\lfloor \alpha+\frac12\rfloor} \frac{\Gamma(i+2\alpha-2\nu+1)}{2^{-2\nu} i!} \frac{d^m}{dx^m} P^{\alpha-2\nu ,\alpha-2\nu}_{i+2\nu}(x)
\\
&= (-1)^{\lfloor \alpha+\frac12\rfloor} \frac{\Gamma(i+2\alpha-2\nu+m+1)}{2^{m-2\nu} i!} P^{\alpha-2\nu+m ,\alpha-2\nu+m}_{i+2\nu-m}(x).
\end{align*}
Further by the orthogonality of the Jacobi polynomials \eqref{OrthP}, one obtains
\begin{align*}
\Big(\frac{d^m}{dx^m}& D^{2\alpha-2\nu} \mathcal{J}^{-\alpha,-\alpha}_i, \frac{d^m}{dx^m} D^{2\alpha-2\nu} \mathcal{J}^{-\alpha,-\alpha}_n\Big)_{\omega^{\alpha-2\nu+m,\alpha-2\nu+m}}
\\
&= \frac{2^{2\alpha+1} \Gamma(i+\alpha+1)^2 }{(2i+2\alpha+1)i!^2  }
\frac{\Gamma(i+2\alpha-2\nu+m+1)}{\Gamma(i+2\nu-m+1)} \delta_{in}.
\end{align*}
Thus for any $u=\sum_{i=0}^{\infty}u_i\mathcal{J}^{-\alpha,-\alpha}_i\in \mathcal{B}^{m}_{\alpha,\nu}(\Lambda)\subseteq  H_*^{\alpha}(I)
$,
\begin{align*}
\Big\|\frac{d^m}{dx^m} D^{2\alpha-2\nu}u\Big\|^2_{\omega^{\alpha-2\nu+m,\alpha-2\nu+m}}
=\sum_{i=0}^{\infty} \frac{2^{2\alpha+1} \Gamma(i+\alpha+1)^2 }{(2i+2\alpha+1)i!^2  }
\frac{\Gamma(i+2\alpha-2\nu+m+1)}{\Gamma(i+2\nu-m+1)}  |u_i|^2.
\end{align*}
Meanwhile, by the definition of $\Pi_N^{-\alpha,-\alpha}$,
it can be readily checked that
\begin{align*}
\Pi_N^{-\alpha,-\alpha} u = \sum_{i=0}^N u_i\mathcal{J}^{-\alpha,-\alpha}_i.
\end{align*}
Then one  gets from \eqref{OrthJ} that
\begin{align*}
\|D^{\alpha}(&\Pi_N^{-\alpha,-\alpha} u-u)\|^2
= \sum_{i=N+1}^{\infty} \frac{2^{2\alpha+1} \Gamma(i+\alpha+1)^2 }{(2i+2\alpha+1)i!^2  } |u_i|^2
\\
&= \frac{\Gamma(N+2\nu-m+1)} {\Gamma(N+2\alpha-2\nu+m+1)}
\\
&\quad \times \sum_{i=N+1}^{\infty} \frac{(N+2\nu-m+1)_{i-N}} {\Gamma(N+2\alpha-2\nu+m+1)_{i-N}}\frac{2^{2\alpha+1} \Gamma(i+\alpha+1)^2 }{(2i+2\alpha+1)i!^2  }
\frac{\Gamma(i+2\alpha-2\nu+m+1)}{\Gamma(i+2\nu-m+1)}  |u_i|^2
\\
&\le \frac{\Gamma(N+2\nu-m+2)} {\Gamma(N+2\alpha-2\nu+m+2)}
 \sum_{i=0}^{\infty} \frac{2^{2\alpha+1} \Gamma(i+\alpha+1)^2 }{(2i+2\alpha+1)i!^2  }
\frac{\Gamma(i+2\alpha-2\nu+m+1)}{\Gamma(i+2\nu-m+1)}  |u_i|^2
\\
&\lesssim N^{4\nu-2m-2\alpha}
\Big\|\frac{d^m}{dx^m} D^{2\alpha-2\nu}u\Big\|^2_{\omega^{\alpha-2\nu+m,\alpha-2\nu+m}},
\end{align*}
where the following estimate has been used for the first inequality sign,
\begin{align*}
\frac{(N+2\nu-m+1)_{i-N}} {\Gamma(N+2\alpha-2\nu+m+1)_{i-N}} \le 1, \qquad \alpha+m-2\nu \ge 0.
\end{align*}
This finally completes the proof.
\end{proof}

Recall that
$a(\cdot,\cdot)$ is symmetric, continuous
and coercive on  $H^{\alpha}_*(\Lambda)\times H^{\alpha}_*(\Lambda)$,
 $b(\cdot,\cdot)$  is continuous on
$L^2(\Lambda)\times L^2(\Lambda)$,  and $H^{\alpha}_*(\Lambda)$
is compactly imbedded in $L^2(\Lambda)$.
Thus,
by the approximation theory of  Babu\v{s}ka and Osborn on
the Ritz method for self-adjoint and positive-definite  eigenvalue problems
\cite[pp. 697-700]{BO91},
we now arrive at the following main theorem.
\begin{theorem}
Let $\lambda_j$ be an eigenvalue of \eqref{eigp1} with the geometric multiplicity $q$  and assume that
$\lambda_j=\lambda_{j+1}=\dots=\lambda_{j+q-1}$. Denote
   \begin{align*}
  E(\lambda_j) := \left\{ \psi \text{ is an eigenfunction corresponding to } \lambda_j    \text{ with } \|D^{\alpha}\psi\|=1     \right\} .
\end{align*}
Further suppose $\psi\in \mathcal{B}_{\alpha,\nu}^m(\Lambda)$ with $\nu\in \{0,1,\dots,\lfloor \alpha\rfloor\}$ and  $m\ge 2\nu-\alpha$ for any $\psi\in E(\lambda_j)$.
  It  holds that
   \begin{align*}
           & 0 \le \lambda_{i,N}-\lambda_j \lesssim N^{4\nu-2\alpha-2m} \sup_{\psi\in E(\lambda_j)} \Big\|\frac{d^m}{dx^m} D^{2\alpha-2\nu}\psi\Big\|_{\omega^{\alpha-2\nu+m,\alpha-2\nu+m}}^2, \quad  i=j,j+1,\dots,j+q-1.
   \end{align*}

    Let  $\psi_{i,N}$ be an  eigenfunction  corresponding to $\lambda_{i,N}$
    such that $\|D^{\alpha}\psi_{i,N}\|=1$.  Then for $i=j,j+1,\dots,j+q-1$, there holds that
  \begin{align*}
           \inf_{u\in E(\lambda_j)} \|u-\psi_{i,N}\|_{\alpha,*} \lesssim  N^{2\nu-\alpha-m}  \sup_{\psi\in E(\lambda_j)}  \Big\|\frac{d^m}{dx^m} D^{2\alpha-2\nu}\psi\Big\|_{\omega^{\alpha-2\nu+m,\alpha-2\nu+m}}
.
   \end{align*}
    Moreover, for  $\psi\in E(\lambda_j)$, there exists a function  $v_N\in \mathrm{span}\{ \psi_{j,N},\dots,\psi_{j+q-1,N} \} $ such that
  \begin{align*}
         \|\psi-v_{N}\|_{\alpha,*} \lesssim   N^{2\nu-\alpha-m}  \sup_{\psi\in E(\lambda_j)}  \Big\|\frac{d^m}{dx^m} D^{2\alpha-2\nu}\psi\Big\|_{\omega^{\alpha-2\nu+m,\alpha-2\nu+m}}
.
   \end{align*}

\end{theorem}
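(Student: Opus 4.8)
The plan is to invoke the abstract spectral approximation theory of Babu\v{s}ka and Osborn \cite[pp.~697--700]{BO91} for the Ritz method applied to self-adjoint, positive-definite eigenvalue problems, and to convert its general conclusions into the explicit convergence rates above by plugging in the projection error estimate of Lemma \ref{perror}. First I would verify that the hypotheses of that theory hold in our setting: $a(\cdot,\cdot)$ is symmetric, continuous and coercive on $H^\alpha_*(\Lambda)\times H^\alpha_*(\Lambda)$; $b(\cdot,\cdot)$ is continuous (and symmetric, positive) on $L^2(\Lambda)\times L^2(\Lambda)$; and $H^\alpha_*(\Lambda)\hookrightarrow L^2(\Lambda)$ compactly by Theorem \ref{embedding}. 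These are precisely the ingredients stated just before the theorem. Hence the solution operator $T:L^2(\Lambda)\to H^\alpha_*(\Lambda)$ defined by $a(Tf,v)=b(f,v)$ is compact and self-adjoint in the $a$-inner product, its nonzero eigenvalues are $1/\lambda_j$, and the Galerkin eigenvalues $\lambda_{i,N}$ arise from the compressed operator $T_N = P_N T$ with $P_N$ the $a$-orthogonal projector onto $\mathbb{F}_N^{-\alpha,-\alpha}(\Lambda)$, which is exactly $\Pi_N^{-\alpha,-\alpha}$.

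Next I would recall the two master estimates from \cite{BO91}: writing $E(\lambda_j)$ for the unit ($\|D^\alpha\cdot\|=1$) eigenfunctions associated with $\lambda_j$, and $\varepsilon_N := \sup_{\psi\in E(\lambda_j)} \|\psi - \Pi_N^{-\alpha,-\alpha}\psi\|_{\alpha,*}$ for the best-approximation quantity that controls everything, one has for each $i\in\{j,\dots,j+q-1\}$
\begin{align*}
0 \le \lambda_{i,N}-\lambda_j \lesssim \varepsilon_N^2, \qquad
\inf_{u\in E(\lambda_j)}\|u-\psi_{i,N}\|_{\alpha,*} \lesssim \varepsilon_N,
\end{align*}
and for every $\psi\in E(\lambda_j)$ there is $v_N\in\mathrm{span}\{\psi_{j,N},\dots,\psi_{j+q-1,N}\}$ with $\|\psi-v_N\|_{\alpha,*}\lesssim \varepsilon_N$. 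The eigenvalue bound is the standard quadratic ``$\lambda_{i,N}-\lambda_j \asymp \|(T-T_N)|_{E}\|^2 + \text{lower order}$'' statement, and the non-negativity $\lambda_{i,N}\ge\lambda_j$ is just the min-max monotonicity since $\mathbb{F}_N^{-\alpha,-\alpha}(\Lambda)\subset H^\alpha_*(\Lambda)$ (already used in the proof of Theorem \ref{COND}). The only genuinely quantitative input still needed is a bound on $\varepsilon_N$.

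That bound is supplied directly by Lemma \ref{perror}: since the semi-norm $\|\cdot\|_{\alpha,*}$ satisfies $\|v\|_{\alpha,*}\asymp |v|_{\alpha,*}=\|D^\alpha v\|$ by \eqref{SemiNorm} and \eqref{normH00} (using that $\Pi_N^{-\alpha,-\alpha}u-u$ lies in $H_*^\alpha(\Lambda)$ and that its weighted $L^2$-part is dominated by its $\alpha$-seminorm via the Poincar\'e inequality, Proposition \ref{poincare}, applied to $\Pi_N^{-\alpha,-\alpha}u - u$), for any $\psi\in\mathcal{B}_{\alpha,\nu}^m(\Lambda)\cap E(\lambda_j)$ we get
\begin{align*}
\|\psi-\Pi_N^{-\alpha,-\alpha}\psi\|_{\alpha,*} \asymp \|D^\alpha(\psi-\Pi_N^{-\alpha,-\alpha}\psi)\| \lesssim N^{2\nu-\alpha-m}\Big\|\frac{d^m}{dx^m}D^{2\alpha-2\nu}\psi\Big\|_{\omega^{\alpha-2\nu+m,\alpha-2\nu+m}}.
\end{align*}
Taking the supremum over $\psi\in E(\lambda_j)$ gives $\varepsilon_N\lesssim N^{2\nu-\alpha-m}\sup_{\psi\in E(\lambda_j)}\|\tfrac{d^m}{dx^m}D^{2\alpha-2\nu}\psi\|_{\omega^{\alpha-2\nu+m,\alpha-2\nu+m}}$; substituting into the three abstract estimates yields the eigenvalue rate $N^{4\nu-2\alpha-2m}$ and the eigenfunction rate $N^{2\nu-\alpha-m}$ exactly as claimed. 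The main obstacle I anticipate is a careful bookkeeping one rather than a conceptual one: one must check that the genericity/separation hypothesis in Babu\v{s}ka--Osborn is met (for $N$ large enough the cluster of $q$ discrete eigenvalues near $\lambda_j$ is well-separated from the rest of the spectrum, which follows from $T_N\to T$ in operator norm, itself a consequence of $\varepsilon_N\to0$), and that the norm on $E(\lambda_j)$ used to normalize eigenfunctions ($\|D^\alpha\psi\|=1$) is compatible with the $a$-inner product normalization used in \cite{BO91}; both are routine, since on the finite-dimensional eigenspace $E(\lambda_j)$ the norms $\|D^\alpha\cdot\|$, $\|\cdot\|_{\alpha,*}$ and $\|\cdot\|$ are all equivalent.
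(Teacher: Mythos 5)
Your proposal is correct and follows essentially the same route as the paper: the paper likewise verifies the symmetry, continuity, coercivity of $a(\cdot,\cdot)$, the continuity of $b(\cdot,\cdot)$, and the compact embedding $H^{\alpha}_*(\Lambda)\hookrightarrow L^2(\Lambda)$, and then invokes the abstract Babu\v{s}ka--Osborn estimates together with the projection bound of Lemma \ref{perror} to read off the rates. Your write-up is in fact somewhat more explicit than the paper's (which leaves the substitution of $\varepsilon_N$ and the norm-equivalence bookkeeping implicit), but there is no difference in substance.
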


\section{Numerical results}
\label{sec:results}
In this section, we will give some numerical results to illustrate  the accuracy and
efficiency of the  Jacobi-Galerkin spectral method for the Riesz fractional differential
eigenvalue problems on the reference domain and validate other  theoretical results related.

\begin{example}
 Fractional derivative order $2\alpha\in(0,2]$.
\end{example}
We   begin with $2\alpha\in(0,2]$.  This example  includes
three parts. The first test is to verify the spectral accuracy, while another two tests are to confirm the
Weyl-type asymptotic law  and the asymptotic  behavior of the linear algebraic system's condition number respectively.
\subsection{Accuracy and convergence test -- algebraic order or exponential order?}

The  five leading eigenvalues obtained by our Jacobi-Galerkin spectral method with $N=64$,  are  listed  in Table \ref{tab} for different $2\alpha$.  They are at least 10 digits accurate,
with significant places being  estimated from  the results  with $N=48$. This demonstrates the
accuracy of  our proposed  method. In Table \ref{eig3}, we report the   first three eigenvalues by Jacobi-Galerkin spectral method with $N=64$ for other various  $2\alpha$ and   the numerical approximations to $\lambda_n$ provided by the low order method in Ref. \cite{ZRK07} with  the total degrees of freedom $5000$. We can observe that they are in good  agreement.
\begin{table}[htp]
\centering
\caption{The 5 leading eigenvalues for each $2\alpha$,
computed with $N=64$, listed with significant places estimated from the $N=48$ results.} \vspace{2mm}
\footnotesize
\begin{tabular}{ |l |l|l|l|l|} \hline
$2\alpha=1.2$&$2\alpha=1.4$&$2\alpha=1.6$&$2\alpha=1.8$&$2\alpha=2.0$ \\ \hline
1.29699577674  &1.48323343195   &1.7282959570964  &2.048734983129 &2.467401100272 \\ \hline
3.4867305364   &4.45817398389    &5.75634828003  &7.50311692608 &9.86960440108 \\ \hline
5.911679975    &8.1507167266    &11.31189330097  &15.799894163321 & 22.2066099024  \\ \hline
8.534441423    &12.424353637    &18.1773428791  &26.724243284906 &39.47841760435 \\ \hline
11.29243001    &17.162347657    &26.1872040516  &40.11423380506  &61.68502750680  \\ \hline
\end{tabular}
\label{tab}
\end{table}

\begin{table}[htp]
\centering
\caption{Comparison of the first three eigenvalues by Jacobi-Galerkin spectral method with $N=64$ and numerical approximations to $\lambda_n$ obtained by the method of
Ref.\cite{ZRK07} with $5000\times 5000$ matrices.} \vspace{2mm}
\begin{tabular}{ |l |l|l|l|l|r|r|} \hline
 & \multicolumn{2}{|c|}{$\lambda_1$}& \multicolumn{2}{|c|}{$\lambda_2$}& \multicolumn{2}{|c|}{$\lambda_3$} \\ \hline
$2\alpha$ &Present&Ref.\cite{ZRK07}&Present&Ref.\cite{ZRK07}&Present&Ref.\cite{ZRK07} \\ \hline
0.01 &0.9966&0.997&  1.0087&1.009&  1.0137& 1.014\\
0.1  &0.9725&0.973&  1.0921&1.092&  1.1473& 1.148\\
0.2  &0.9574&0.957&  1.1965&1.197&  1.3190& 1.320\\
0.5  &0.9701&0.970&  1.6015&1.601&  2.0288& 2.031 \\
1.0  &1.1577&1.158&  2.7547&2.754&  4.3168& 4.320 \\
1.5  &1.5975&1.597&  5.0597&5.059&  9.5943& 9.957\\
1.8  &2.0487&2.048&  7.5031&7.501& 15.7998&15.801\\
1.9  &2.2440&2.243&  8.5957&8.593& 18.7168&18.718\\
1.99 &2.4436&2.2442& 9.7331&9.729& 21.8286&21.829\\ \hline
\end{tabular}
\label{eig3}
\end{table}

To investigate the convergence order of our method,  we utilize MAPLE  with a precision of 30 digits
to establish  an even  more accurate computational environment.
Taking the eigenvalues computed with $N=200$ as the reference  eigenvalues, the errors of the first eigenvalue versus polynomial degree $N$ have been plotted in Figs.\,\ref{error2} and \ref{error1} with various $2\alpha$ in logarithm-logarithm scale (left) and semi-logarithm (center). The down-bending curves in the logarithm-logarithm plots  indicate that
the errors decay faster than algebraically as  $N$ increases, and  
asymptotically exponential orders of convergence   for fractional differential order $2\alpha$ can   be observed  in the semi-logarithm plot for large $N.$ Indeed, the log-log plot 
of $\log(\lambda_{1,N}-\lambda_1)$ (right) reveals that our Jacobi-Galerkin spectral method 
converges at a sub-geometric rate $\mathcal{O}(\exp(-qN^r))$ with $0<r<1$ for a given $\alpha\not \in \NN_0$.

Clearly, the first eigenvalue grows exponentially as   $2\alpha$ increases as displayed in the left side of  Fig.\,\ref{First}, and it   converges algebraically to
the first Laplacian eigenvalue  as  $2\alpha$ approaches $2$ (see the right side of  Fig.\,\ref{First}).
These also confirm the asymptotically exponential order of convergence of our method.
By the way, the dashed line for the Gamma function  $\Gamma(2\alpha+1)$  in the left side of Fig.\,\ref{First} shows the lower bound of the numerical eigenvalues,
which is in agreement to with the Poincar\'{e} inequality in  Proposition \ref{poincare}.

\begin{figure}[h!bt]
\begin{center}
\includegraphics[width=0.32\textwidth,angle=0]{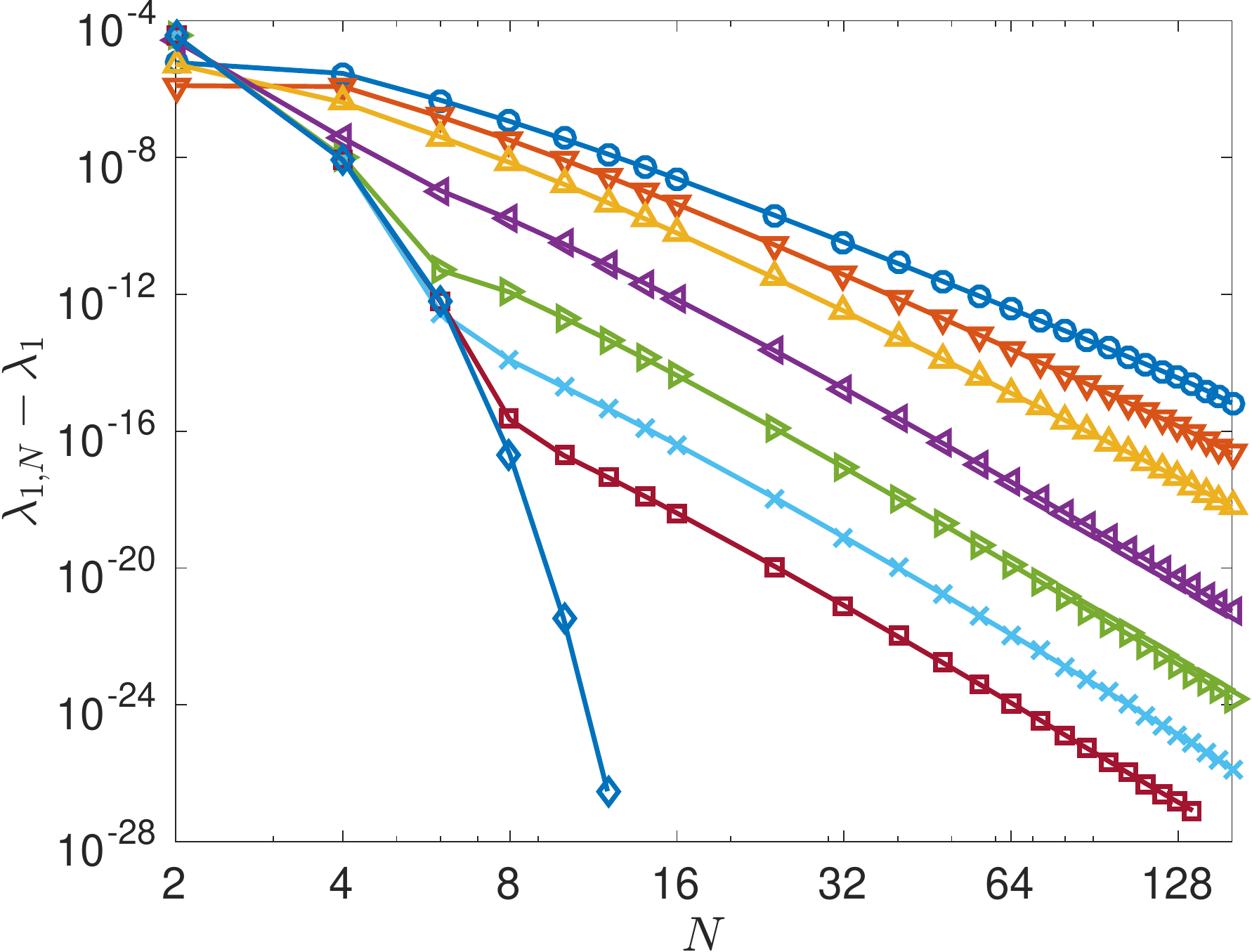}
\includegraphics[width=0.32\textwidth,angle=0]{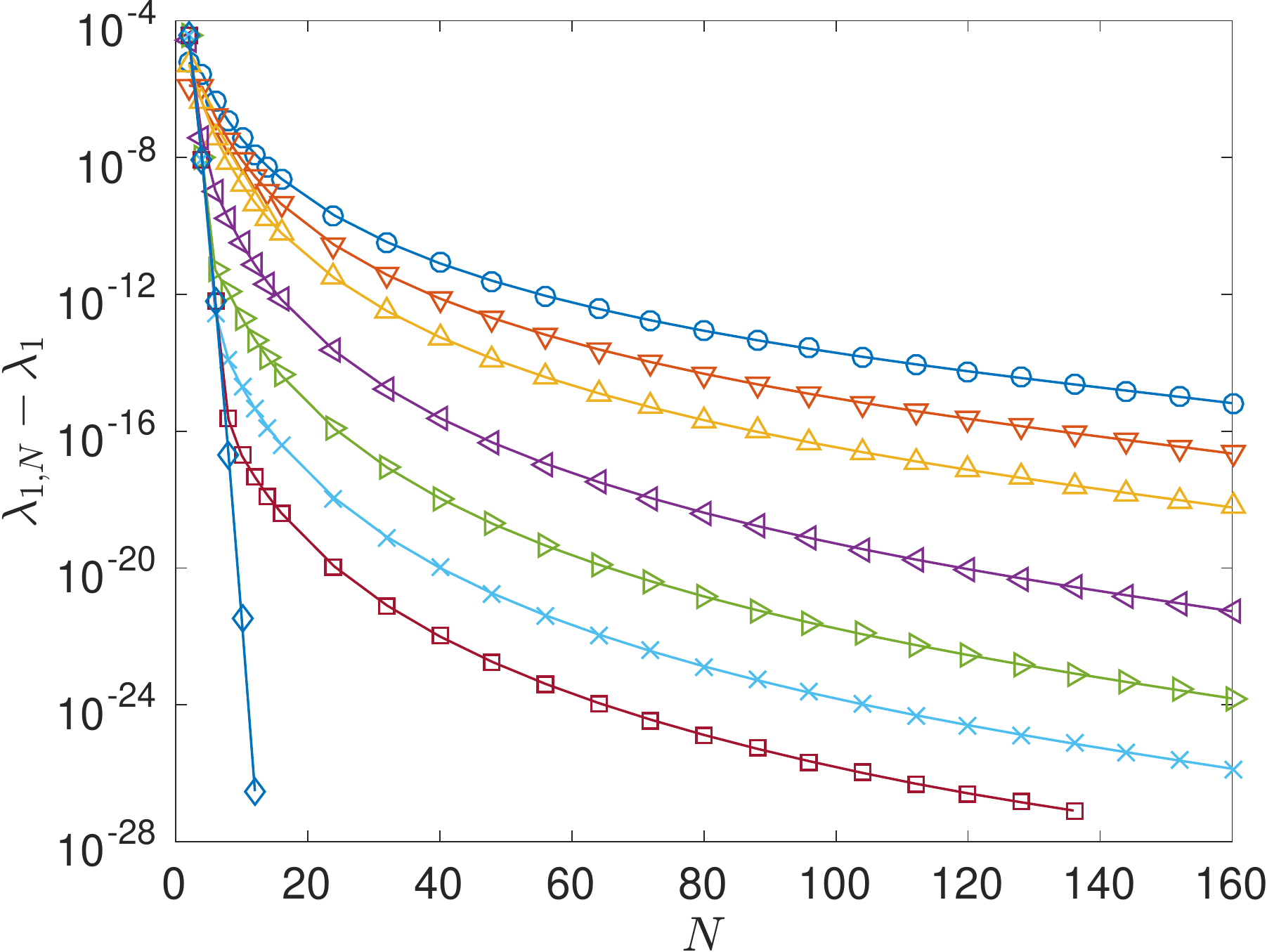}
\includegraphics[width=0.3\textwidth,angle=0]{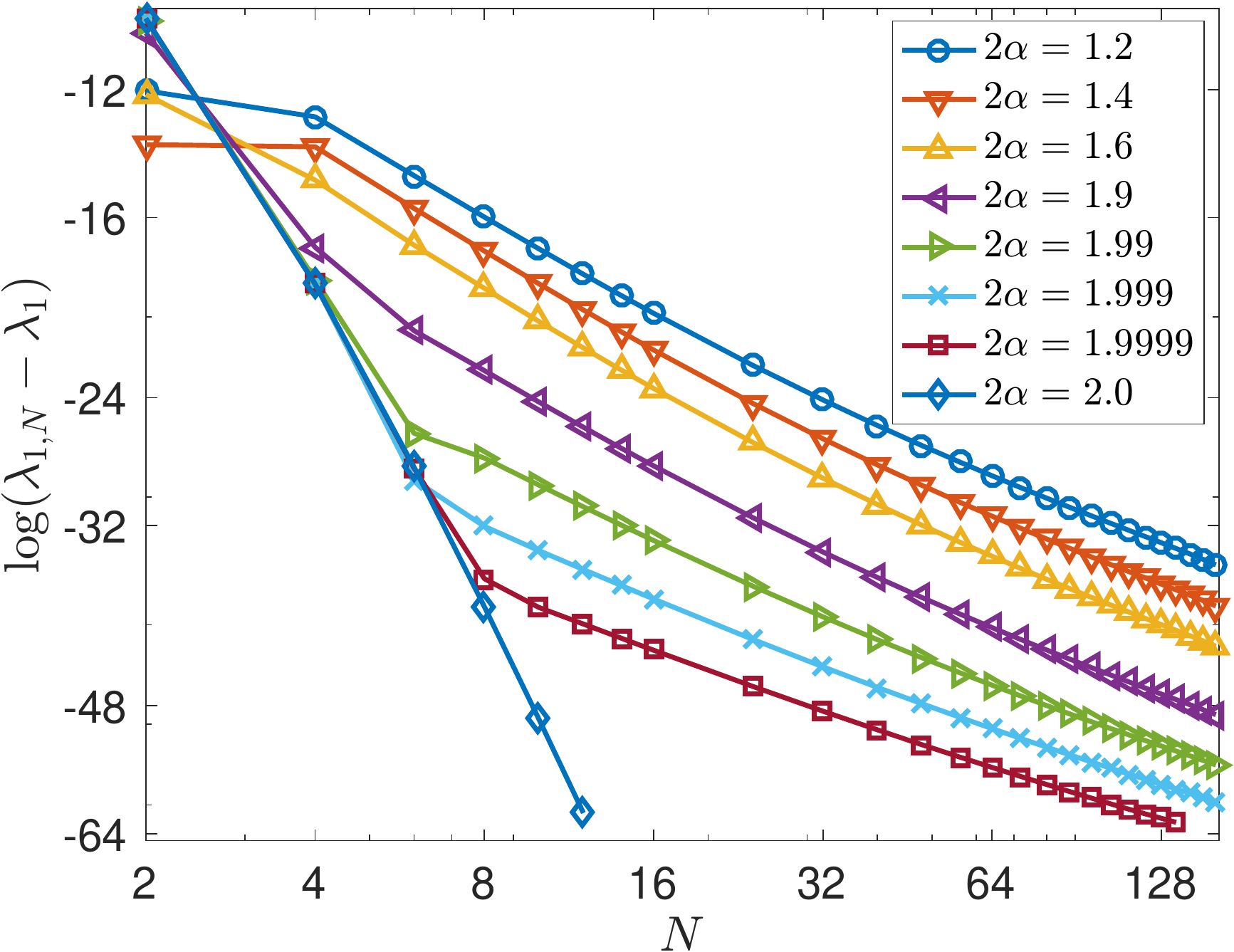}
\end{center}
\caption{Errors of the first eigenvalue versus polynomial degree $N$  with various fractional orders. Left: $\lambda_{1,N}-\lambda_1$ in logarithm-logarithm scale; Center:  $\lambda_{1,N}-\lambda_1$  in semi-logarithm scale; Right: 
$\log(\lambda_{1,N}-\lambda_1)$ in logarithm-logarithm scale.}
\label{error2}
\end{figure}

\begin{figure}[h!bt]
\begin{center}
\includegraphics[width=0.32\textwidth,angle=0]{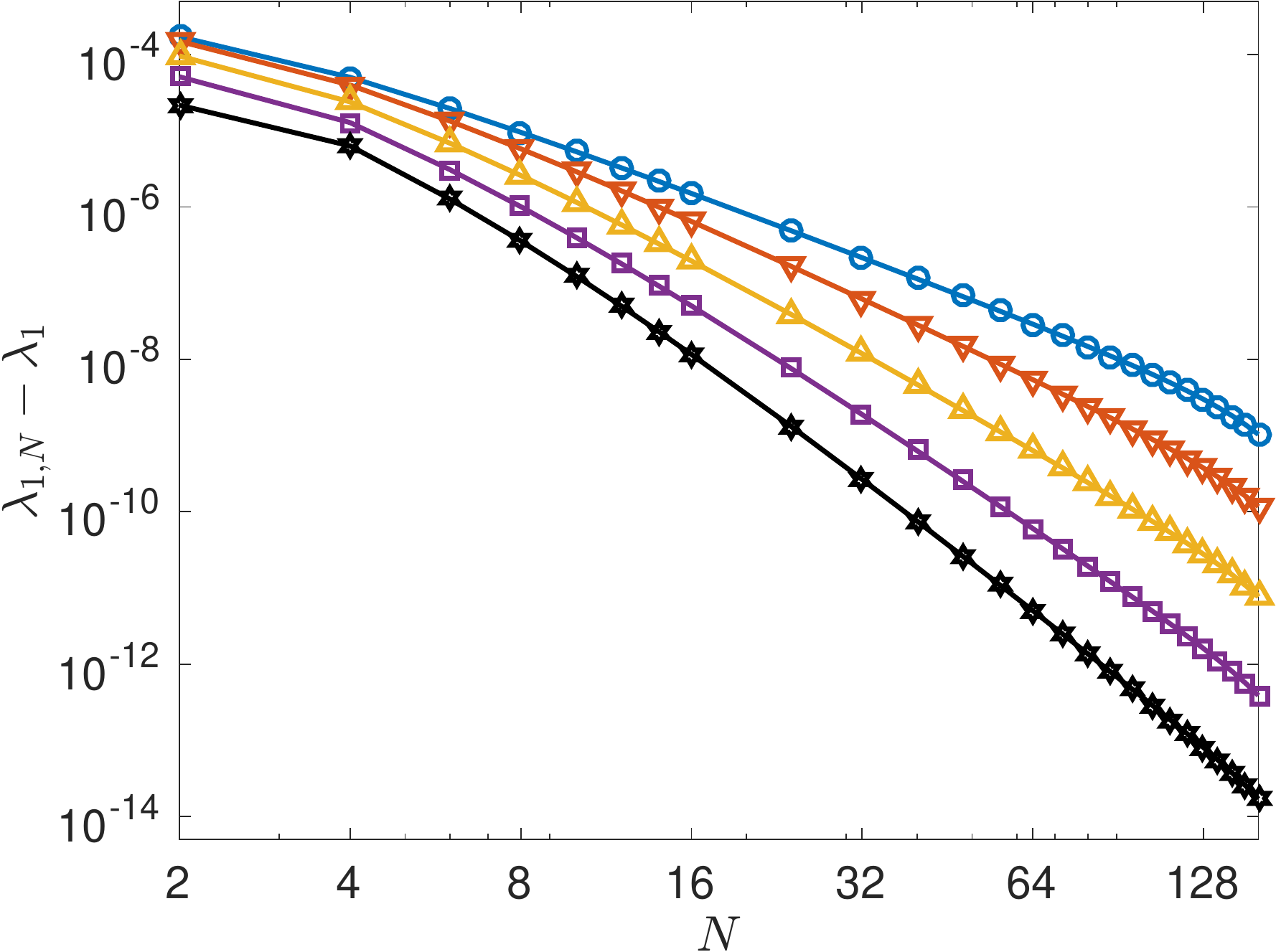}
\includegraphics[width=0.32\textwidth,angle=0]{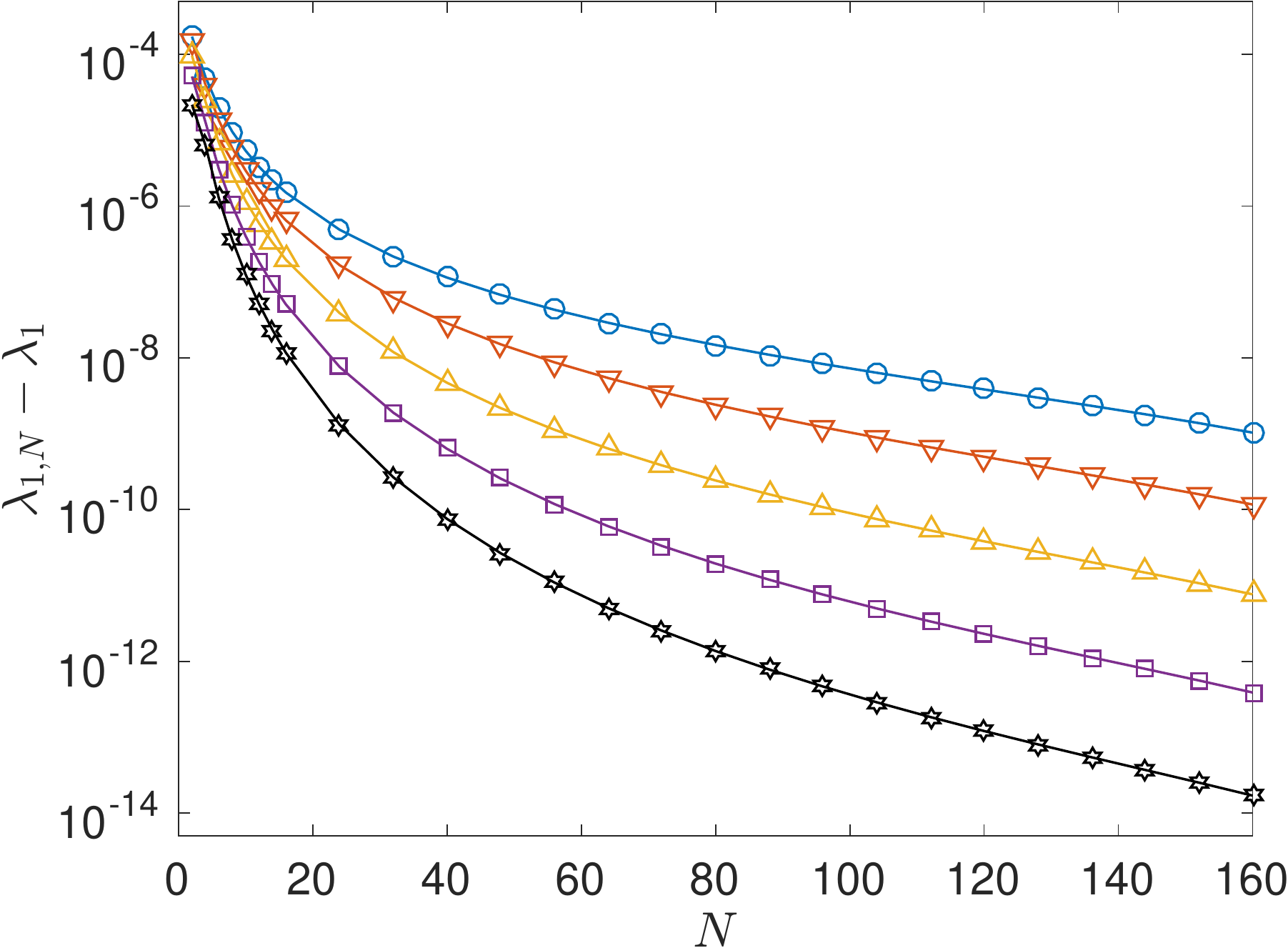}
\includegraphics[width=0.3\textwidth,angle=0]{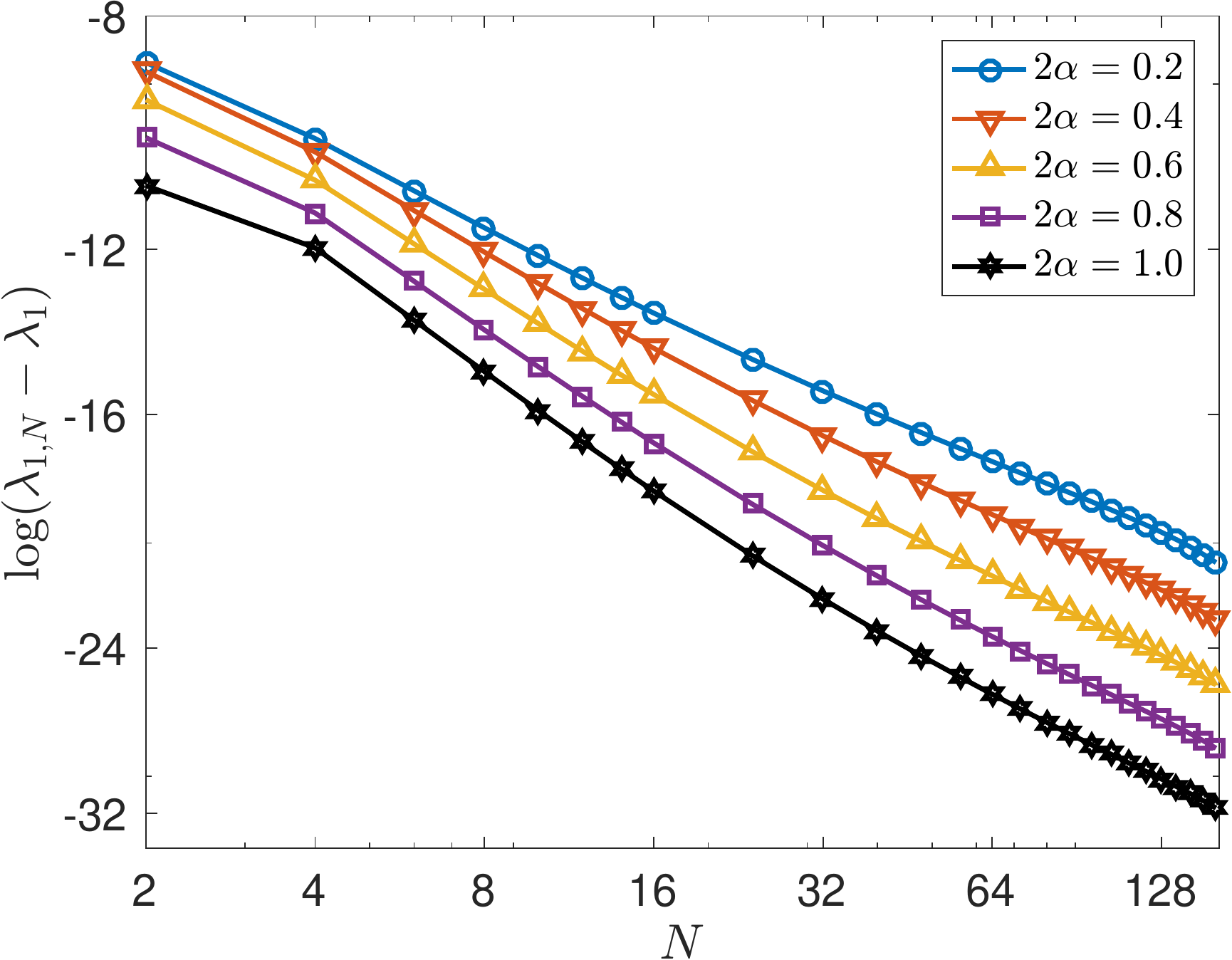}
\end{center}
\caption{Errors of the first eigenvalue versus polynomial degree $N$  with various fractional orders. Left: $\lambda_{1,N}-\lambda_1$ in logarithm-logarithm scale; Center:  $\lambda_{1,N}-\lambda_1$  in semi-logarithm scale; Right: 
$\log(\lambda_{1,N}-\lambda_1)$ in logarithm-logarithm scale.}

\label{error1}
\end{figure}

\vspace*{1em}

\begin{figure}[h!bt]
\hfill\includegraphics[width=0.35\textwidth,angle=0]{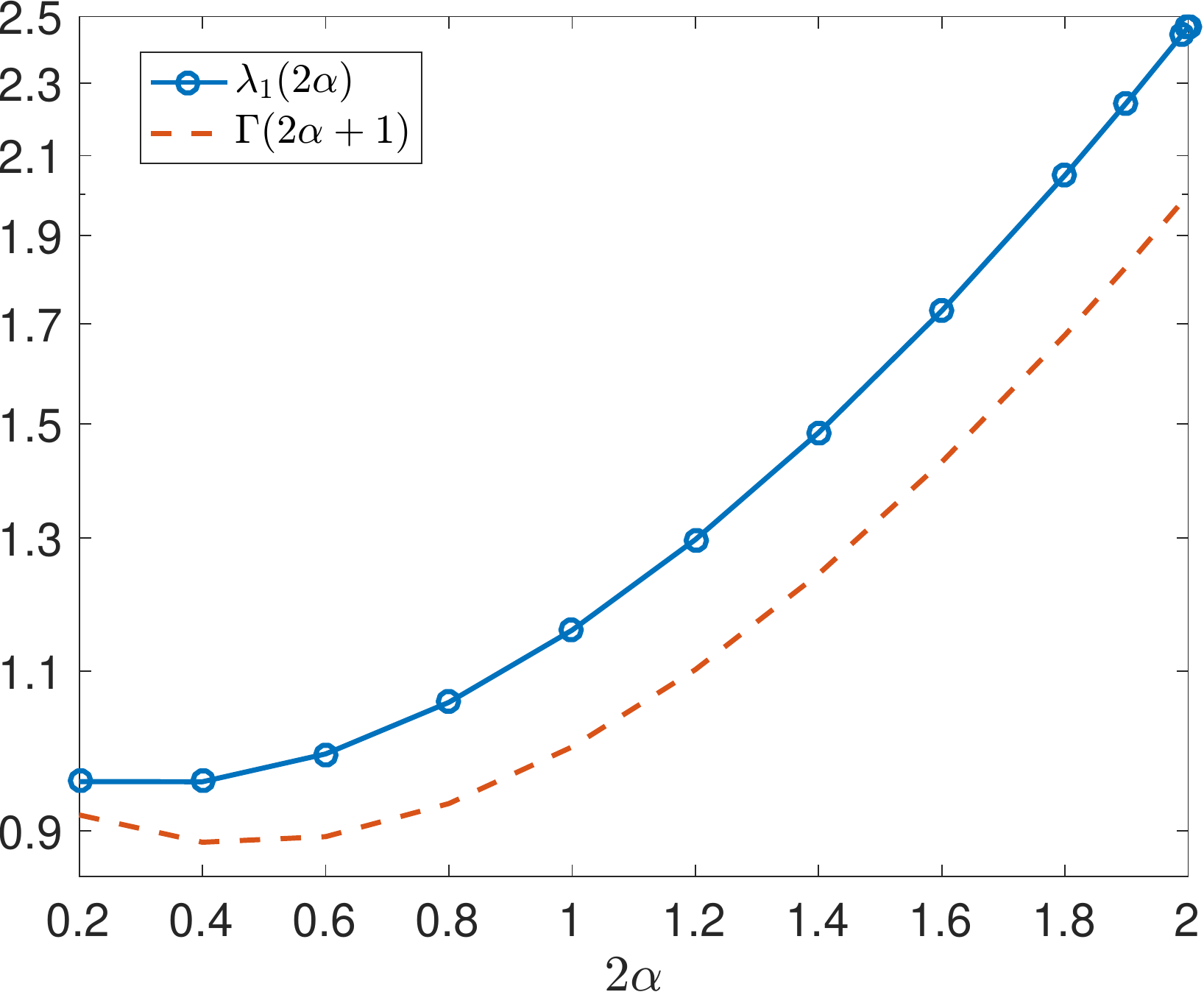}\hfill%
\includegraphics[width=0.35\textwidth,angle=0]{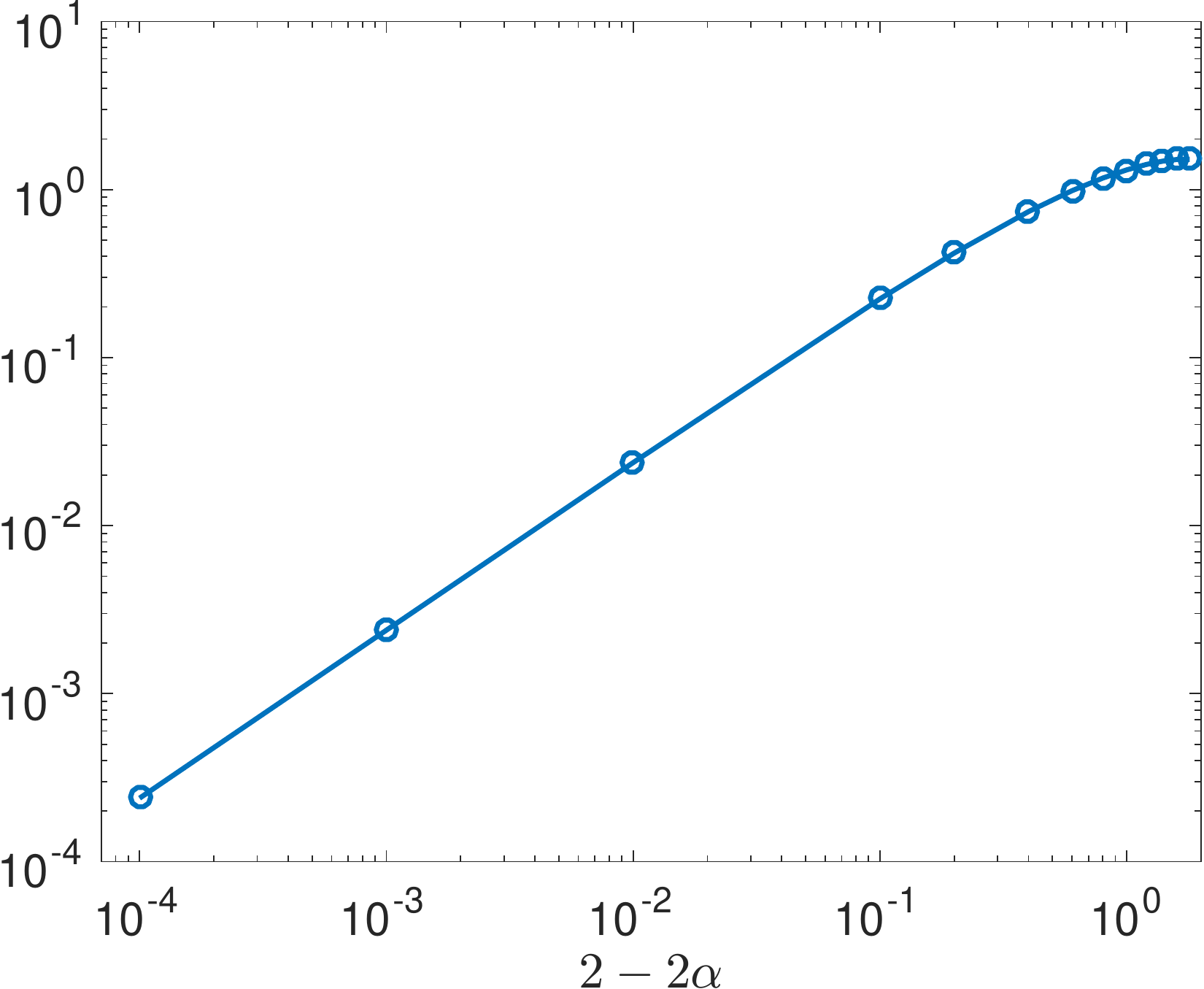}%
\hspace*{\fill}
\caption{Left: The first eigenvalue and the reference function $\Gamma(2\alpha+1)$ for different fractional partial order $2\alpha$ from $0.2$ to $2.0$ in semi-logarithm scale.
Right: $\lambda_1(2)-\lambda_1(2\alpha)$ versus $2-2\alpha$ in logarithm-logarithm scale.}
\label{eigmin}
\label{First}
\end{figure}

\begin{figure}[h!bt]
\hspace*{\fill}%
\includegraphics[width=0.335\textwidth,angle=0]{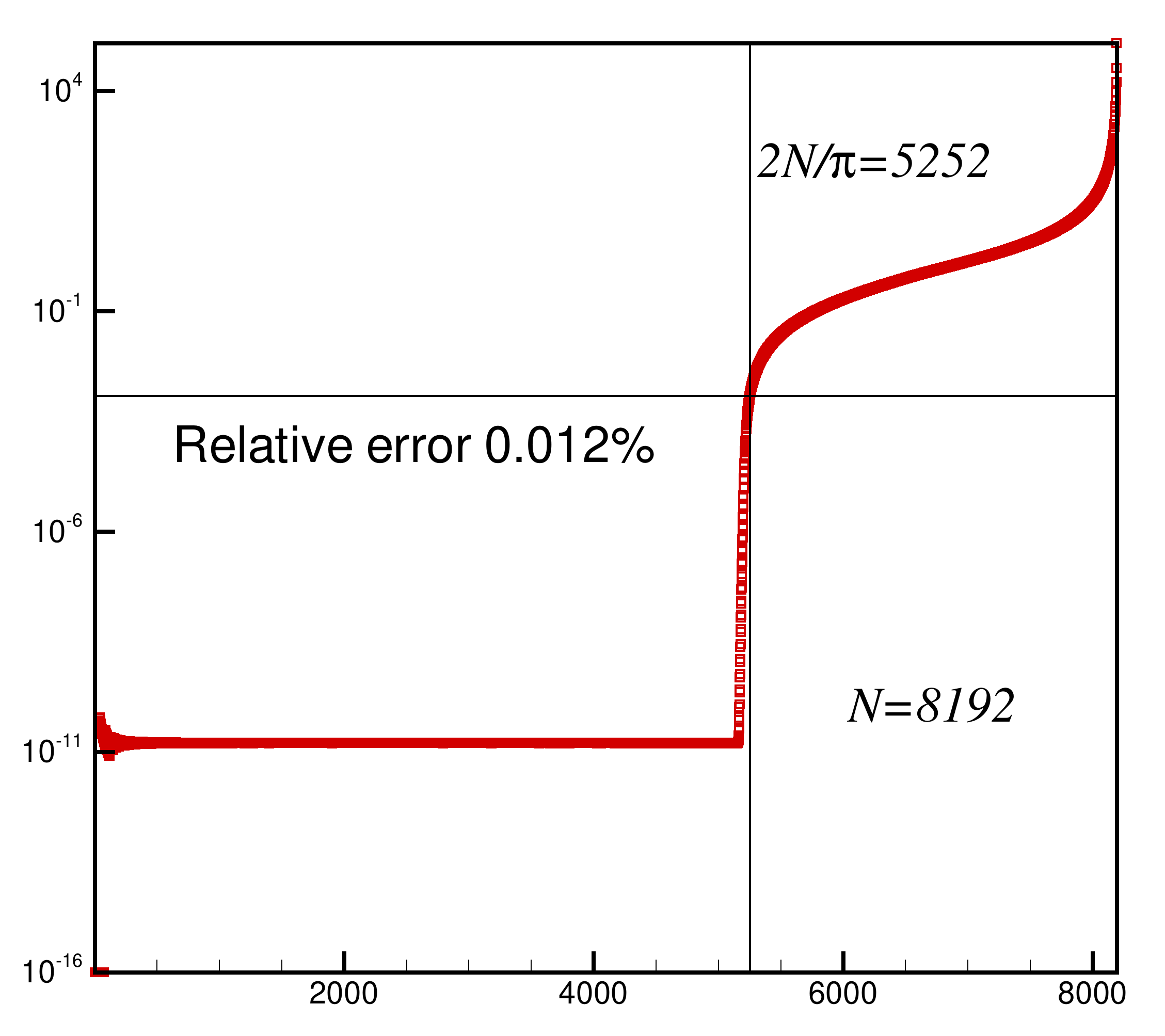}\hspace*{\fill}%
\includegraphics[width=0.38\textwidth,angle=0]{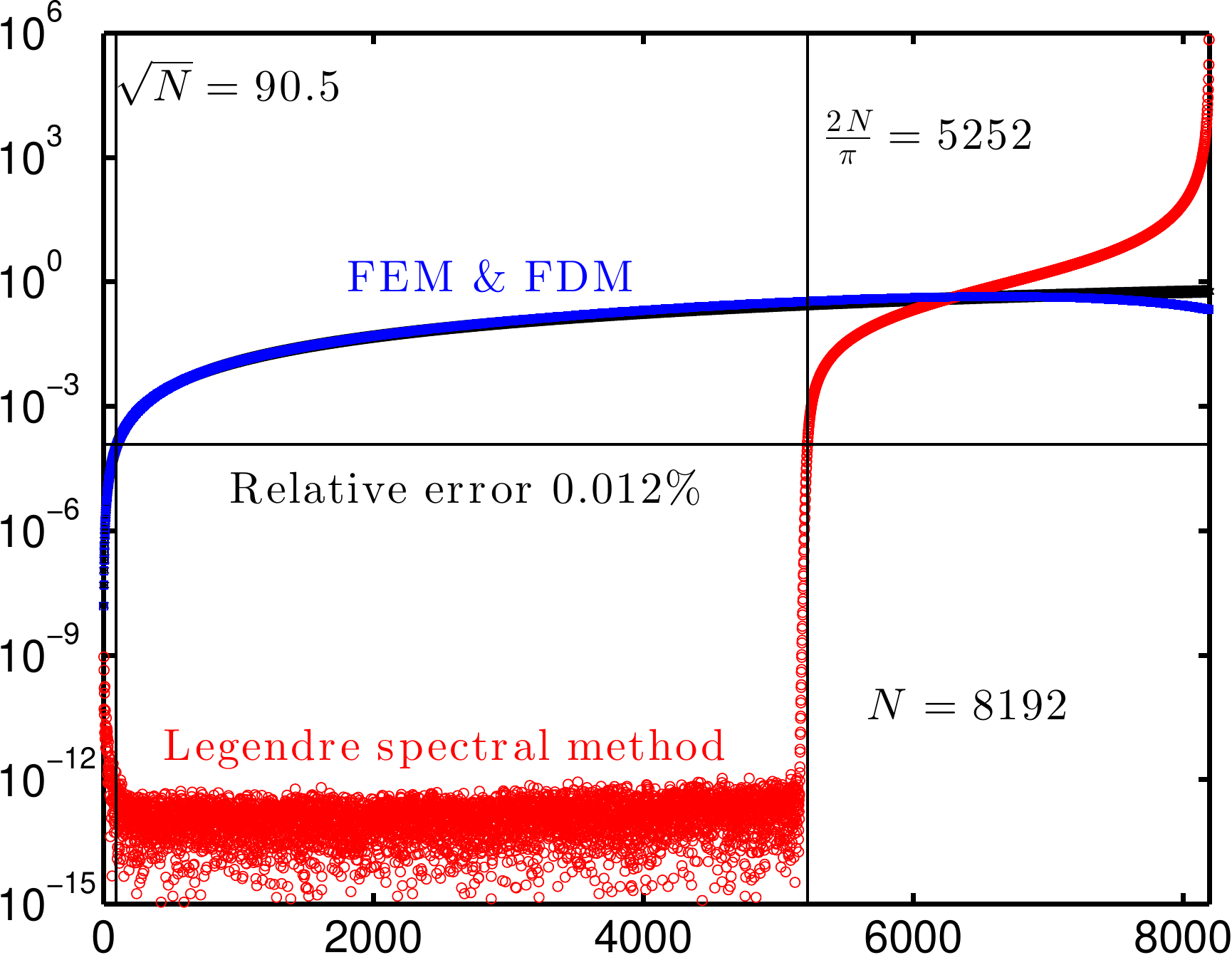}%
\hspace*{\fill}
\caption{Reliable eigenvalues and their relative errors. Left: $2\alpha=1.6$; Right: $2\alpha=2$.}
\label{erroreig}
\end{figure}
We plot  in Fig.\,\ref{erroreig} (left)  the relative errors for $8192$ numerical eigenvalues
for $2\alpha=1.6$ by applying the Jacobi-Galerkin spectral method with the polynomial degree  $N=8192$, where the errors have been evaluated by comparing these numerical eigenvalues with  the reference solution by $N=10240$.
The error curve of the spectral method cuts the intersection
of the horizonal line $y = 0.012 \%$ and the vertical line $x=2N/\pi $.  This observation confirms that there are about $2N/\pi$ numerical eigenvalues are reliable, for which the relative error converges at
a rate  of at least $\mathcal{O}(N^{-1})$. To make a comparison with the Legendre spectral method
for Laplacian eigenvalues, we excerpt a similar plot from \cite{Zhang15} on the right side, which is actually   provided by the third author of this paper. It indicates that the Jacobi-Galerkin spectral method
for  Riesz fractional eigenvalues behaves asymptotically  as the same as the  Legendre spectral for Laplacian eigenvalues, which strongly supports our hypothesis on the asymptotically exponential order
of convergence.

Finally, we plot in Fig.\,\ref{vector}  the first three eigenvectors with different fractional
 order $2\alpha=1.2, 1.6, 2.0$ and $N=32$. Here the eigenvectors are all normalized by $L^2$-norm.
\begin{figure}[h!bt]
\begin{center}
\includegraphics[width=0.32\textwidth,angle=0]{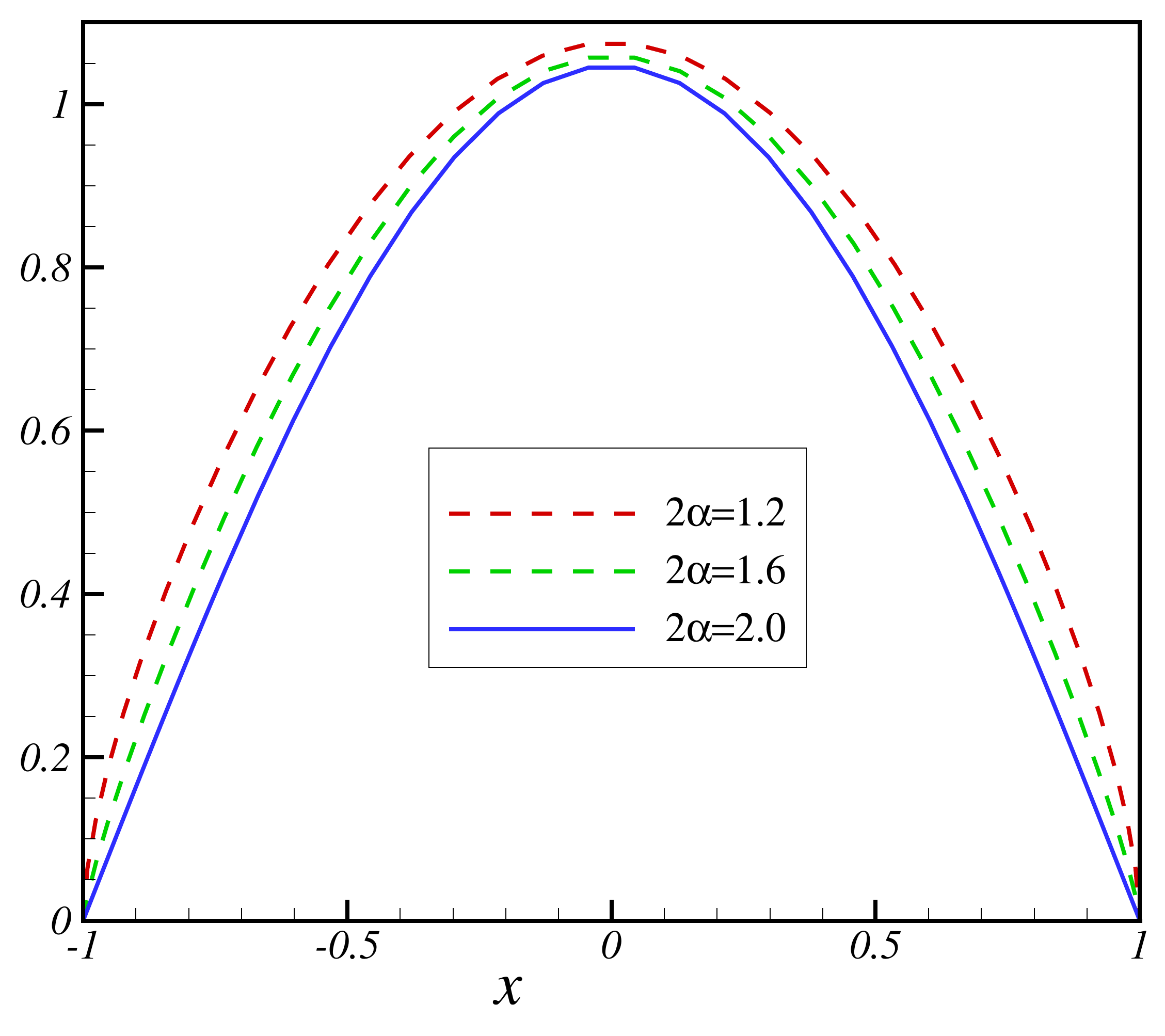}
\includegraphics[width=0.32\textwidth,angle=0]{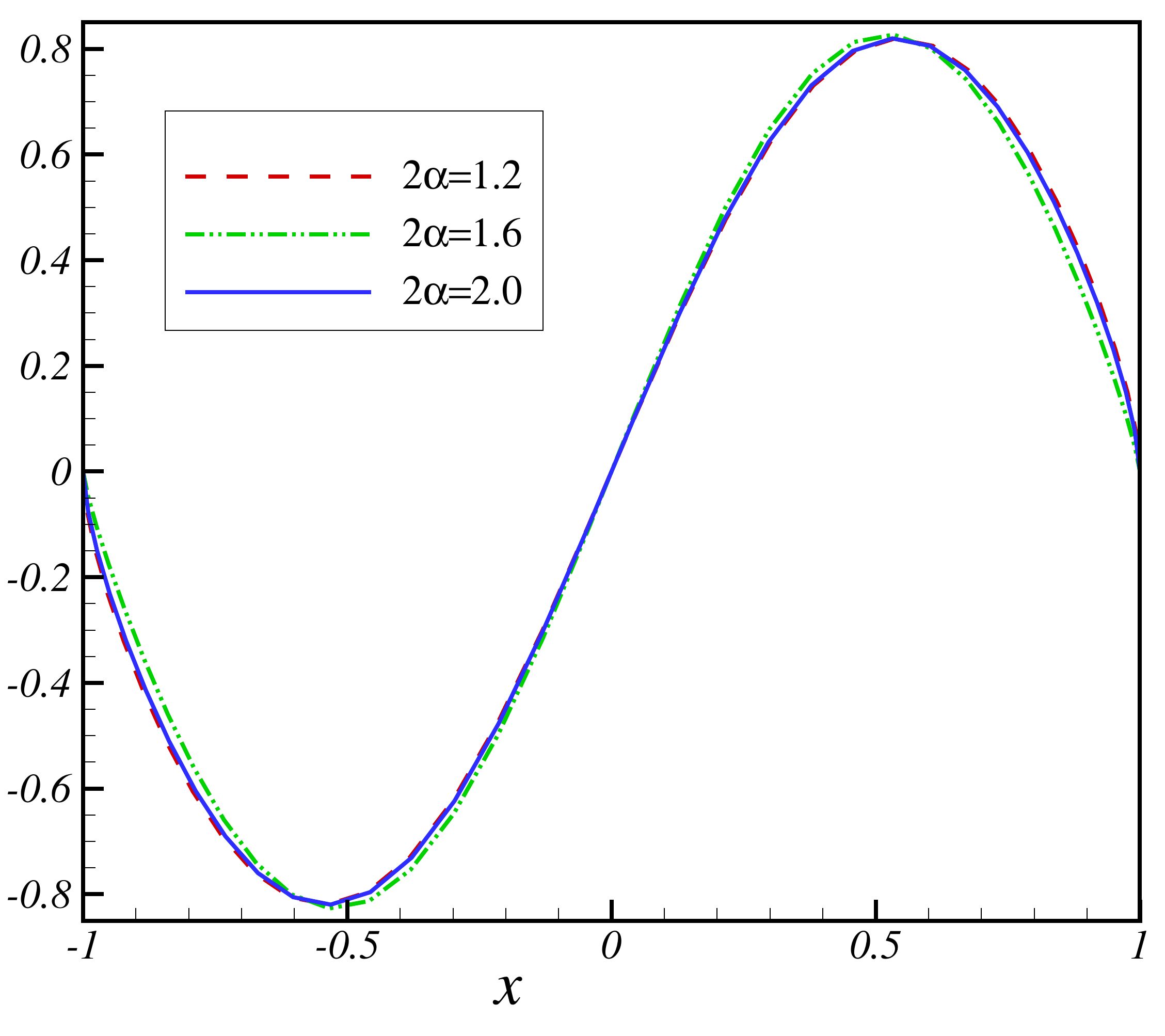}
\includegraphics[width=0.32\textwidth,angle=0]{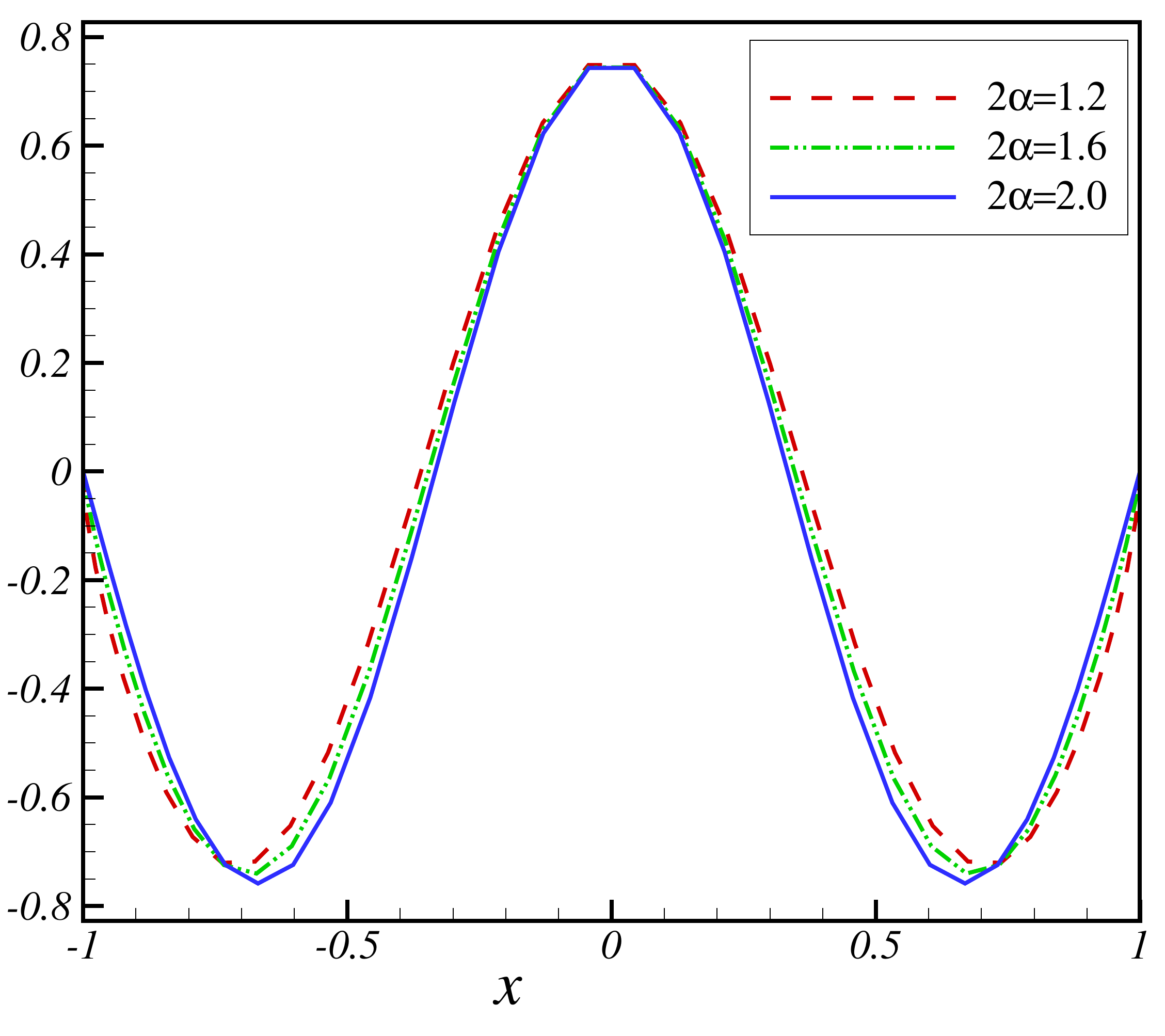}
\end{center}
\caption{The figure of the first three eigenvectors with different fractional
 order $2\alpha=1.2, 1.6, 2.0$ and $N=32$.}
\label{vector}
\end{figure}

\subsection{Weyl-type asymptotic law}

By a fixed computational parameter $N=1024$, we plot
all the eigenvalues with $N=1024$ for different fractional order
$2\alpha=1.2, 1.4, 1.6, 1.8, 2.0$  in Fig.\,\ref{eigallalp} (left). As expected,  it is about $\displaystyle \frac{2}{\pi}\approx 64\%$ of
total eigenvalues that are reliable and obey the Weyl-type asymptotic law. Then, the first 700 eigenvalues and function $n^{2\alpha}$
correspond to  $2\alpha=1.2$ and $2.0$ are  plotted in Fig.\,\ref{eigp}, which clearly shows the eigenvalues linear dependence of $n^{2\alpha}$.

Fig.\,\ref{eigallalp} (right)  presents   the 100-th, 200-th, 300-th, 400-th, 500-th, 600-th  eigenvalues versus $2\alpha=1.1$ to $2.0$ in
semi-logarithm scale. The straight lines indicate
that the valid eigenvalues increase algebraic with respect to the
fractional order parameter  $2\alpha$.

Finally, we plot in Fig.\,\ref{slope} the first 650 values $\rho_n=\displaystyle \frac{\lambda_n}{(\frac{\pi n}{2})^{2\alpha}}$
versus the $n$ for some $2\alpha$.
As we known, the exact eigenvalue $\lambda_n$ of $2\alpha=2$ is equal to $\displaystyle (\frac{\pi n}{2})^2$,
so the corresponding  $\rho_n$ is identical to $1$ which
was checked in Fig.\,\ref{slope}. Also, we can conclude that  $\lambda_n=\displaystyle (\frac{\pi n}{2})^{2\alpha}$
when $n$ is large enough by this figure. In order to better observe the  growth tendency of some leading  eigenvalues, we plot the
$\rho_n$ in the sub-figure of Fig.\,\ref{slope} in logarithm-logarithm scale. It shows that the $\rho_n$ slightly increases to 1
as $n$ increases for each $2\alpha$.

\begin{figure}[h!bt]
\hfill%
\includegraphics[width=0.35\textwidth,angle=0]{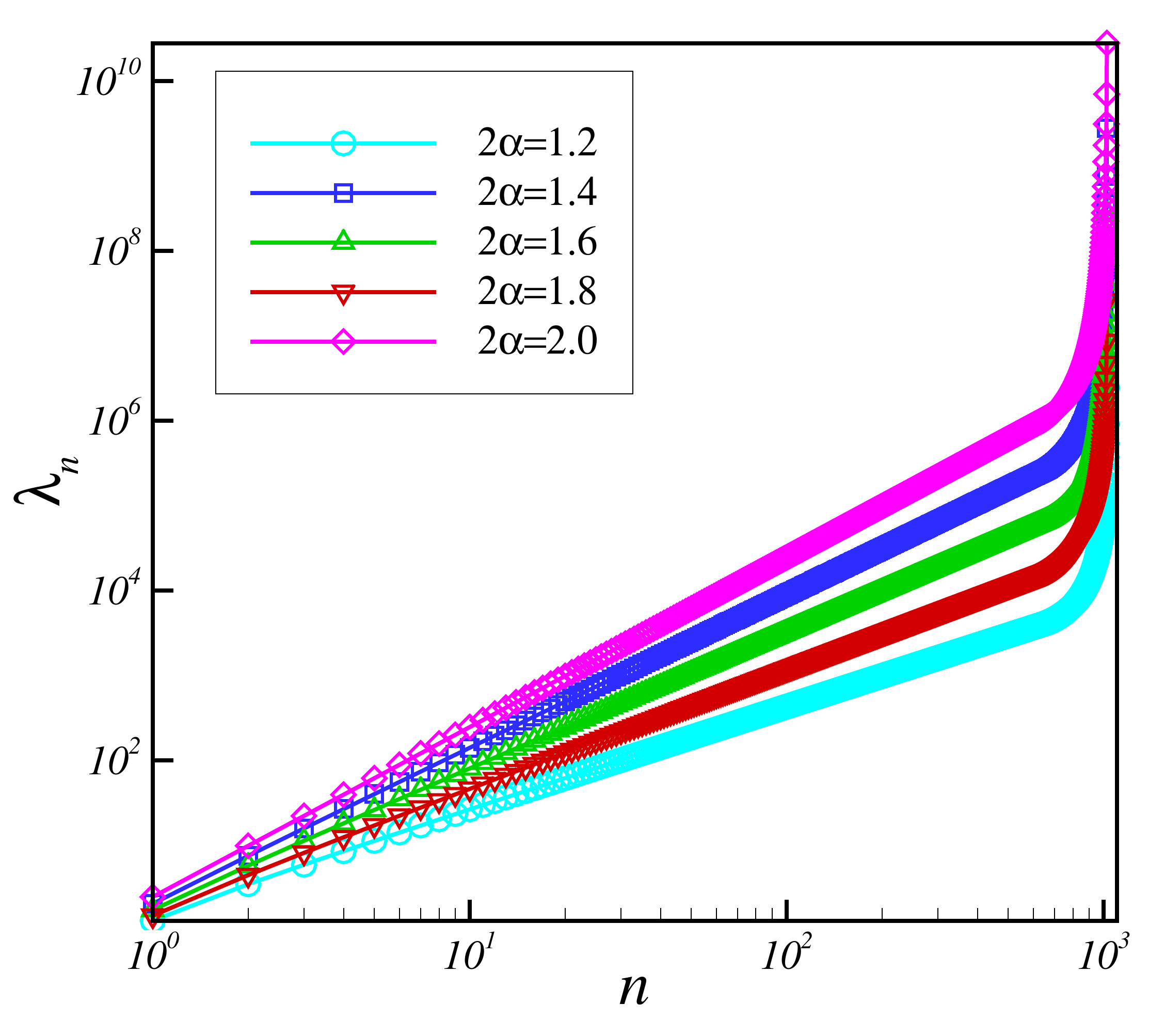}\hfill%
\includegraphics[width=0.35\textwidth,angle=0]{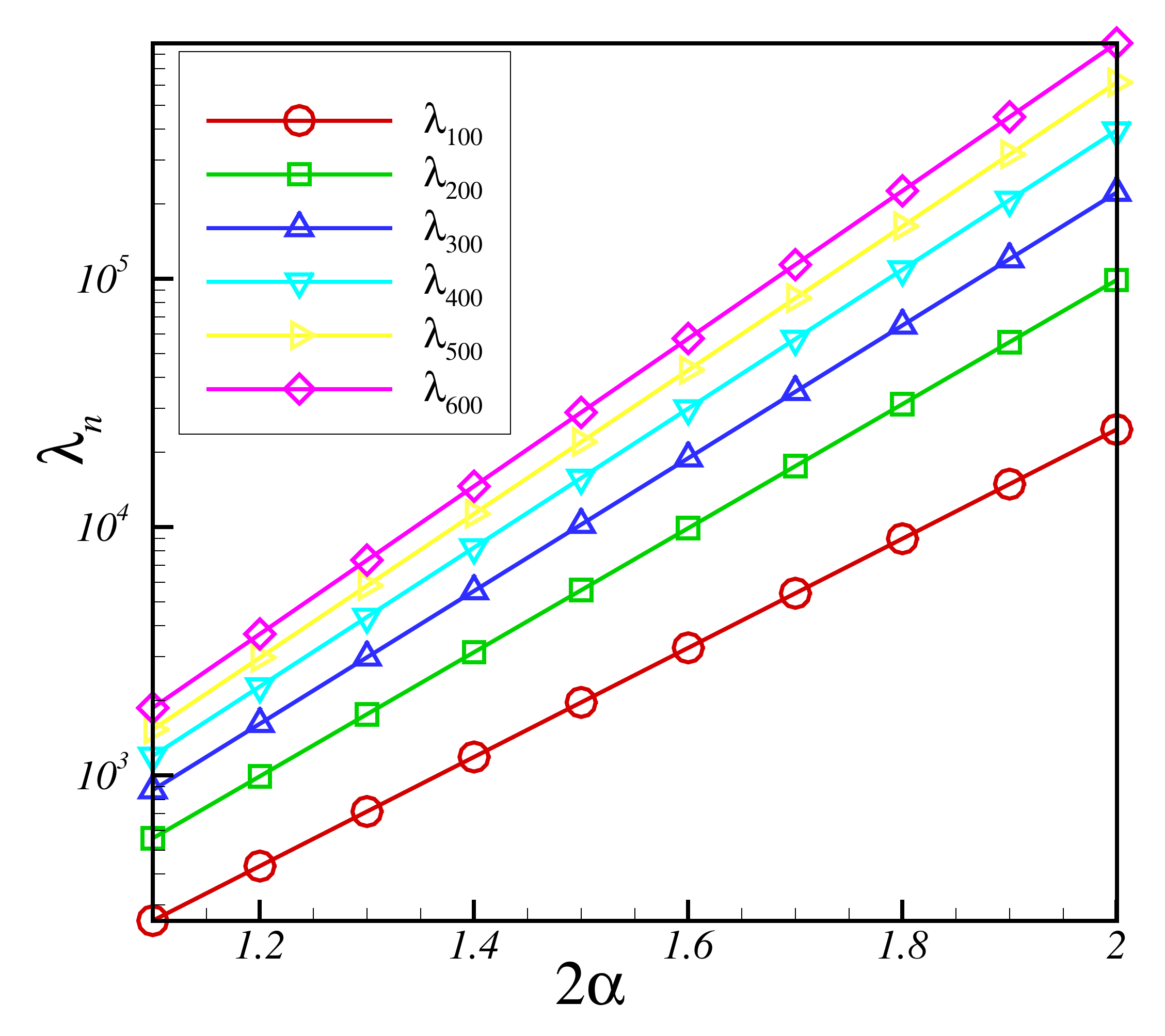}\hspace*{\fill}
\caption{Left: The all eigenvalues with different fractional  order $2\alpha$ from $1.2$ to $2.0$, $N=1024$. Right:
The $n$-th eigenvalue versus different fractional order $2\alpha$ from $1.1$ to $2.0$ in semi-logarithm scale.}
\label{eigallalp}
\end{figure}

\begin{figure}[http]
\hfill
\subfigure[$2\alpha=1.2$]{
\includegraphics[width=0.35\textwidth,angle=0]{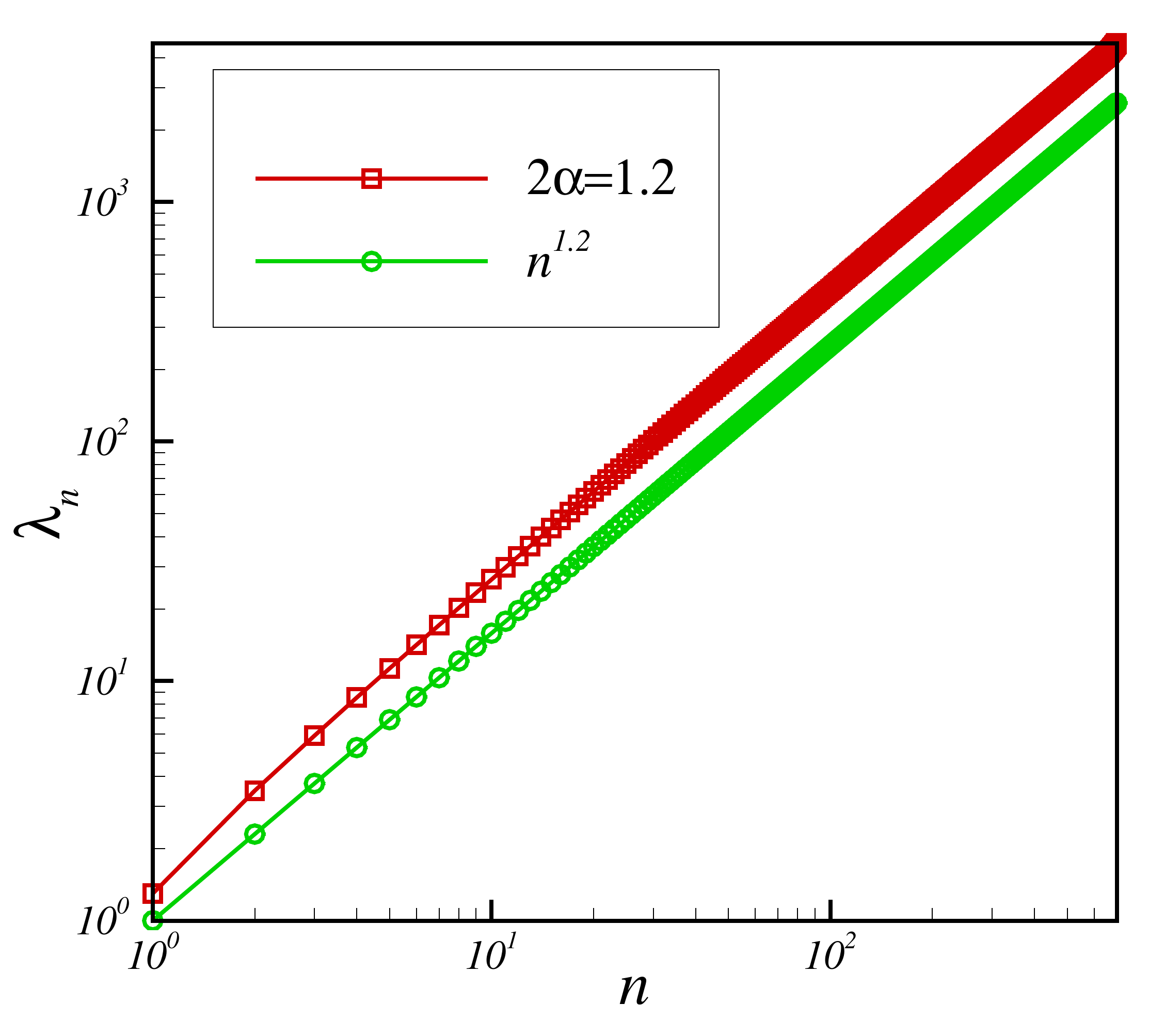}
}\hfill%
\subfigure[$2\alpha=1.8$]{
\includegraphics[width=0.35\textwidth,angle=0]{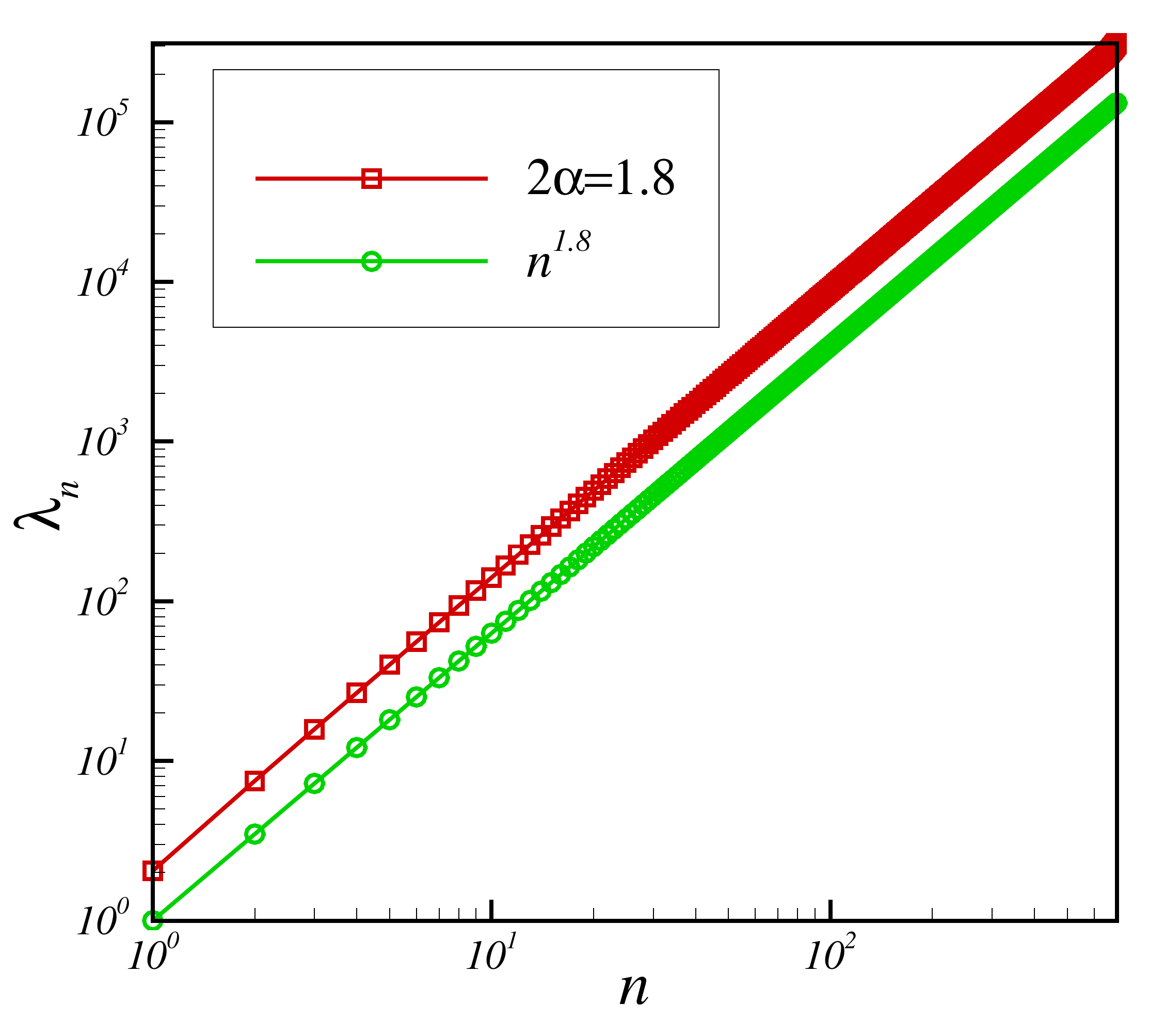}
}\hspace*{\fill}
\caption{The first 700  eigenvalues and function $n^{2\alpha}$ versus $2\alpha=1.2$ and $2.0$, with $N=1024$.}
\label{eigp}
\end{figure}

\begin{figure}[h!bt]
\begin{center}
\includegraphics[width=0.38\textwidth,,angle=0]{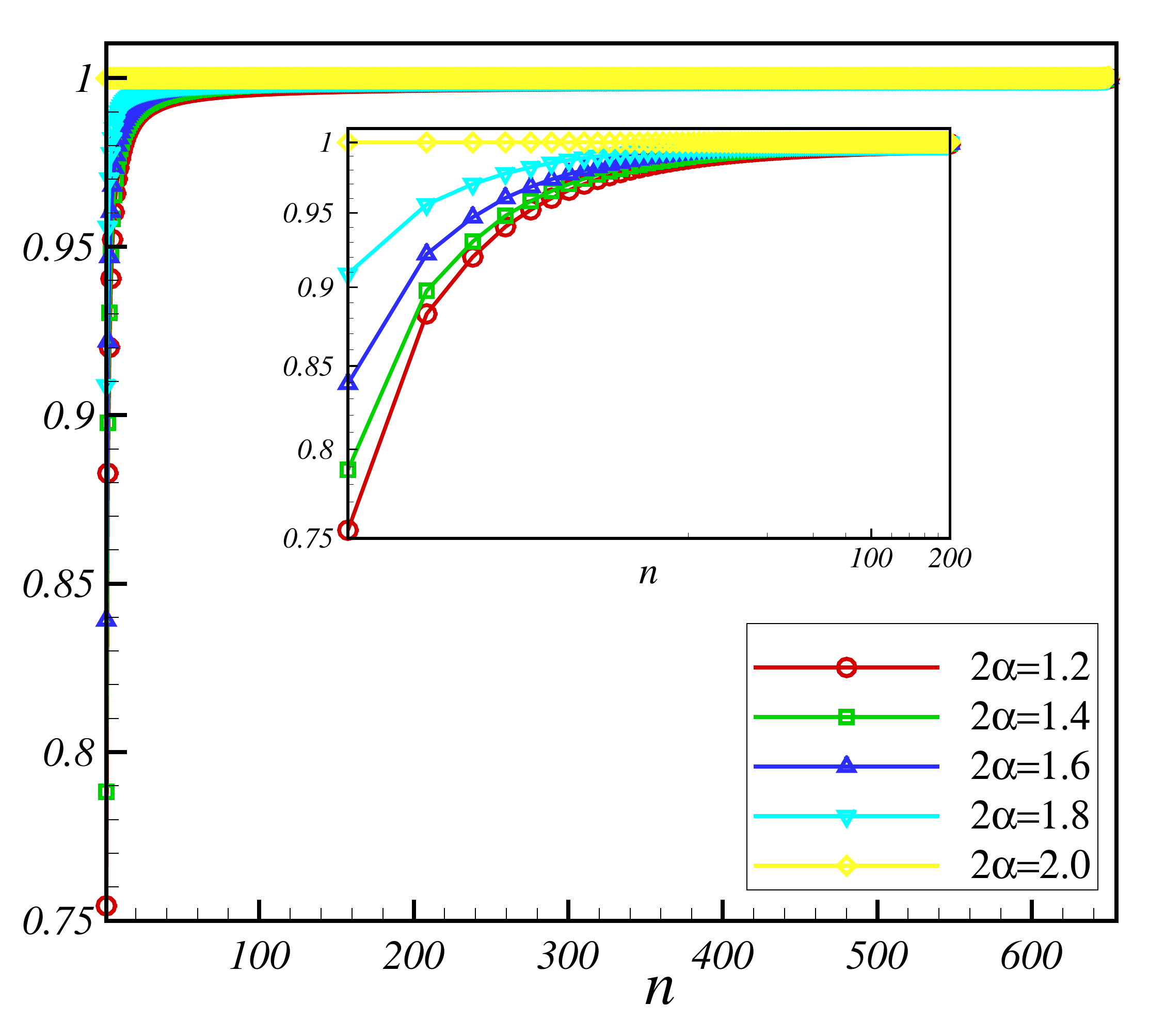}
\end{center}
\caption{The first 650 values $\rho_n=\displaystyle \frac{\lambda_n}{(\frac{\pi n}{2})^{2\alpha}}$ versus  $n$ for different fractional
partial order $2\alpha$  from $1.2$ to $2.0$. The sub-figure plots the first 200 values of $\rho_n$ versus  $n$  in logarithm-logarithm scale.}
\label{slope}
\end{figure}

\subsection{Condition number}

In this subsection, we plot in Fig.\,\ref{cond} the condition number versus polynomial degree $N$ with different fractional order $2\alpha$  from $1.2$ to $2.0$ in logarithm-logarithm scale. The result presented in Fig.\,\ref{cond} shows that, for each $2\alpha$, the condition number $\chi_N$ grows algebraically with respect to $N$. In order to investigate the growth tendency of the condition number numerically, we plot the condition number together with the function $N^{4\alpha}$ in logarithm-logarithm scale with fixed $2\alpha=1.2$ and $1.8$ in the left and right of Fig.\,\ref{condalp} respectively.  The straight lines
are evidence of $\chi_N=\mathcal{O}(N^{4\alpha})$ which was predicted by Theorem \ref{COND}.

\begin{figure}[h!bt]
\begin{center}
\includegraphics[width=0.38\textwidth,angle=0]{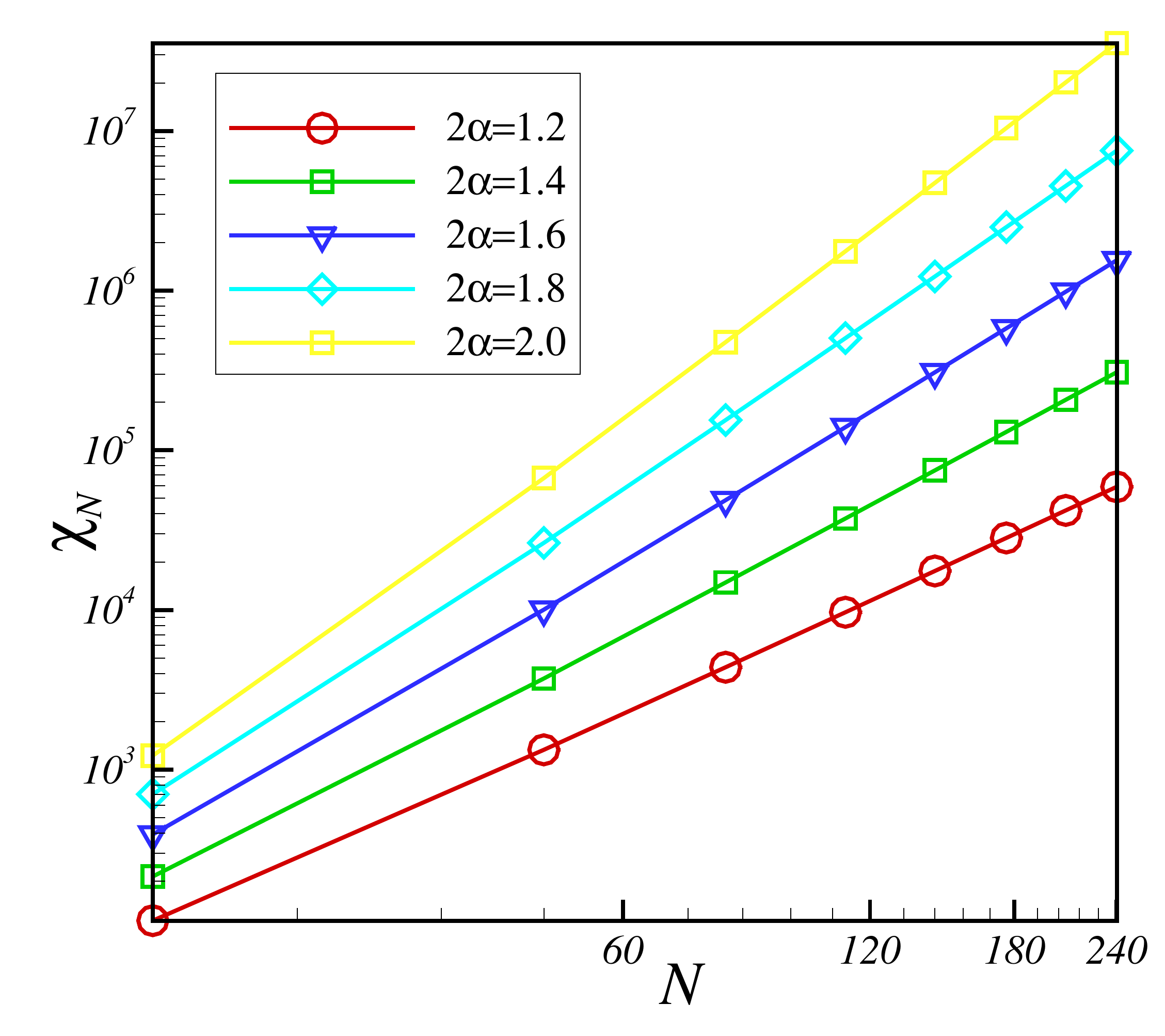}
\end{center}
\caption{The condition number versus the polynomial degree $N$ with different fractional
 order $2\alpha$  from $1.2$ to $2.0$ in logarithm-logarithm scale.}
\label{cond}
\end{figure}
\begin{figure}[http]
\hfill
\subfigure[$2\alpha=1.2$]{
\includegraphics[width=0.35\textwidth,angle=0]{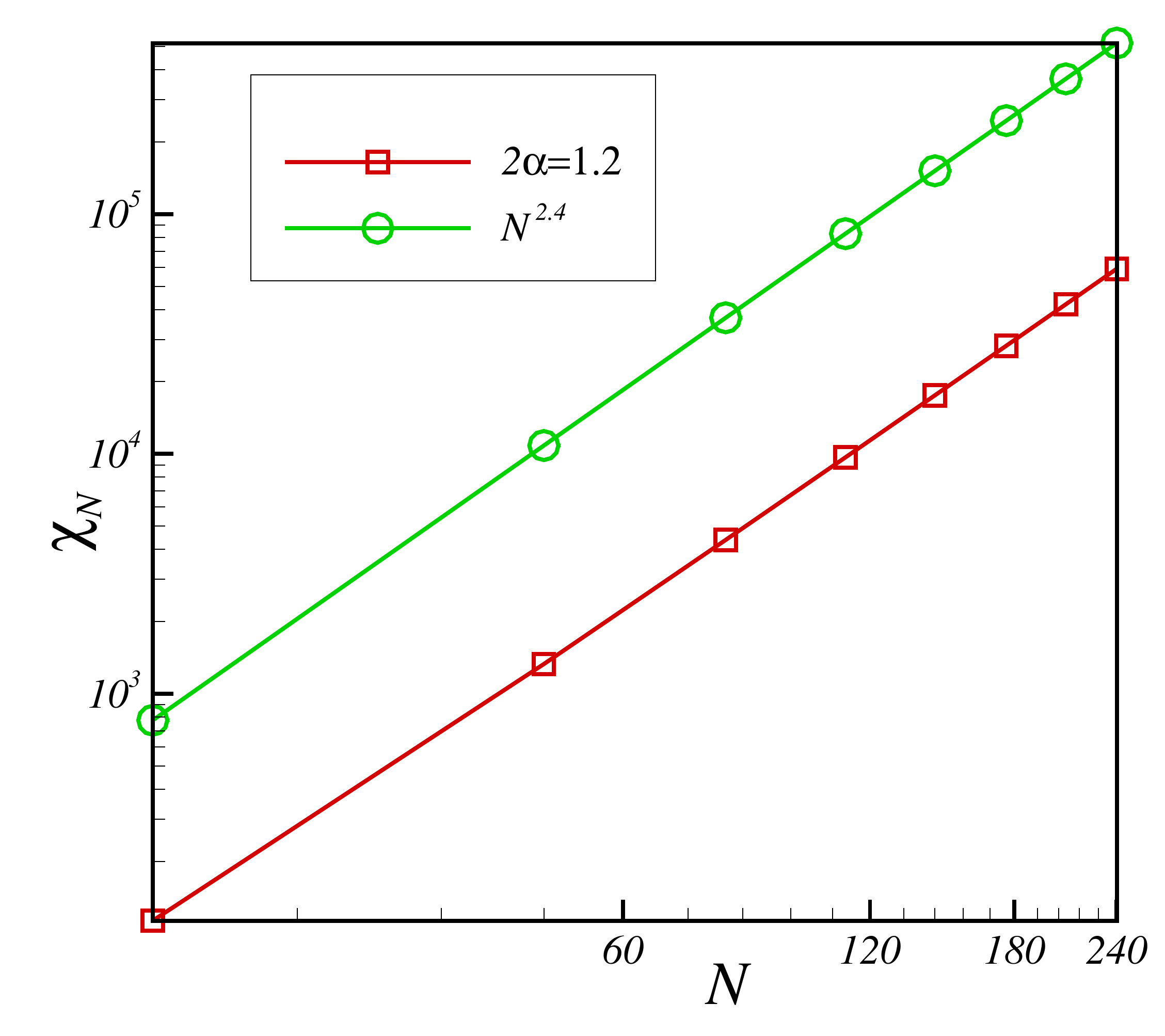}
}\hfill
\subfigure[$2\alpha=1.8$]
{
\includegraphics[width=0.35\textwidth,angle=0]{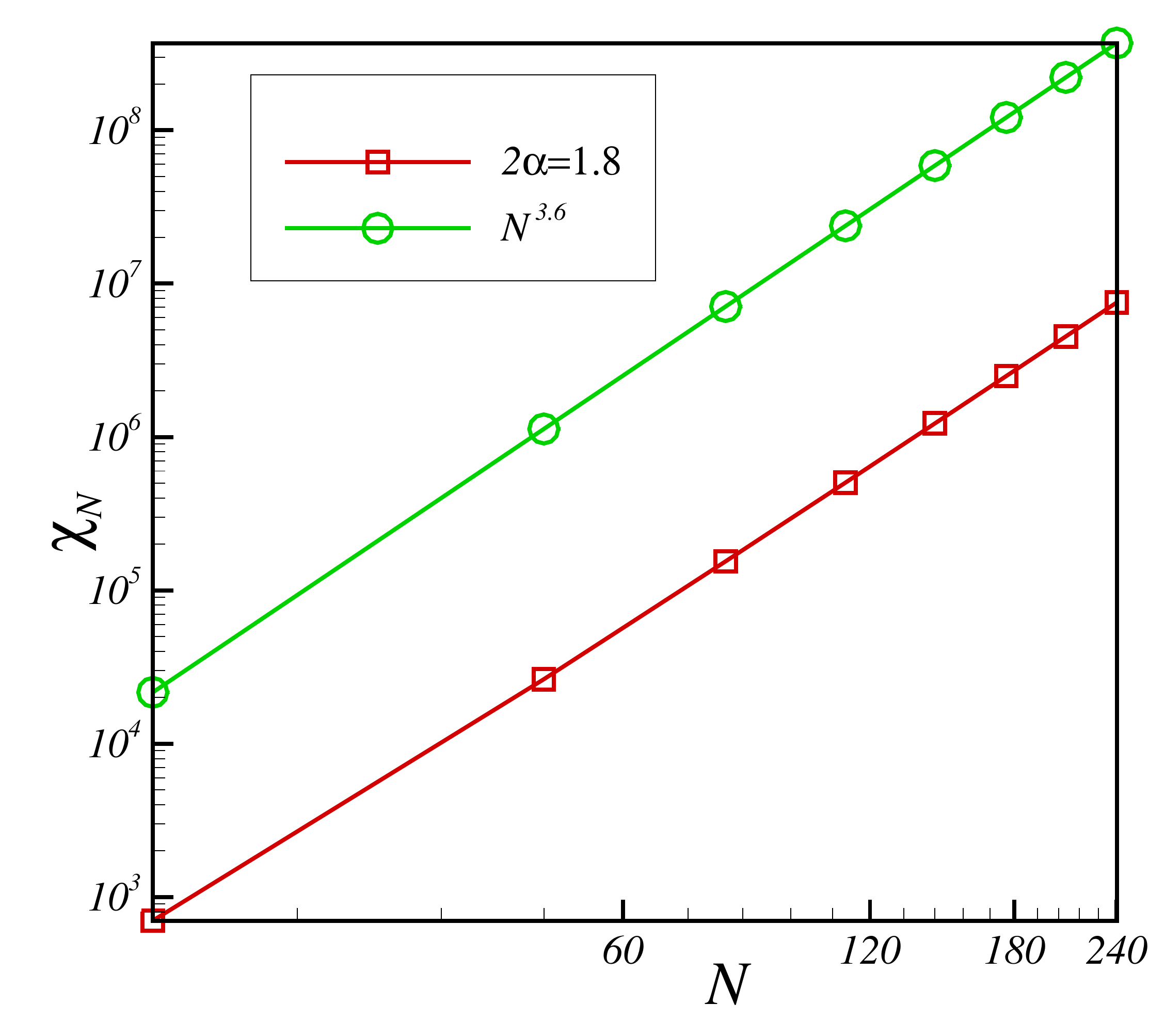}
}\hspace*{\fill}
\caption{The condition number and polynomial function $N^{4\alpha}$ versus the polynomial degree $N$ with different fractional
 order $2\alpha=1.2$ and $2\alpha=1.8$ in logarithm-logarithm scale.}
\label{condalp}
\end{figure}

\begin{example}
Higher fractional derivative order $2\alpha$
\end{example}

In this example, we  repeat the tests of the eigenvalue problems with more higher
fractional derivative order $2\alpha$. We plot the accuracy, eigenvalues and condition number for $2\alpha=3.6$
and $5.6$ in Figs.\,\ref{erroreig1}, \ref{eigacond} (left), \ref{eigacond} (right) and \ref{condalp1} respectively.
This numerical results demonstrate that  the Jacobi-Galerkin spectral method is still accurate and efficient with larger $2\alpha$.

\begin{figure}[h!bt]
\begin{center}
\includegraphics[width=0.32\textwidth,angle=0]{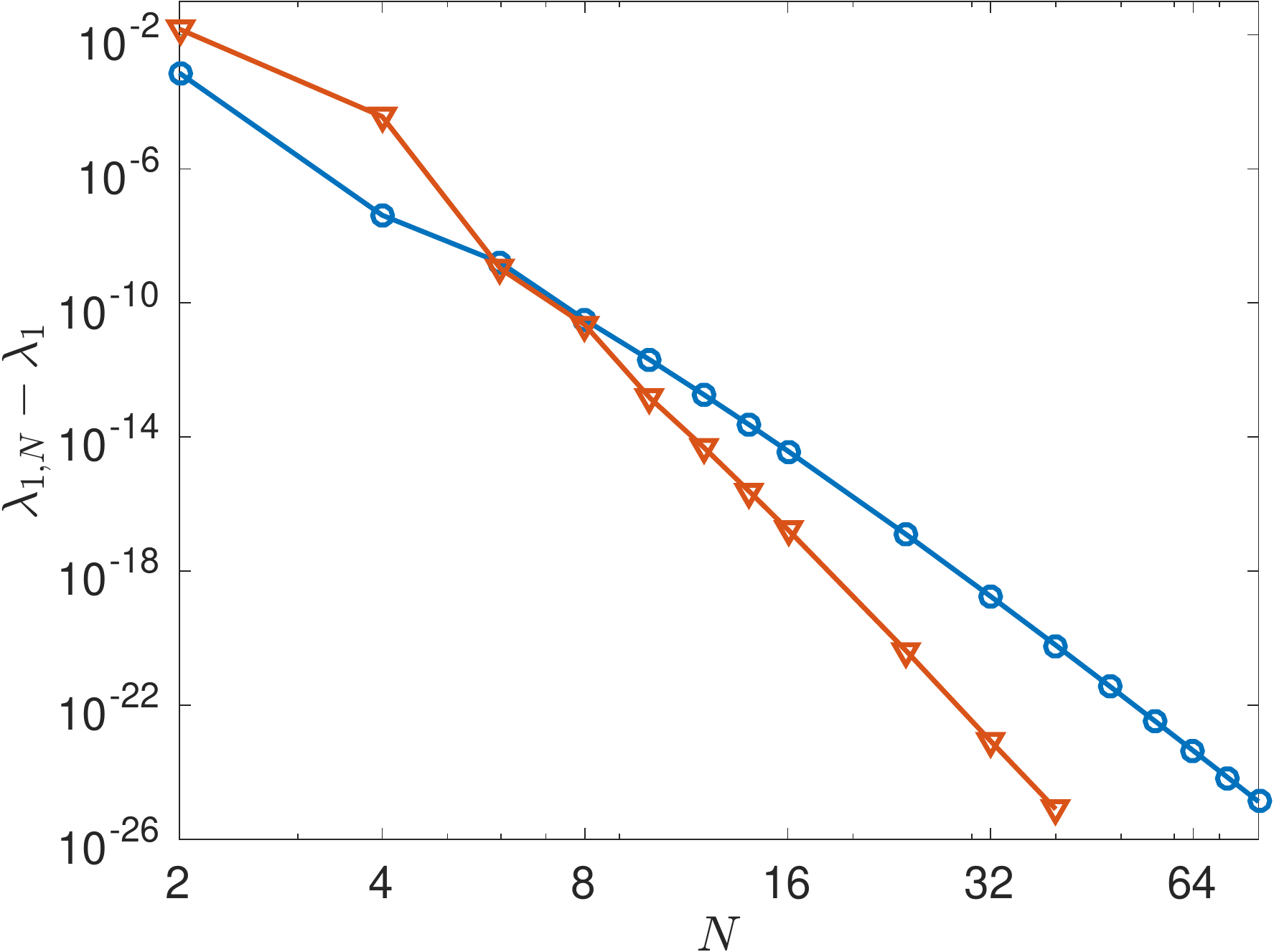}
\includegraphics[width=0.32\textwidth,angle=0]{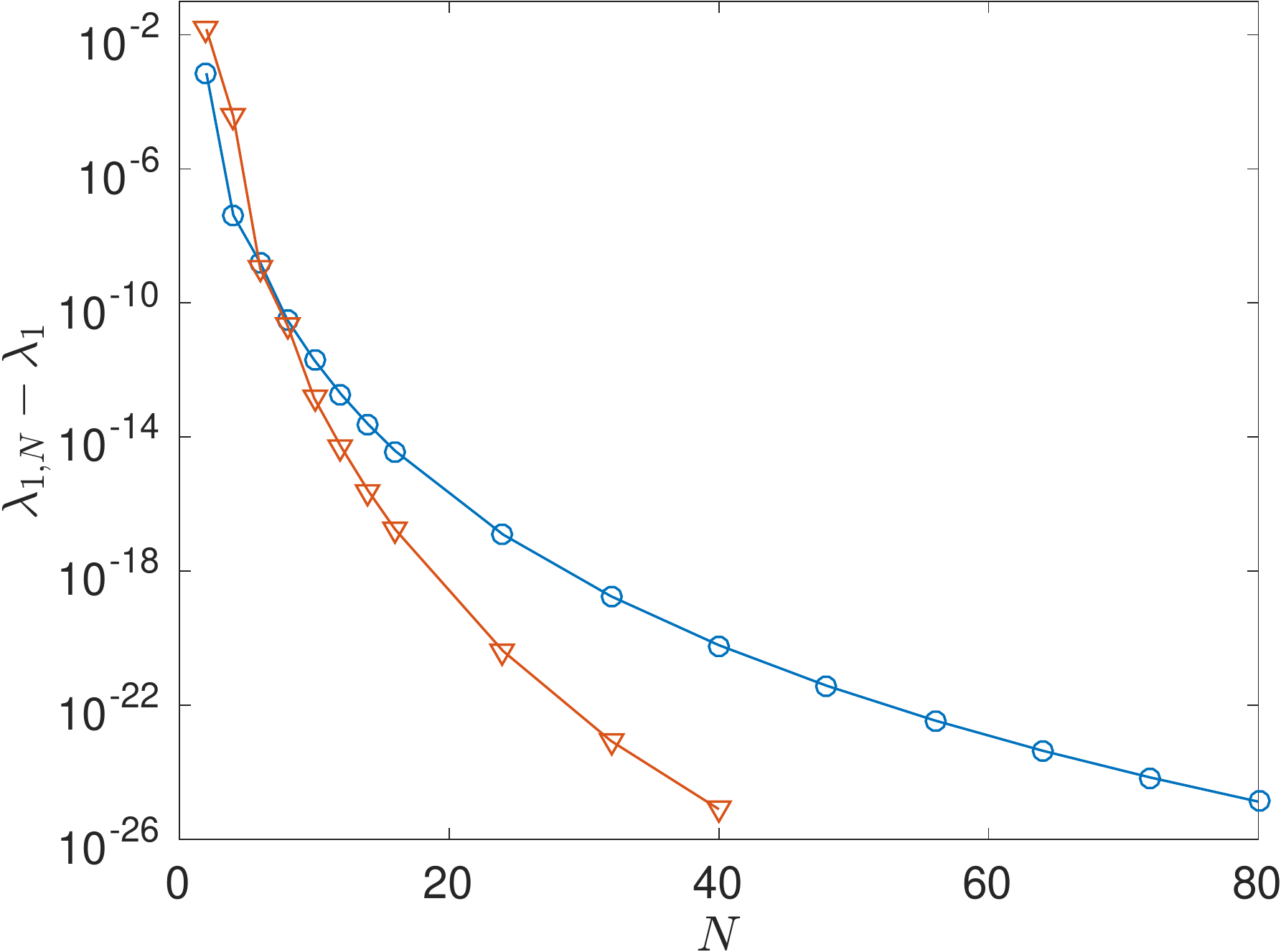}
\includegraphics[width=0.3\textwidth,angle=0]{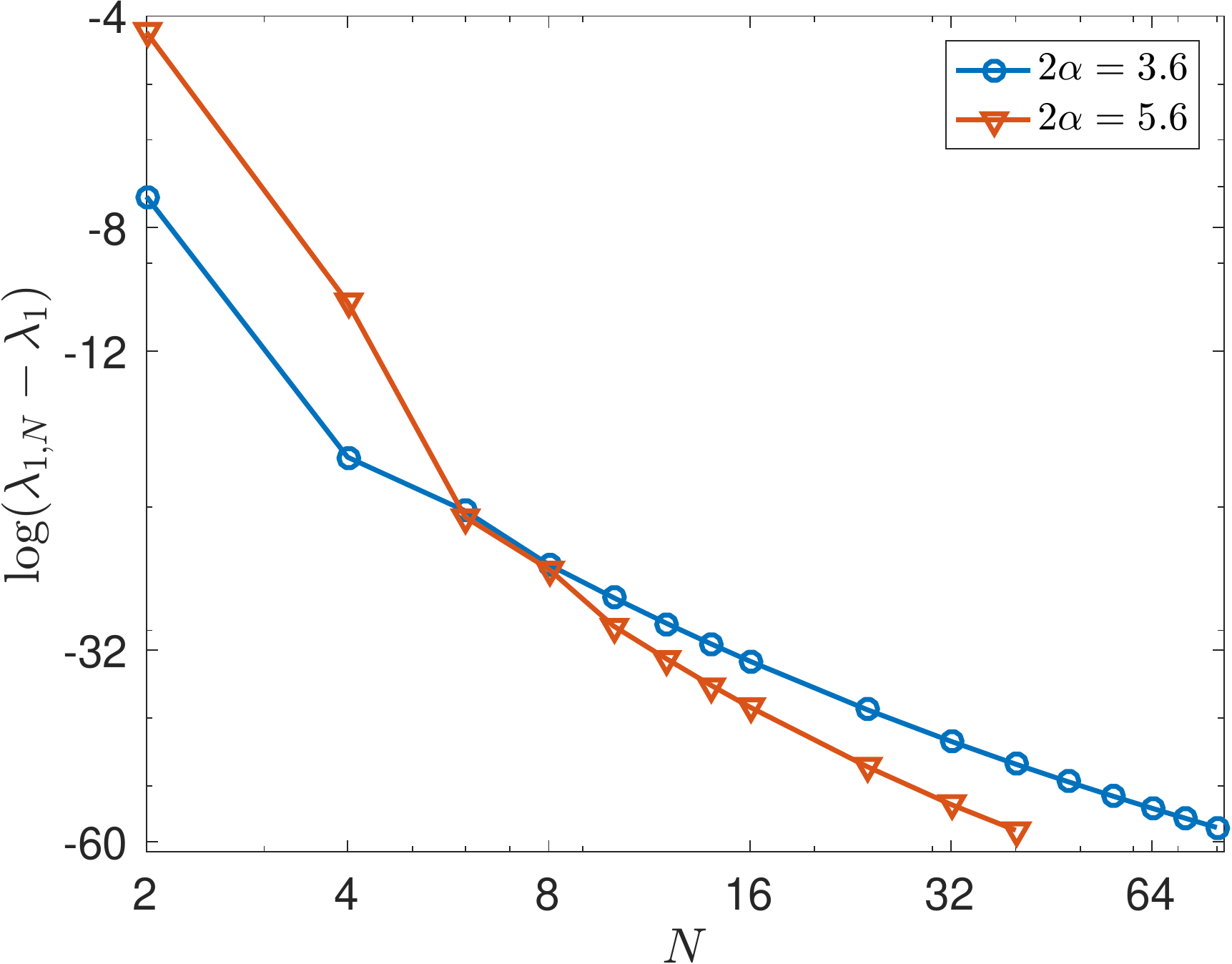}
\end{center}
\caption{Errors of the first eigenvalue versus polynomial degree $N$  with various fractional orders. Left: $\lambda_{1,N}-\lambda_1$ in logarithm-logarithm scale; Center:  $\lambda_{1,N}-\lambda_1$  in semi-logarithm scale; Right: 
$\log(\lambda_{1,N}-\lambda_1)$ in logarithm-logarithm scale.}
\label{erroreig1}
\end{figure}

\begin{figure}[h!bt]
\hfill
\includegraphics[width=0.32\textwidth,angle=0]{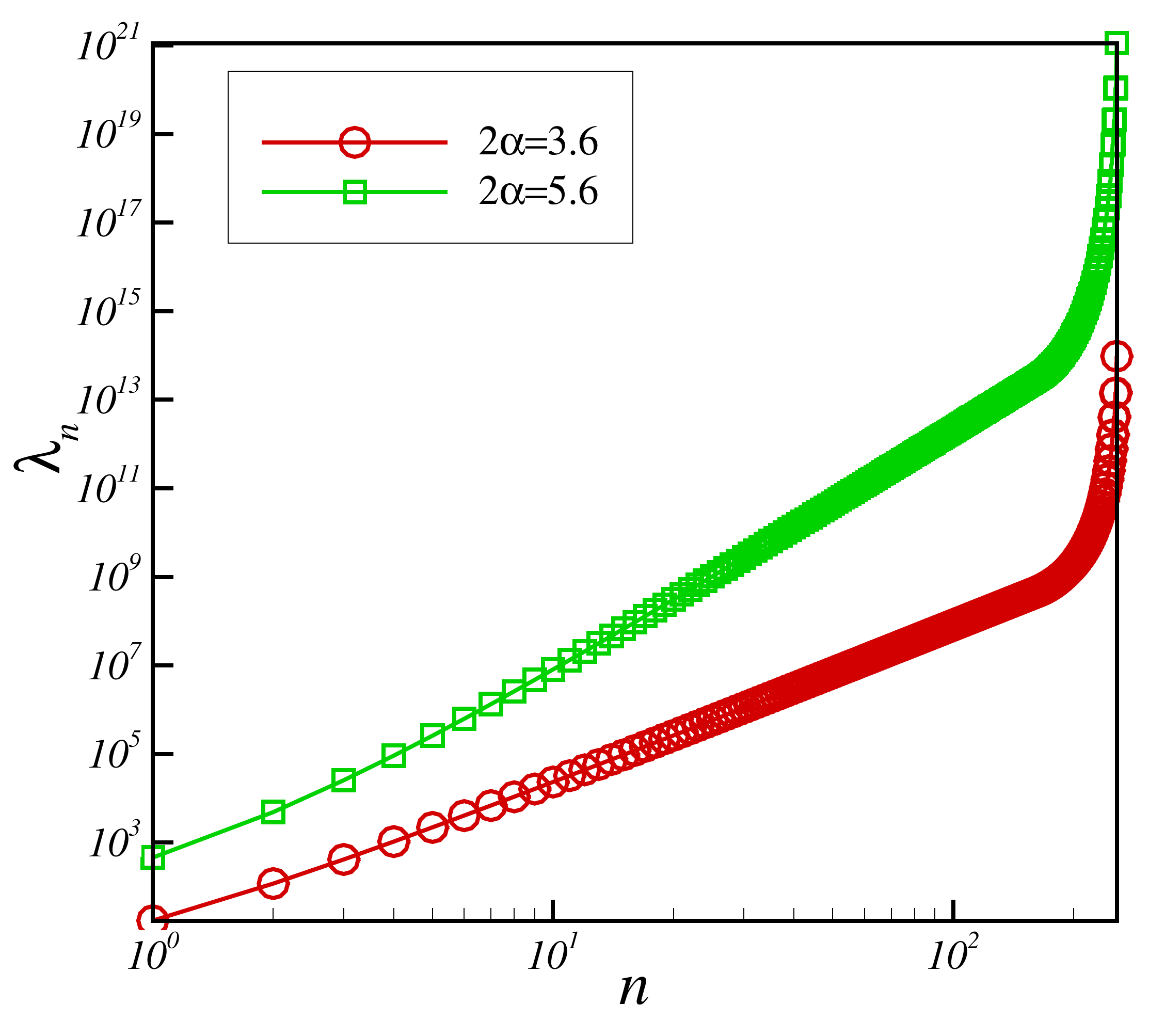}
\hfill
\includegraphics[width=0.32\textwidth,angle=0]{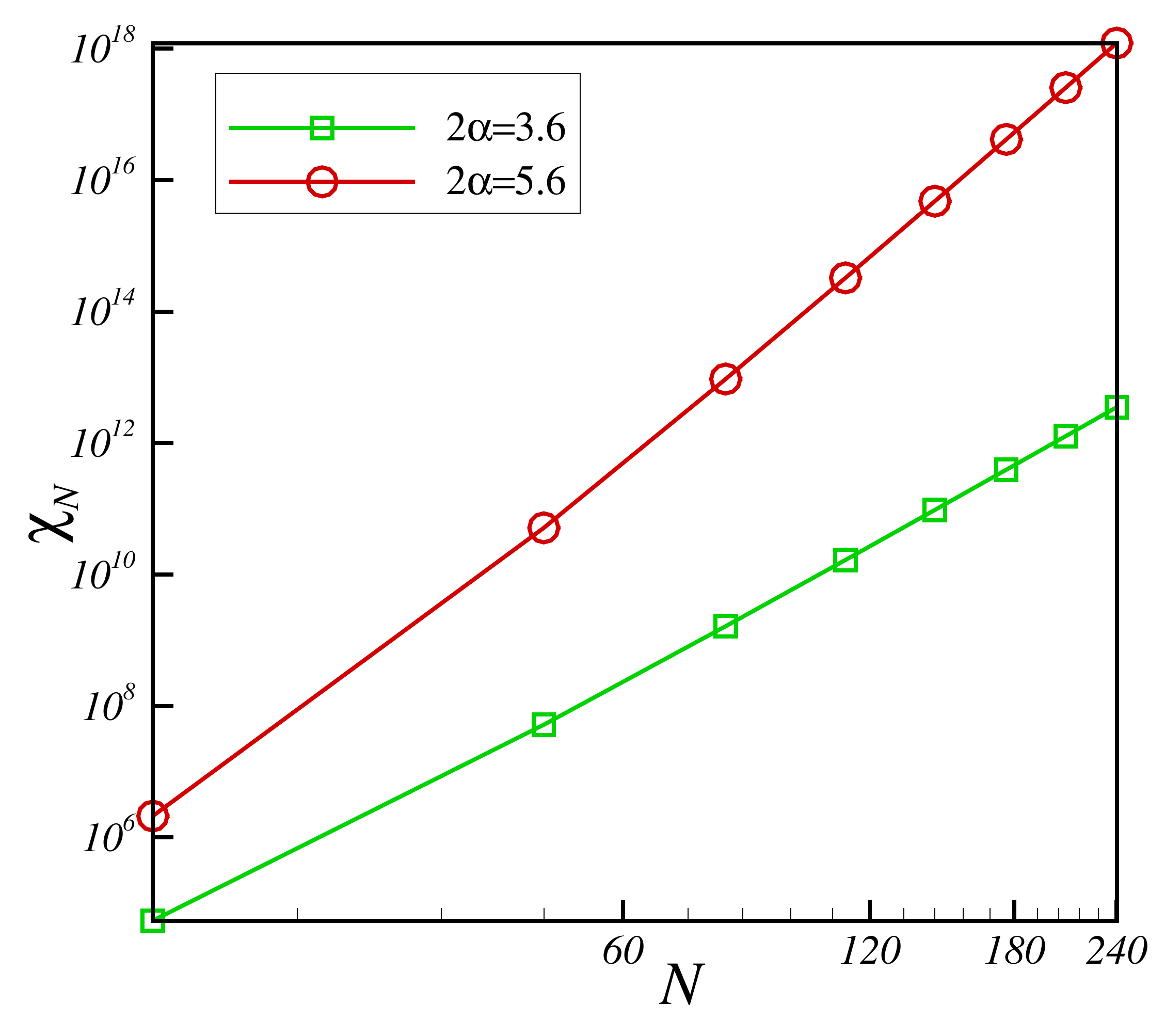}
\hspace*{\fill}
\caption{Left: The all eigenvalues with different fractional  order $2\alpha$ equal to $3.6$ and $5.6$, $N=256$.
Right: The condition number versus the polynomial degree $N$ with different fractional
 order $2\alpha=3.6$ and $2\alpha=5.6$ in logarithm-logarithm scale.}
\label{eigacond}
\end{figure}

\begin{figure}[http]
\hfill
\subfigure[$2\alpha=3.6$]{
\includegraphics[width=0.35\textwidth,angle=0]{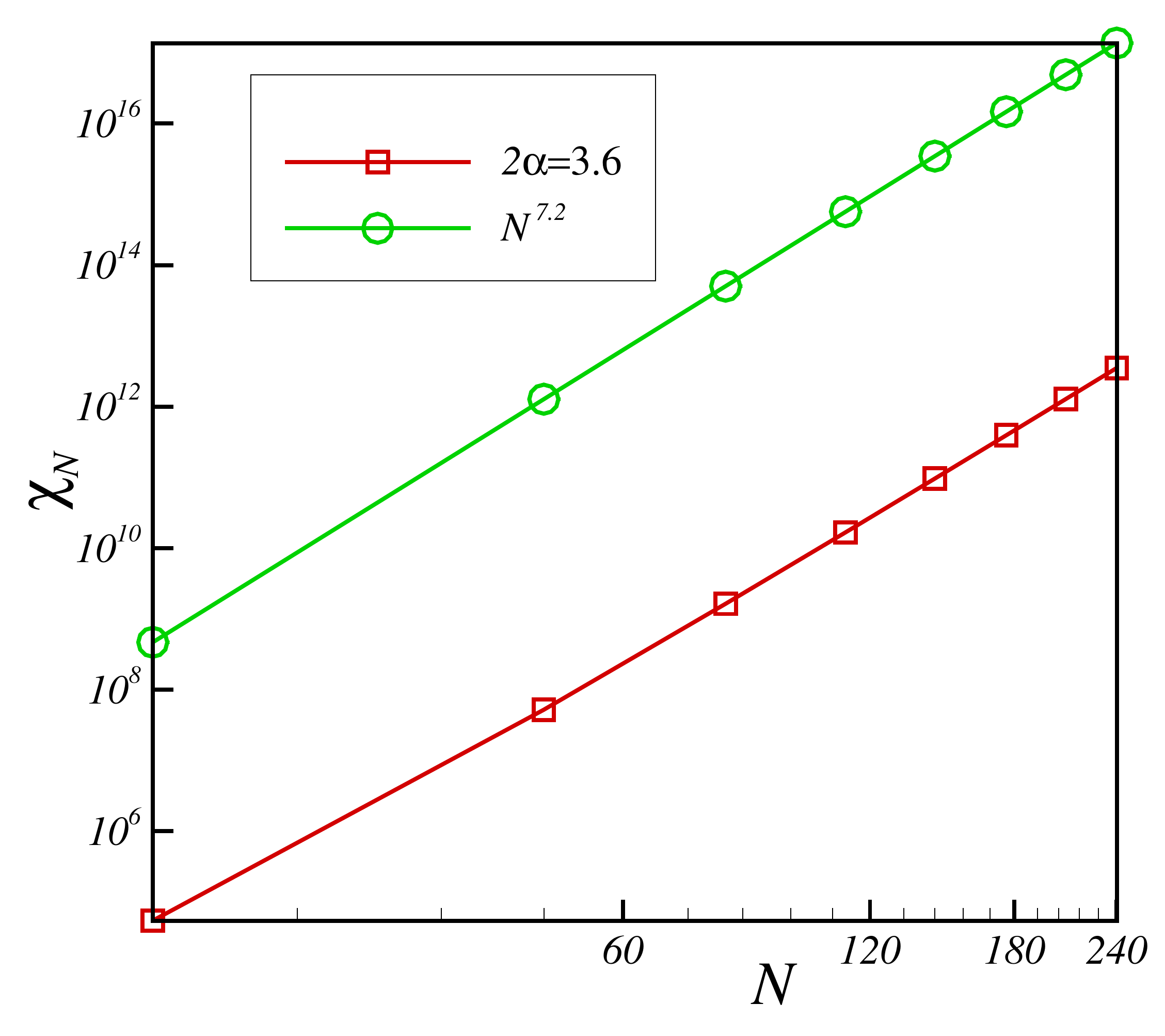}
}\hfill
\subfigure[$2\alpha=5.6$]
{
\includegraphics[width=0.35\textwidth,angle=0]{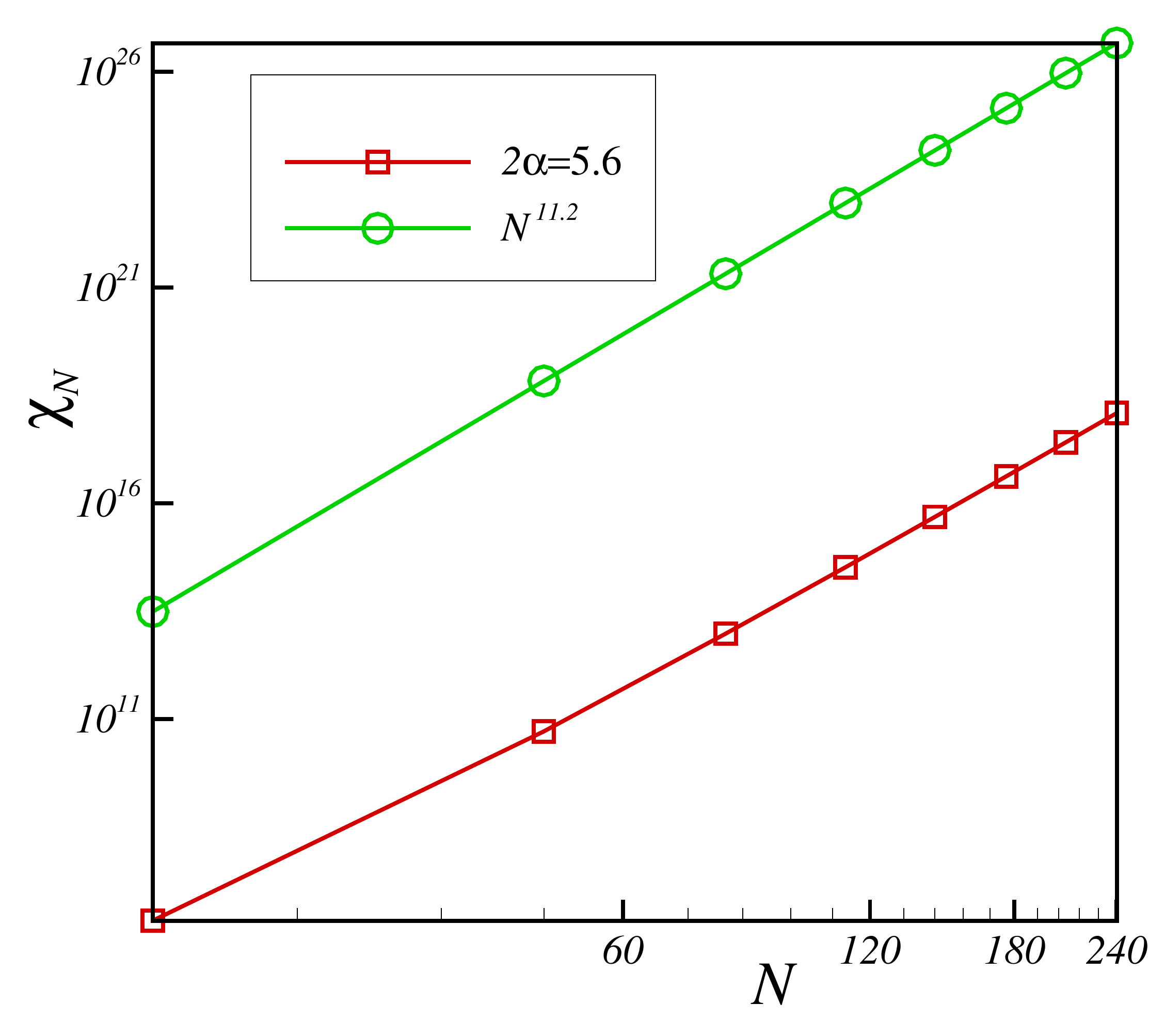}
}\hspace*{\fill}
\caption{The condition number and polynomial function $N^{4\alpha}$ versus the polynomial degree $N$ with different fractional
 order $2\alpha=3.6$ and $2\alpha=5.6$ in logarithm-logarithm scale.}
\label{condalp1}
\end{figure}

\section{Conclusion}
\label{sec:finale}
In this paper, we have proposed an efficient Jacobi-Galerkin spectral method for the
Riesz fractional differential eigenvalue problems. The error estimate for the
eigenvalues  was derived by the compact spectral theory. Numerical
results confirm  the  asymptotically exponential convergence rate of the eigenvalues. Also
the eigenvalues with different fractional partial differential order were calculated.
The figures demonstrate that the eigenvalues behavior as $n^{2\alpha}$ which obey the
Weyl-type asymptotic law.
In the future, we will consider the fractional differential eigenvalue problems
in two dimensional case and more complex computational domain.


\bibliographystyle{plain}
\bibliography{ref}

\end{document}